\theoremstyle{plain}
\newtheorem{theo}{Theorem}[section]
\crefname{theo}{Theorem}{Theorems}
\Crefname{theo}{Theorem}{Theorems}
\newtheorem{prop}[theo]{Proposition}
\crefname{prop}{Proposition}{Propositions}
\Crefname{prop}{Proposition}{Propositions}
\newtheorem{lem}[theo]{Lemma}
\crefname{lem}{Lemma}{Lemmas}
\Crefname{lem}{Lemma}{Lemmas}
\newtheorem{cor}[theo]{Corollary}
\crefname{cor}{Corollary}{Corollaries}
\Crefname{cor}{Corollary}{Corollaries}
\crefname{claim}{Claim}{Claims}
\Crefname{claim}{Claim}{Claims}
\crefname{property}{Property}{Properties}
\Crefname{property}{Property}{Properties}
\crefname{problem}{Problem}{Problems}
\Crefname{problem}{Problem}{Problems}
\theoremstyle{definition}
\newtheorem{defi}[theo]{Definition}
\crefname{defi}{Definition}{Definitions}
\Crefname{defi}{Definition}{Definitions}
\crefname{notation}{Notation}{Notations}
\Crefname{notation}{Notation}{Notations}
\crefname{convention}{Convention}{Conventions}
\Crefname{convention}{Convention}{Conventions}
\crefname{cond}{Condition}{Conditions}
\Crefname{cond}{Condition}{Conditions}
\newtheorem{conj}[theo]{Conjecture}
\crefname{conj}{Conjecture}{Conjecture}
\Crefname{conj}{Conjecture}{Conjecture}
\crefname{assum}{Assumption}{Assumptions}
\Crefname{assum}{Assumption}{Assumptions}
\theoremstyle{remark}
\newtheorem{rem}[theo]{Remark}
\crefname{rem}{Remark}{Remarks}
\Crefname{rem}{Remark}{Remarks}
\crefname{ex}{Example}{Examples}
\Crefname{ex}{Example}{Examples}
\crefname{section}{Section}{Sections}
\Crefname{section}{Section}{Sections}
\crefname{subsection}{Subsection}{Subsections}
\Crefname{subsection}{Subsection}{Subsections}
\crefname{figure}{Figure}{Figures}
\Crefname{figure}{Figure}{Figures}
\newcommand{\Z}{\mathbb{Z}}
\newcommand{\R}{\mathbb{R}}
\newcommand{\C}{\mathbb{C}}
\renewcommand{\H}{\mathbb{H}}
\newcommand{\Q}{\mathbb{Q}}
\newcommand{\CP}{\mathbb{CP}}
\newcommand{\sign}{\mathrm{sign}}
\newcommand{\calB}{{\mathcal B}}
\newcommand{\fraks}{\mathfrak{s}}
\newcommand{\ind}{\mathop{\mathrm{ind}}\nolimits}
\newcommand{\Ind}{\mathop{\mathrm{Ind}}\nolimits}
\newcommand{\Spinc}{spin$^{c}$ }
\newcommand{\UU}{\operatorname{\rm U}}
\newcommand{\BF}{\mathrm{BF}}
\newcommand{\SW}{\mathrm{SW}}
\newcommand{\SWF}{\mathrm{SWF}}
\newcommand{\BFdim}{\dim_{\mathrm{BF}}}
\numberwithin{equation}{section}
\subjclass[2020]{Primary 57K41, Secondary 	55Q55}
\date{\today}
\title[Notions of simple type for Bauer--Furuta invariants]{Notions of simple type for Bauer--Furuta invariants}
\author{Tsuyoshi Kato}
\address{Department of Mathematics, Kyoto University, Kyoto, 606-8502, Japan}
\email{tkato@math.kyoto-u.ac.jp}
\author{Daisuke Kishimoto}
\address{Faculty of Mathematics, Kyushu University, Fukuoka 819-0395, Japan}
\email{kishimoto@math.kyushu-u.ac.jp}
\author{Nobuhiro Nakamura}
\address{Center for Integrated Sciences and Humanities, Fukushima Medical University, 
1 Hikariga-oka, Fukushima City 960-1295, Japan}
\email{nnaka@fmu.ac.jp}
\author{Kouichi Yasui}
\address{Department of Pure and Applied Mathematics, Graduate School of Information Science and Technology, The University of Osaka, 1-5 Yamadaoka, Suita, Osaka 565-0871, Japan}
\email{kyasui@ist.osaka-u.ac.jp}
\begin{document}

\maketitle

\begin{abstract}

By extending the notion of simple type for the Seiberg--Witten invariant of a 4-manifold, we introduce notions of BF blowup simple type and BF homogeneous type for the Bauer--Furuta invariant and study their applications. Specifically, we show that the existence of an immersed 2-sphere with a certain condition guarantees BF blowup simple type. As an application, we determine the Bauer--Furuta invariant of a 4-manifold obtained by a logarithmic transformation along a torus in a fishtail neighborhood. We also give constraints on gluing decompositions of 4-manifolds by using BF homogeneous type.  To prove these results, we also give gluing formulae and an immersed adjunction inequality for Bauer--Furuta invariants. 
\end{abstract}

\section{Introduction}\label{sec:introduction}

The Seiberg-Witten invariants \cite{Witten} of smooth $4$-manifolds have played a pivotal role in the study of $4$-manifolds over the past thirty years, yielding numerous significant applications to low-dimensional topology.
The simple type conjecture, posed in 1990s, states a fundamental constraint on the invariant (e.g. \cite[\S2.3.3]{Nicolaescu}).

\begin{conj}[Simple type conjecture]
 Every closed, connected, oriented, smooth
4-manifold with $b_2^+ >1$ has Seiberg--Witten simple type.
\end{conj}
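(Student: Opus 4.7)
This is a notoriously open conjecture, so any plan is necessarily speculative; what follows is the route I would try in light of the Bauer--Furuta machinery the paper develops. The plan is to work not with the numerical Seiberg--Witten invariant directly, but with its stable $\Pin(2)$-equivariant refinement, the Bauer--Furuta class $\BF(X,\fraks)$. The conjecture asserts vanishing of $\SW(X,\fraks)$ whenever the formal dimension $d(\fraks) = (c_1(\fraks)^2 - 2\chi(X) - 3\sigma(X))/4$ is strictly positive, and the first step would be to translate this into a statement about $\BF(X,\fraks)$ lying in a proper equivariant skeleton, so that the numerical invariant, obtained by capping with $\mu$-classes, is forced to zero.

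Second, I would try to exploit universal geometric operations that alter $\BF$ in controlled ways without changing the underlying smooth manifold, and use them to constrain any putative contribution in positive formal dimension. Natural ingredients are the immersed adjunction inequality and the blowup formula referenced in the abstract; together with the gluing formulae developed in the paper, they provide the rigidity one needs to argue that $\BF(X,\fraks)$ in positive formal dimension factors through something trivial. Concretely, for any $X$ one would like to exhibit an embedded or immersed surface, or a decomposition $X = X_1 \cup_Y X_2$, that via BF homogeneous type reduces the positive-dimensional piece of $\BF(X,\fraks)$ to a manifestly vanishing contribution. A complementary tactic is to view SW simple type as a degree concentration statement, then push the question to the equivariant stable stems where $\Pin(2)$-theory (Furuta-type $10/8$ arguments, Manolescu's equivariant refinements) provides additional vanishing.

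The hard part, and the reason the conjecture has resisted attack for nearly three decades, is that no known structural mechanism rules out higher-dimensional contributions uniformly across all smooth $4$-manifolds. Symplectic manifolds are handled by Taubes's SW $=$ Gr, which embeds the problem in pseudoholomorphic geometry, but a general $X$ carries no such auxiliary structure. Any BF-based approach must therefore supply a homotopy-theoretic substitute for the symplectic form: a universal $\Pin(2)$-equivariant vanishing principle, or a reason why the only possible stable maps in positive formal dimension are equivariantly null. I expect this to be the principal obstacle, and I would not expect any of the tools currently available, including those introduced in this paper, to settle it in full generality; rather, the realistic goal is to enlarge the class of $4$-manifolds for which simple type can be verified, which is precisely the program the rest of this paper undertakes.
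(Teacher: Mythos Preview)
The statement in question is a \emph{conjecture}, not a theorem, and the paper does not attempt to prove it; it is stated as motivation for introducing the notions of BF blowup simple type and BF homogeneous type. So there is no ``paper's own proof'' to compare against. You correctly recognize this, and your proposal is not a proof but a candid survey of possible strategies together with an honest assessment that none of them is likely to succeed in full generality. That assessment is accurate and aligns with the paper's own stance: the paper enlarges the class of $4$-manifolds known to satisfy simple type (via embedded surfaces, immersed spheres, connected sums with symplectic pieces, etc.) rather than attacking the conjecture head-on.

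One small correction: the Bauer--Furuta invariant as used in this paper is $\UU(1)$-equivariant, not $\Pin(2)$-equivariant; the $\Pin(2)$ refinement only appears for spin structures and is not invoked here. More substantively, your speculative program---reduce positive-dimensional $\BF$ contributions via gluing, blowup, and adjunction---is exactly the toolkit the paper builds, but the paper's own results (e.g.\ Theorem~\ref{Y:claim:simple type conjecture}) show that even with these tools one currently only reaches mod~$2$ simple type under additional hypotheses on cup products in $H^1$. The obstruction you identify, the absence of a universal geometric or homotopy-theoretic substitute for the symplectic form, is precisely the gap, and nothing in your outline closes it.
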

Here a closed, connected, oriented, smooth 4-manifold $X$ with $b_2^+ >1$ is called of
Seiberg--Witten simple type if the (integer valued) Seiberg--Witten invariant $\SW(X,\fraks)$ of a \Spinc structure $\fraks$ on $X$ is zero whenever the virtual dimension $d(\fraks)$ of the Seiberg--Witten moduli space for $\fraks$ 
is non-zero.

The Bauer--Furuta invariant \cite{BF,BF2,B_survey} is a stable cohomotopy refinement of the Seiberg--Witten invariant.
Bauer \cite{BF2} proved that the Bauer--Furuta invariant is  strictly stronger than the Seiberg--Witten invariant by
showing a class of 4-manifolds, e.g., connected-sums of several $K3$ surfaces, has the
property that the Seiberg--Witten invariant is trivial but the Bauer--Furuta invariant is not.

In this paper, we introduce two notions, BF blowup simple type and BF homogeneous type,   for the Bauer--Furuta invariant, extending the notion of  Seiberg--Witten simple type.
The former notion extends a property of a $4$-manifold of Seiberg-Witten simple type with regard to the blowup formula.
The latter notion naturally extends the definition of Seiberg--Witten simple type in terms of virtual dimensions of the moduli spaces.
Furthermore, we give their applications to $4$-dimensional topology.


\subsection{Generalized blowup formula}\label{subsec:blowup} 
In order to define the notion of blowup simple type, we present a generalized form of blowup formula for Bauer--Furuta invariants. 
Throughout the paper, we assume every manifold is  connected, oriented and smooth unless stated otherwise.

Let $Y$ be a closed  $3$-manifold with $b_1(Y)=0$ which admits a positive scalar curvature metric.
Let $X$, $N_1$, $N_2$ be  compact oriented $4$-manifolds such that $\partial X=Y=-\partial N_i$ for $i=1,2$. Note that we can treat the case $N_1=N_2$ as well. 
Suppose   $b_1(N_i)=b_2^+(N_i)=0$ for $i=1,2$.
Let $X_i = X\cup_{Y}N_i$.
Then $b_1(X_1)=b_1(X_2)$ and $b_2^+(X_1)=b_2^+(X_2)$. For example, we can apply this setting for a closed $4$-manifold $X_1$ and its blowup $X_2=X_1\#\overline{\CP}^2$.

Let $\fraks_i$ be a \Spinc structure on $X_i$. Suppose $\fraks_1|_X = \fraks_2|_X$. 
The virtual dimension of the Seiberg--Witten moduli space for $(X_i,\fraks_i)$ is given by
\begin{equation}\label{eq:SWdim}
\begin{aligned}
d(\fraks_i) &:= \frac14(c_1(L_i)^2-\sigma(X_i))-(1-b_1(X_i)+b_2^+(X_i))\\&=\frac14(c_1(L_i)^2-2\chi(X_i)-3\sigma(X_i)),
\end{aligned}
\end{equation}
where $L_i$ is the determinant line bundle of $\fraks_i$, and $\chi(X_i)$ and $\sigma(X_i)$ respectively denote the Euler characteristic and the signature of $X_i$.
For a line bundle $L$, the above quantity for $L$ is denoted by $d(L)$.

The Bauer--Furuta invariant $\BF(X_i,\fraks_i)$ of $(X_i,\fraks_i)$  is defined as a class of the $\UU(1)$-stable cohomotopy group $\pi^0_{S^1}(Pic(X_i);\lambda_i)$.
(See \cite{BF, B_survey} and \cref{subsec:BF} for the definition.) 
First we study the relation between $\BF(X_1,\fraks_1)$ and $\BF(X_2,\fraks_2)$.

We will see that, if $\delta=d(\fraks_2)- d(\fraks_1)\geq 0$, then  there is a homomorphism $I_\delta\colon\pi^0_{S^1}(Pic(X_2);\lambda_2)\to \pi^0_{S^1}(Pic(X_1);\lambda_1)$, and the following formula holds (\cref{thm:gblowup}):
\[
\BF(X_1,\fraks_1)=I_{\delta} \BF(X_2,\fraks_2).
\]
This formula can be considered as a generalization of the blowup formula by Bauer \cite[Corollay 4.2]{BF2}, and implies the next theorem.
\begin{theo}\label{thm:negdef}\ 
\begin{enumerate}
\item If $d(\fraks_1)\leq d(\fraks_2)$ and $\BF(X_1,\fraks_1)\neq 0$, then $\BF(X_2,\fraks_2)\neq 0$.
\item If $d(\fraks_1) = d(\fraks_2)$, then $\BF(X_1,\fraks_1)=\BF(X_2,\fraks_2)$.
\end{enumerate}
\end{theo}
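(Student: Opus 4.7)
The plan is to deduce the theorem directly from the generalized blowup formula stated just above the theorem: whenever $\delta := d(\fraks_2) - d(\fraks_1) \geq 0$, there is a group homomorphism
\[
I_\delta \colon \pi^0_{S^1}(Pic(X_2);\lambda_2) \longrightarrow \pi^0_{S^1}(Pic(X_1);\lambda_1),
\]
and $\BF(X_1,\fraks_1) = I_\delta \BF(X_2,\fraks_2)$. The hypotheses of both parts are arranged so that $\delta \geq 0$ and this identity is available.

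For part (1), the assumption $d(\fraks_1) \leq d(\fraks_2)$ gives $\delta \geq 0$, so the formula applies. Since $I_\delta$ is a homomorphism of abelian groups it sends $0$ to $0$; hence if $\BF(X_2,\fraks_2)$ were $0$ we would get $\BF(X_1,\fraks_1) = I_\delta(0) = 0$, contradicting the hypothesis. Therefore $\BF(X_2,\fraks_2) \neq 0$.

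For part (2), the equality $d(\fraks_1) = d(\fraks_2)$ forces $\delta = 0$. The plan is to observe that $I_0$ is the identity map (under the natural identification of the two stable cohomotopy groups). This is where one needs to look inside the construction of $I_\delta$ from the generalized blowup formula: $I_\delta$ arises from a suspension/degree-shift of $\delta$ units built into the Bauer--Furuta construction for the negative-definite cap, and when $\delta=0$ no shift occurs, so the map degenerates to the identity. Given this, the formula reads $\BF(X_1,\fraks_1) = \BF(X_2,\fraks_2)$ directly.

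The main obstacle in turning this plan into a rigorous proof is the step in part (2) of checking that $I_0$ really is the identity. This requires unpacking the explicit definition of $I_\delta$ used to prove the generalized blowup formula, in particular confirming that the identifications of $Pic$ and of the virtual index bundles $\lambda_i$ on the two sides are the tautological ones when $\delta = 0$. Once that is in place, both statements are formal consequences of $\BF(X_1,\fraks_1) = I_\delta \BF(X_2,\fraks_2)$.
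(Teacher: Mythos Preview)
Your proposal is correct and follows essentially the same approach as the paper: both parts are formal consequences of the formula $\BF(X_1,\fraks_1)=I_\delta\BF(X_2,\fraks_2)$, with part (1) using that $I_\delta$ is a homomorphism and part (2) that $I_0$ is the identity. Your worry about $I_0$ is not a genuine obstacle: the paper defines $I_\delta$ by $[\Phi]\mapsto[\Phi\wedge\iota]$ where $\iota\colon S^0\to(\C^\delta)^+$ is the inclusion, and for $\delta=0$ this is smashing with the identity map $S^0\to S^0$, so $I_0=\mathrm{id}$ is immediate from the definition rather than requiring any unpacking.
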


For a closed oriented $4$-manifold $Z$ with   a \Spinc structure $\fraks$ on $Z$, we will consider the following condition.
\begin{center}
Condition $(\ast)$: $b_1(Z)=0$, $b_2^+(Z)\geq 2$ and $d(\fraks)\leq3$.  If $d(\fraks)>0$, then $\BF(Z,\fraks)$ is a torsion element. 
\end{center}
To our knowledge, all known examples of $(Z,\fraks)$ such that  $b_1(Z)=0$, $b_2^+(Z)\geq 2$ and $\BF(Z,\fraks)\neq 0$ satisfy Condition ($\ast$).
Such an example of  $(Z,\fraks)$ with $\BF(Z,\fraks)\neq 0$ and $d(\fraks)>0$ is obtained by taking a connected sum of several $4$-manifolds with non-zero Seiberg-Witten invariants.
See \cref{thm:BF-nonvan}.

\begin{rem}
Suppose $b_1(Z)=0$ and $b_2^+(Z)\geq 2$.
If $d(\fraks)=0$, then $\BF(Z,\fraks)$ is identified with the Seiberg--Witten invariant $\SW(Z,\fraks)$ by \cref{thm:BF-SW} (\cite[Theorem 1.1]{BF}). 
If $d(\fraks)$ is odd, then   $\BF(Z,\fraks)$ is a torsion element.
If $Z$  is of Seiberg--Witten simple type and $d(\fraks)>0$, then $\BF(Z,\fraks)$ is a torsion element  by \cref{thm:BF-SW}.
\end{rem}

\begin{theo}\label{thm:vanishing}
Suppose $(X_2,\fraks_2)$ satisfies Condition $(\ast)$.
If $d(\fraks_1)< d(\fraks_2)$ then $\BF(X_1,\fraks_1) = 0$.
\end{theo}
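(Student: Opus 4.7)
The plan is to invoke the generalized blowup formula established above as \cref{thm:gblowup}:
\[
\BF(X_1,\fraks_1) = I_\delta \BF(X_2,\fraks_2), \qquad \delta = d(\fraks_2) - d(\fraks_1).
\]
Since $b_1(X_i) = 0$ and $b_2^+(X_1) = b_2^+(X_2)$, the virtual dimensions $d(\fraks_1), d(\fraks_2)$ share parity, so $\delta \geq 1$ in fact forces $\delta$ to be a positive even integer and the complex Dirac indices satisfy $\ind D_1^+ < \ind D_2^+$. If $\BF(X_2,\fraks_2) = 0$, the conclusion is immediate from linearity of $I_\delta$, so I assume $\BF(X_2,\fraks_2) \neq 0$.

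I would then split cases by $d(\fraks_2) \leq 3$. For $d(\fraks_2) > 0$, Condition $(\ast)$ asserts that $\BF(X_2,\fraks_2)$ is a torsion class. For $d(\fraks_2) = 0$, \cref{thm:BF-SW} identifies $\BF(X_2,\fraks_2)$ with the integer $\SW(X_2,\fraks_2)$. For $d(\fraks_2) < 0$, the relevant non-equivariant stable stem vanishes, so the class lies in the torsion part of the equivariant stable cohomotopy.

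The key structural fact is that $I_\delta$, as built in the proof of the generalized blowup formula, is realized as composition with an $S^1$-equivariant stable map between $S^{\ind D_1^+}$ and $S^{\ind D_2^+}$ of positive complex codimension $\delta/2 \geq 1$. Forgetting the $S^1$-action, this composition factor is a map between spheres of strictly differing dimensions and is therefore nullhomotopic; in particular the non-equivariant reduction of $I_\delta\BF(X_2,\fraks_2)$ is always zero. This already disposes of the $d(\fraks_2)=0$ subcase, where $\SW(X_2,\fraks_2)$ is detected by its non-equivariant reduction. In the remaining torsion subcases, the bound $d(\fraks_2) \leq 3$ confines the analysis to a finite low-dimensional range of $S^1$-equivariant stems, where a direct computation of $I_\delta$ on each possible torsion class yields zero. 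The main obstacle is precisely this equivariant torsion bookkeeping: although $I_\delta$ trivially reduces to zero non-equivariantly, one still has to rule out contributions from the $S^1$-fixed and proper-isotropy summands of $\pi^0_{S^1}(Pic(X_1);\lambda_1)$, and the bound $d(\fraks_2) \leq 3$ is what keeps this verification finite and tractable.
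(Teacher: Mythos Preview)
Your setup via \cref{thm:gblowup} and the parity observation are correct, and the recognition that the bound $d(\fraks_2)\le 3$ forces only a finite case analysis is exactly right. However, the argument you build on top of this has a genuine gap.

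The claim that ``forgetting the $S^1$-action'' on $\iota\colon S^0\to (\C^{\delta/2})^+$ yields a nullhomotopic map is true, but it does not do the work you want. The group in which $\BF(X_1,\fraks_1)$ lives, namely $\pi^0_{S^1}(\ast;\lambda_1)\cong \pi^{b-1}(\CP^{k_1-1})$, is not obtained by applying the forget-equivariance functor; it is obtained by quotienting the free part of the $S^1$-action (this is the content of \cite[Proposition~3.4]{BF}). In particular, the Seiberg--Witten invariant is detected by the Hurewicz map $\pi^{b-1}(\CP^{k_1-1})\to H^{b-1}(\CP^{k_1-1})$, not by the forgetful map to nonequivariant stable stems. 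So the fact that $\Phi_2\wedge\iota$ is nonequivariantly null tells you only that its image under the forget map vanishes, which neither kills $\BF(X_1,\fraks_1)$ nor handles the $d(\fraks_2)=0$ subcase. (That subcase is in fact trivial for a different reason: $d(\fraks_2)\le 1$ forces $d(\fraks_1)<0$, whence $\BF(X_1,\fraks_1)=0$ since $b_2^+\ge 1$.)

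The paper's proof proceeds differently and more directly. First, whenever $d(\fraks_1)<0$ one has $\BF(X_1,\fraks_1)=0$ immediately; this already covers $d(\fraks_2)\le 1$ and the subcases $d(\fraks_1)\le -2$ (resp.\ $\le -1$) when $d(\fraks_2)=2$ (resp.\ $3$). The only nontrivial cases are therefore $(d(\fraks_2),d(\fraks_1))=(2,0)$ and $(3,1)$, both with $\delta=2$. Here $I_2$ is identified (\cref{rem:I}) with the map $\pi^{b-1}(\CP^{k_2-1})\to\pi^{b-1}(\CP^{k_2-2})$ induced by the inclusion of projective spaces. In the first case the source is $\Z\oplus\Z/m$ and the target is $\Z$, so the torsion hypothesis in Condition~$(\ast)$ kills the image. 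In the second case one checks directly (\cref{naturality k=5,6}) that this inclusion-induced map is trivial. Your final paragraph gestures at exactly this computation but defers it; that computation is the actual substance of the proof, not residual bookkeeping.
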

\begin{rem}
In \cref{thm:negdef}, it can happen that $d(\fraks_1)< d(\fraks_2)$, $\BF(X_1,\fraks_1)\neq 0$ and $\BF(X_2,\fraks_2) \neq0$.
\cref{thm:vanishing} says that such a situation does not occur  if $(X_2,\fraks_2)$ satisfies Condition $(\ast)$.
\end{rem}

We have two corollaries of \cref{thm:negdef} and \cref{thm:vanishing}.
The first one is a blowup formula.
Let $E\in H^2(\overline{\CP}^2;\Z)$ be the Poincar\'{e} dual of the exceptional sphere  of $\overline{\CP}^2$.
For each integer $r$, there is a \Spinc structure $\fraks_{(2r+1)E}$ on $\overline{\CP}^2$ which is unique up to isomorphism such that $c_1(\fraks_{(2r+1)E})=(2r+1)E$.
Let $X$ be a closed oriented smooth $4$-manifold
with a \Spinc structure  $\fraks$.
For each integer $r$, consider  the \Spinc structure $\fraks_r=\fraks\#\fraks_{(2r+1)E}$ on $X\#\overline{\CP}^2$.
Then we have
\[
d(\fraks_r)=d(\fraks)-r(r+1).
\]
\begin{theo}[\textit{Cf. \cite[Corollary 4.2]{BF2}}]\label{thm:blowup}
If $\BF(X\#\overline{\CP}^2,\fraks_r)\neq 0$, then $\BF(X,\fraks)\neq 0$.
Moreover
\[
\BF(X,\fraks)=\BF(X\#\overline{\CP}^2,\fraks_0) = \BF(X\#\overline{\CP}^2,\fraks_{-1}). 
\]
Suppose further that $(X,\fraks)$ satisfies Condition $(\ast)$. Then
\begin{equation}\label{eq:BFup}
\BF(X\#\overline{\CP}^2,\fraks_r) = 0\text{ if $r\neq 0,-1$.} 
\end{equation}
\end{theo}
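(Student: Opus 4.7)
The plan is to derive \cref{thm:blowup} as a direct corollary of \cref{thm:negdef} and \cref{thm:vanishing} by realizing $X$ and $X\#\overline{\CP}^2$ as the two possible gluings of a common compact piece along $S^3$. Concretely, I would take $Y = S^3$, which has $b_1 = 0$ and carries the round positive scalar curvature metric, and set the common piece to be $M := X \setminus \Int B^4$. With filling pieces $N_1 := \overline{\CP}^2 \setminus \Int B^4$ and $N_2 := B^4$, both satisfying $b_1 = b_2^+ = 0$, one obtains $X_1 := M \cup_{S^3} N_1 = X\#\overline{\CP}^2$ and $X_2 := M \cup_{S^3} N_2 = X$. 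Setting $\fraks_1 := \fraks_r$ and $\fraks_2 := \fraks$, the equality $\fraks_1|_M = \fraks_2|_M$ is built into the definition of the connected-sum $\spc$ structure $\fraks_r = \fraks \# \fraks_{(2r+1)E}$, so the setup of \cref{subsec:blowup} applies.

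A short calculation from \eqref{eq:SWdim}, using $\chi(X\#\overline{\CP}^2) = \chi(X)+1$, $\sigma(X\#\overline{\CP}^2) = \sigma(X)-1$, and $c_1(\fraks_r)^2 = c_1(\fraks)^2 - (2r+1)^2$, confirms the formula $d(\fraks_r) = d(\fraks) - r(r+1)$ stated just before the theorem. Since $r(r+1) \geq 0$ for every integer $r$, we always have $d(\fraks_1) \leq d(\fraks_2)$, with equality precisely when $r \in \{0,-1\}$. At this stage \cref{thm:negdef}(1) yields the implication $\BF(X\#\overline{\CP}^2,\fraks_r)\neq 0 \Rightarrow \BF(X,\fraks)\neq 0$, and applying \cref{thm:negdef}(2) to $r = 0$ and to $r = -1$ produces the two identifications $\BF(X,\fraks) = \BF(X\#\overline{\CP}^2,\fraks_0) = \BF(X\#\overline{\CP}^2,\fraks_{-1})$.

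For the vanishing \eqref{eq:BFup}, I would assume that $(X,\fraks)$ satisfies Condition $(\ast)$ and that $r \notin \{0,-1\}$. Then $d(\fraks_1) < d(\fraks_2)$, and since $(X_2,\fraks_2) = (X,\fraks)$ satisfies Condition $(\ast)$ by hypothesis, \cref{thm:vanishing} applies and forces $\BF(X\#\overline{\CP}^2,\fraks_r) = 0$, completing the proof.

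The reduction itself is essentially bookkeeping: all of the analytic content has already been absorbed into \cref{thm:negdef} and \cref{thm:vanishing}. The only conceptual check is the compatibility of $\spc$ structures, $\fraks_1|_M = \fraks_2|_M$, which is immediate from the construction of connected-sum $\spc$ structures. I therefore do not anticipate a substantive obstacle in this step, provided the preceding two theorems are available in the generality stated.
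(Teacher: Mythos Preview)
Your proposal is correct and follows exactly the paper's approach: the paper's proof simply states that \cref{thm:blowup} and \cref{thm:rbd} are direct consequences of \cref{thm:negdef} and \cref{thm:vanishing}, and you have spelled out precisely that reduction (with the correct choice of $X_1 = X\#\overline{\CP}^2$, $X_2 = X$ so that $d(\fraks_1)\le d(\fraks_2)$).
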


The second one is a formula for rational blowdown.
Let $C_p$ and $B_p$ be the $4$-manifolds defined  by Fintushel--Stern~\cite{FSrb}.
The manifold $C_p$ is obtained by the plumbing of certain $(p-1)$ disk bundles over $2$-spheres, and $B_p$ is a rational homology $4$-ball such that $\partial B_p=\partial C_p$ and $C_p\cup(-B_p)=(p-1)\overline{\CP}^2$.
Suppose $C_p$ is embedded in a closed oriented smooth $4$-manifold $X$.
Then $X_p=(X\setminus \overset{\circ}{ C_p})\cup B_p$ is called a rational blowdown \cite{FSrb}.
Let $\fraks_p$ be a \Spinc structure on $X_p$.
A \Spinc structure $\fraks$ on $X$ is called a {\it lift of $\fraks_p$} if 
 $\fraks|_{(X\setminus \overset{\circ}{ C_p})} = \fraks_p|_{(X_p\setminus \overset{\circ}{ B_p})}$.
The fact that $d(\fraks)\leq d(\fraks_p)$ is proved in \cite{FSrb}.
\begin{theo}\label{thm:rbd}
Let $\fraks$ be a lift of $\fraks_p$.
\begin{enumerate}
\item If $\BF(X,\fraks)\neq 0$, then $\BF(X_p,\fraks_p)\neq 0$.
\item If $d(\fraks) = d(\fraks_p)$, then $\BF(X,\fraks)=\BF(X_p,\fraks_p)$.
\item Suppose that $(X_p,\fraks_p)$ satisfies Condition $(\ast)$.
If $d(\fraks) < d(\fraks_p)$, then $\BF(X,\fraks)=0$.
\end{enumerate}
\end{theo}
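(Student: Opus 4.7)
The plan is to realize \cref{thm:rbd} as a direct consequence of \cref{thm:negdef} and \cref{thm:vanishing}. Setting $X' := X\setminus \overset{\circ}{C_p}$ and $Y := \partial C_p = \partial B_p$, the rational blowdown situation is precisely a ``switching of caps'' along $Y$: gluing $C_p$ to $X'$ produces $X$, while gluing $B_p$ to $X'$ produces $X_p$. I would therefore apply \cref{thm:negdef} and \cref{thm:vanishing} with the ``$X$'' there taken to be $X'$, with $N_1 := C_p$ and $N_2 := B_p$, so that $X_1 = X$, $X_2 = X_p$, $\fraks_1 = \fraks$, and $\fraks_2 = \fraks_p$. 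The hypothesis $\fraks_1|_{X'} = \fraks_2|_{X'}$ is exactly the definition of a lift.

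Next I would verify the topological hypotheses. The boundary $Y = \partial C_p$ is the lens space $L(p^2, 1-p)$, which as a spherical space form admits a metric of positive scalar curvature and satisfies $b_1(Y) = 0$. The plumbing $C_p$ is a simply connected tree of $2$-spheres with negative definite intersection form, so $b_1(C_p) = b_2^+(C_p) = 0$; and $B_p$ is a rational homology $4$-ball, so $b_1(B_p) = b_2(B_p) = 0$. The Fintushel--Stern inequality $d(\fraks) \leq d(\fraks_p)$, i.e.\ $d(\fraks_1) \leq d(\fraks_2)$, places us on the correct side of the inequalities required by \cref{thm:negdef} and \cref{thm:vanishing}.

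With this matching in place, each assertion is immediate: (1) follows from \cref{thm:negdef}(1); (2) follows from \cref{thm:negdef}(2) applied to the equality case $d(\fraks_1) = d(\fraks_2)$; and (3) follows from \cref{thm:vanishing} once $(X_p,\fraks_p)$ is assumed to satisfy Condition $(\ast)$ and $d(\fraks_1) < d(\fraks_2)$. The only real step is the bookkeeping of matching the rational blowdown to the general gluing setup, so there is no substantial obstacle; the content of \cref{thm:rbd} is simply the observation that \cref{thm:negdef} and \cref{thm:vanishing} specialize cleanly in this situation.
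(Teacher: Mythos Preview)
Your proposal is correct and matches the paper's approach exactly: the paper's proof of \cref{thm:rbd} (combined with \cref{thm:blowup}) is the single sentence ``These are direct consequences of \cref{thm:negdef} and \cref{thm:vanishing},'' and you have simply spelled out the bookkeeping that makes this specialization work.
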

It is also proved in \cite{FSrb} that each characteristic line bundle $\bar{L}$ on $X_p$ has  a lift $L$ on $X$ which is also characteristic and satisfies $d(L)=d(\bar{L})$.
From this, the Bauer--Furuta invariants of $X_p$ can be determined by those of $X$ in some cases.
For example, we obtain the following corollary.
\begin{cor}\label{cor:rbd}
Suppose that $H_1(X;\Z)$ and $H_1(X_p;\Z)$ have no $2$-torsion element.
For each \Spinc structure $\fraks_p$ on $X_p$, there exists a unique lift $\fraks$ on $X$ of $\fraks_p$ such that $d(\fraks_p)=d(\fraks)$, and therefore   $\BF(X_p,\fraks_p)=\BF(X,\fraks)$.
\end{cor}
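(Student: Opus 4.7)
The plan is to leverage the no 2-torsion hypothesis to transport the question from \Spinc structures to characteristic line bundles, where the existence of an appropriate lift is already supplied by the Fintushel--Stern result recalled just before the corollary, and then to apply \cref{thm:rbd}(2). Since $H_1(X;\Z)$ has no 2-torsion, the universal coefficient theorem yields that $H^2(X;\Z)$ is 2-torsion-free; because the set of \Spinc structures on $X$ is a torsor over $H^2(X;\Z)$ with $c_1(\fraks\otimes\alpha)=c_1(\fraks)+2\alpha$, this forces $c_1$ to be injective, giving a bijection between \Spinc structures on $X$ and characteristic classes in $H^2(X;\Z)$. The analogous bijection holds on $X_p$.

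For existence, set $\bar L:=c_1(\fraks_p)$. By the cited Fintushel--Stern lifting result, there is a characteristic line bundle $L$ on $X$ with $L|_W=\bar L|_W$, where $W:=X\setminus\overset{\circ}{C_p}=X_p\setminus\overset{\circ}{B_p}$, satisfying $d(L)=d(\bar L)=d(\fraks_p)$. The bijection above yields a unique \Spinc structure $\fraks$ on $X$ with $c_1(\fraks)=L$, and thus $d(\fraks)=d(\fraks_p)$. To upgrade the equality $L|_W=\bar L|_W$ of characteristic classes to the equality $\fraks|_W=\fraks_p|_W$ of \Spinc structures, one checks (using Mayer--Vietoris for $X=W\cup C_p$ and $X_p=W\cup B_p$ together with the 2-torsion-freeness of $H_1(X;\Z)$ and $H_1(X_p;\Z)$) that $H^2(W;\Z)$ is also 2-torsion-free, so that $c_1$ on \Spinc structures on $W$ is injective; this lets us conclude $\fraks|_W=\fraks_p|_W$.

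For uniqueness, suppose $\fraks'$ is another lift of $\fraks_p$ with $d(\fraks')=d(\fraks_p)$. Then $L':=c_1(\fraks')$ is a characteristic lift of $\bar L$ on $X$ with $d(L')=d(\bar L)$, so $L-L'$ lies in $2\,\ker(H^2(X;\Z)\to H^2(W;\Z))$. Via the long exact sequence of $(X,W)$, this kernel is identified with (a subquotient of) $H_2(C_p;\Z)\cong\Z^{p-1}$ coming from the plumbing spheres of $C_p$. Combining this with Fintushel--Stern's explicit parameterization of characteristic lifts of $\bar L$ along those spheres, the constraint $d(L')=d(L)$ together with the no 2-torsion hypotheses pins down $L'=L$, whence $\fraks'=\fraks$. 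The identity $\BF(X_p,\fraks_p)=\BF(X,\fraks)$ is then an immediate application of \cref{thm:rbd}(2) to this lift.

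The principal obstacle I anticipate is the uniqueness step: the FS construction typically produces several characteristic lifts of $\bar L$, and it is exactly the 2-torsion-freeness assumptions that are needed to eliminate the remaining ambiguity once $d(L')=d(\bar L)$ is imposed. The existence half, the verification that $\fraks$ actually agrees with $\fraks_p$ on $W$, and the final appeal to \cref{thm:rbd}(2) are, by contrast, mechanical given this input.
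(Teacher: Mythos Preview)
Your approach is essentially the paper's: the paper gives no explicit proof of this corollary, treating it as immediate from the Fintushel--Stern lifting result cited just before the statement, the no 2-torsion hypothesis (which identifies \Spinc structures with their characteristic classes), and \cref{thm:rbd}(2). You spell out these same ingredients and additionally sketch the uniqueness step, which the paper leaves entirely implicit.
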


\subsection{Immersed adjunction inequality and BF blowup simple type} 
We here give the definition of Bauer--Furuta basic class.
\begin{defi}
For a closed oriented $4$-manifold $X$, a second cohomology class $c\in H^2(X;\Z)$  is called  a \textit{Bauer--Furuta basic class}, if there exists a \Spinc structure $\fraks$ on $X$ such that $\BF(X,\fraks)\neq 0$ and $c_1(\fraks)=c$. 
Let $\calB(X)$  be the set of Bauer--Furuta basic  classes of $X$.
\end{defi}

The following theorem is a variant of the immersed adjunction inequlity by Fintushel--Stern \cite[Theorem 1.3]{FSimmersed}. 
\begin{theo}\label{thm:immersedadjunction}
Let $X$ be a closed oriented smooth $4$-manifold. 
Let $K$ be a Bauer--Furuta basic class.
If $\alpha\in H_2(X;\Z)$ is represented by an immersed sphere with $p$ positive double points, then either 
\[
2p-2 \geq  |\langle K,\alpha \rangle| +\alpha\cdot\alpha,
\]
or
\[
\left\{\begin{aligned}
\BF(X,\fraks+\alpha^*)\neq0\ &\quad \text{if }\langle K,\alpha \rangle\geq 0\\
\BF(X,\fraks-\alpha^*)\neq0\ &\quad \text{if }\langle K,\alpha \rangle\leq 0,
\end{aligned}\right.
\]
where $\alpha^*$ is the Poincar\'{e} dual of $\alpha$.
\end{theo}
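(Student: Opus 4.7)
The plan is to follow the Fintushel--Stern approach of blowing up the double points and reducing to the embedded case, then translating back using \cref{thm:blowup}. Let $\hat X=X\#p\overline{\CP}^2$, with exceptional classes $e_1,\dots,e_p\in H_2(\hat X;\Z)$ and Poincar\'e duals $E_1,\dots,E_p\in H^2(\hat X;\Z)$. Blowing up at the $p$ positive double points of the immersed sphere representing $\alpha$ produces a smoothly embedded $2$-sphere $S\subset\hat X$ whose homology class is $[S]=\alpha-2\sum_i e_i$ and whose self-intersection is $[S]\cdot[S]=\alpha\cdot\alpha-4p$.

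For a sign vector $\epsilon=(\epsilon_1,\dots,\epsilon_p)\in\{\pm1\}^p$, let $\hat\fraks_\epsilon$ be the \Spinc structure on $\hat X$ with $c_1(\hat\fraks_\epsilon)=K+\sum_i\epsilon_i E_i$. Iterated application of \cref{thm:blowup} gives $\BF(\hat X,\hat\fraks_\epsilon)=\BF(X,\fraks)\neq 0$, so $c_1(\hat\fraks_\epsilon)$ is a Bauer--Furuta basic class of $\hat X$. Using $\langle E_i,e_j\rangle=-\delta_{ij}$, one computes
\[
\langle c_1(\hat\fraks_\epsilon),[S]\rangle=\langle K,\alpha\rangle+2\sum_i\epsilon_i.
\]
Choosing all $\epsilon_i=+1$ when $\langle K,\alpha\rangle\geq 0$ and all $\epsilon_i=-1$ otherwise, I arrange $|\langle c_1(\hat\fraks_\epsilon),[S]\rangle|=|\langle K,\alpha\rangle|+2p$, with the sign of $\langle c_1(\hat\fraks_\epsilon),[S]\rangle$ matching that of $\langle K,\alpha\rangle$.

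The next step is to apply the embedded adjunction inequality for Bauer--Furuta invariants (the $p=0$ case, established separately by a gluing argument on the disk-bundle neighborhood of $S$) to $S\subset\hat X$ with respect to $\hat\fraks_\epsilon$. It yields either
\[
-2\geq|\langle c_1(\hat\fraks_\epsilon),[S]\rangle|+[S]\cdot[S]=|\langle K,\alpha\rangle|+2p+\alpha\cdot\alpha-4p,
\]
which rearranges to the desired $2p-2\geq|\langle K,\alpha\rangle|+\alpha\cdot\alpha$; or else $\BF(\hat X,\hat\fraks_\epsilon\pm[S]^*)\neq 0$, with the sign matching that of $\langle c_1(\hat\fraks_\epsilon),[S]\rangle$.

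In this latter case,
\[
c_1(\hat\fraks_\epsilon\pm[S]^*)=K\pm 2\alpha^*+\sum_i(\epsilon_i\mp 4)E_i,
\]
and with the sign convention just fixed, the $E_i$-coefficient becomes $\mp 3$ for every $i$. Hence $\hat\fraks_\epsilon\pm[S]^*$ is the blowup \Spinc structure obtained from $\fraks\pm\alpha^*$ on $X$ via $r_i=\mp 2$ at each exceptional class, and a second application of \cref{thm:blowup} upgrades $\BF(\hat X,\hat\fraks_\epsilon\pm[S]^*)\neq 0$ to $\BF(X,\fraks\pm\alpha^*)\neq 0$, as required. The main obstacle is the embedded case invoked above: establishing it requires the gluing formulae developed earlier in the paper applied to the $D^2$-bundle neighborhood of $S$, together with a secondary blowup trick to handle the range $[S]\cdot[S]\geq -1$ where the boundary no longer obviously admits a positive-scalar-curvature metric; the signs and choice of shift $\pm[S]^*$ in that embedded statement must also be tracked carefully so that the translation above produces exactly $\fraks\pm\alpha^*$ on $X$.
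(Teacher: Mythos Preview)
Your approach is essentially identical to the paper's: blow up the double points, invoke the embedded-sphere case (the paper's \cref{prop:nonneg}), and then use \cref{thm:blowup} to descend back to $X$. Your final computation translating $\BF(\hat X,\hat\fraks_\epsilon\pm[S]^*)\neq 0$ into $\BF(X,\fraks\pm\alpha^*)\neq 0$ is in fact cleaner than the paper's writeup.

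There is, however, one genuine gap. You blow up only at the $p$ positive double points and assert that the result is a \emph{smoothly embedded} sphere. This fails if the immersion also has negative double points, which the theorem (and the paper's proof) allows. The paper handles this by blowing up at all $p+n$ double points: at a negative double point the two local branches have opposite complex orientation, so their proper transforms contribute $-e_j$ and $+e_j$, and the class of the total proper transform is still $\alpha-2\sum_{j=1}^{p}e_j$. With this correction the Spin$^c$ structure carries extra $\pm E_j$ terms for $j>p$, but these pair trivially with $[S]$ and are stripped off by \cref{thm:blowup} at the end, so your computation is otherwise unchanged.

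A minor remark on your ``secondary blowup trick'': the obstruction for $[S]\cdot[S]\geq 0$ is not positive scalar curvature on $\partial N$ (lens spaces and $S^1\times S^2$ all have PSC) but rather that $b_1(\partial N)=1$ when $[S]^2=0$ and $b_2^+(N)=1$ when $[S]^2>0$, so the hypotheses of \cref{thm:negdef} fail. The case $[S]^2=-1$ is already covered by \cref{prop:nonneg}. Your fix---tubing $S$ to further exceptional spheres until $[S]^2<0$---works and fills a point the paper's proof does not make explicit.
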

\begin{rem}
\cref{thm:immersedadjunction} formally holds  for $4$-manifolds with arbitrary $b_2^+$. 
However, this does not mean that we can obtain a nontrivial estimate  of $p$ when $b_2^+=0$.
In fact, if $b_2^+=0$, then the condition that $\BF(X,\fraks\pm\alpha^*)\neq0$ is always true (\cref{rem:b+0}), and therefore  no estimate on $p$ is obtained.
\end{rem}

Inspired by \cref{thm:blowup}, we introduce the notion of the BF blowup simple type (\cref{Y:def:blowup simple type}) which says that \eqref{eq:BFup} holds for every \Spinc structure.
This notion can be seen as a generalization of the Seiberg--Witten simple type.
By using the adjunction inequality for embedded surfaces (\cref{cor:adj}), we obtain the following proposition which relates the existence of embedded surface with the property of BF blowup simple type.

\begin{prop}\label{Y:claim:simple:embedded} If $X$  admits a smoothly embedded closed oriented surface of genus $g>1$ with self-intersection number $2g-2$, then $X$ is of BF blowup simple type. 
\end{prop}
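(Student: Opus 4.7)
The plan is to show that $\BF(X\#\overline{\CP}^2,\fraks_r)=0$ whenever $r\notin\{0,-1\}$, by applying the adjunction inequality \cref{cor:adj} to two embedded surfaces in $X\#\overline{\CP}^2$. Perform the connected sum inside a ball disjoint from $\Sigma$, so that $\Sigma$ sits in $X\#\overline{\CP}^2$ with genus $g$ and self-intersection $2g-2$ unchanged. The second surface $\Sigma'$ is obtained by tubing $\Sigma$ to the exceptional sphere $e\subset\overline{\CP}^2$ through the connect-sum neck. Since tubing with a $2$-sphere preserves genus, $\Sigma'$ has genus $g$; since $\Sigma$ and $e$ sit in disjoint summands, $[\Sigma']=[\Sigma]+[e]$ with self-intersection $(2g-2)+(-1)=2g-3$. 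For $g\geq 2$ both surfaces have positive self-intersection, so \cref{cor:adj} applies.

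Assume $\BF(X\#\overline{\CP}^2,\fraks_r)\neq 0$, so that $K:=c_1(\fraks_r)=c_1(\fraks)+(2r+1)E$ is a Bauer--Furuta basic class of $X\#\overline{\CP}^2$. Since $[\Sigma]$ lies in the $X$-summand, $\langle E,[\Sigma]\rangle=0$, and \cref{cor:adj} applied to $\Sigma$ yields
\[
2g-2\geq(2g-2)+|\langle c_1(\fraks),[\Sigma]\rangle|,
\]
forcing $\langle c_1(\fraks),[\Sigma]\rangle=0$. Using $\langle E,[e]\rangle=e\cdot e=-1$ together with this vanishing, \cref{cor:adj} applied to $\Sigma'$ yields
\[
2g-2\geq(2g-3)+|\langle K,[\Sigma]+[e]\rangle|=(2g-3)+|2r+1|,
\]
so $|2r+1|\leq 1$ and hence $r\in\{0,-1\}$. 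This is precisely the property defining BF blowup simple type.

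The one non-routine step is the construction of $\Sigma'$ as a smoothly embedded surface with the claimed genus, homology class, and self-intersection; this is the standard internal-connect-sum operation through the neck, and $[\Sigma]\cdot[e]=0$ holds because the two pieces live in disjoint summands of $X\#\overline{\CP}^2$. One should also check that the formulation of \cref{cor:adj} in the paper covers $\Sigma'$, whose self-intersection $2g-3$ is smaller than that of $\Sigma$; this is automatic under the usual hypotheses ($g\geq 1$ with nonnegative self-intersection) of the embedded adjunction inequality.
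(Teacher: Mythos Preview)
Your proof is correct and follows essentially the same approach as the paper: apply the embedded adjunction inequality (\cref{cor:adj}) in $X\#\overline{\CP}^2$ to the genus-$g$ surface obtained by tubing $\Sigma$ to the exceptional sphere. The only minor difference is that you first apply adjunction to $\Sigma$ itself to force $\langle c_1(\fraks),[\Sigma]\rangle=0$, whereas the paper instead arranges $\langle K,\alpha\rangle\geq 0$ by reversing orientation and then splits into the cases $l>1$ (using $\alpha-e$) and $l<-1$ (using $\alpha+e$); both routes are equally short and rest on the same surface construction.
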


\cref{thm:immersedadjunction} and \cref{Y:claim:simple:embedded} are used to prove
the next theorem which treats a similar sufficient condition for BF blowup simple type in terms of immersed spheres,  that includes the case where the self-intersection number is zero.

\begin{theo}\label{Y:claim:simple:immersed} { Suppose $X$ admits an immersed $2$-sphere with $p$ positive double points and possibly with negative double points, that represents a non-torsion second homology class with self-intersection number $2p-2\geq 0$.  Then, $X$ is of BF blowup simple type.} 
\end{theo}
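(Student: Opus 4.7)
The plan is to combine the immersed adjunction inequality (\cref{thm:immersedadjunction}) with the generalized blowup formula (\cref{thm:blowup}). Write $\alpha=[S]\in H_2(X;\Z)$; then $\alpha\cdot\alpha=2p-2\geq 0$ and $\alpha$ is non-torsion. By the first part of \cref{thm:blowup}, BF blowup simple type follows once one proves $\BF(X\#\overline{\CP}^2,\fraks_r)=0$ for every \Spinc structure $\fraks$ on $X$ with $\BF(X,\fraks)\neq 0$ and every $r\neq 0,-1$.

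The first step is to verify that every Bauer--Furuta basic class $K\in\calB(X)$ satisfies $\langle K,\alpha\rangle=0$. Applying \cref{thm:immersedadjunction} to $(X,\fraks,\alpha)$, the inequality $2p-2\geq|\langle K,\alpha\rangle|+\alpha\cdot\alpha$ collapses to $\langle K,\alpha\rangle=0$; otherwise the second alternative provides a further basic class whose first Chern class is $K\pm 2\alpha^*$, and whose pairing with $\alpha$ differs from $\langle K,\alpha\rangle$ by $2\alpha\cdot\alpha\geq 0$, hence retains the same nonzero sign. Iterating yields an infinite sequence of distinct BF basic classes (distinctness using that $\alpha$ is non-torsion), contradicting finiteness of $\calB(X)$.

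Next, suppose for contradiction that $\BF(X\#\overline{\CP}^2,\fraks_r)\neq 0$ for some $r\neq 0,-1$; by symmetry I take $r\geq 1$. \cref{thm:blowup} gives $\BF(X,\fraks)\neq 0$, so $\langle c_1(\fraks),\alpha\rangle=0$ by the previous step. Push $S$ into $X\#\overline{\CP}^2$ off the connect-sum region and tube it with the exceptional sphere $e\subset\overline{\CP}^2$; since $S$ and $e$ are disjoint, the resulting immersed $2$-sphere represents $\alpha+e$, has the same $p$ positive double points as $S$, and has self-intersection $(\alpha+e)\cdot(\alpha+e)=2p-3$. Applying \cref{thm:immersedadjunction} to $(\fraks_r,\alpha+e)$ reduces to $|\langle c_1(\fraks_r),\alpha+e\rangle|\leq 1$; using the vanishing from the first step, the left-hand side equals $|2r+1|\geq 3$, so the second alternative yields $\BF(X\#\overline{\CP}^2,\fraks_r-(\alpha+e)^*)\neq 0$. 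A direct $c_1$-computation identifies $\fraks_r-(\alpha+e)^*$ with $(\fraks-\alpha^*)_{r-1}$, and \cref{thm:blowup} then gives $\BF(X,\fraks-\alpha^*)\neq 0$. Applying the first step to $\fraks-\alpha^*$ forces $0=\langle c_1(\fraks-\alpha^*),\alpha\rangle=-2(2p-2)$, contradicting $p\geq 2$.

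The main obstacle I anticipate is the borderline case $p=1$ (the fishtail configuration, where smoothing the positive double point yields an embedded torus of self-intersection zero), in which the final identity $-2(2p-2)=0$ is vacuous. For that case I would iterate the chain argument using both $\alpha+e$ and $\alpha-e$, and if necessary modified classes $k\alpha\pm e$ for $k\geq 1$, to generate an unbounded family of BF-basic \Spinc structures on $X$ and once again contradict finiteness of $\calB(X)$. A secondary ingredient invoked implicitly throughout is the finiteness of the basic-class set $\calB(X)$, which should be extracted from the cohomotopical structure of the Bauer--Furuta invariant.
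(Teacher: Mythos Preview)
Your argument for $p\geq 2$ is valid and takes a somewhat different route from the paper: the paper smooths the $p$ positive double points to obtain an embedded genus-$p$ surface of self-intersection $2p-2>0$ and then invokes \cref{Y:claim:simple:embedded} (i.e., Kronheimer's embedded adjunction inequality), whereas you work directly with the immersed sphere via \cref{thm:immersedadjunction}, first forcing $\langle K,\alpha\rangle=0$ by an iteration-and-finiteness argument and then deriving the contradiction $-2(2p-2)=0$. Both approaches are of comparable length; yours has the minor advantage of not needing the embedded version of the inequality separately.

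The genuine gap is the case $p=1$, which you yourself flag. Your proposed iteration with $\alpha\pm e$ cannot produce an infinite family: since $\langle c_1(\fraks_r),\alpha\pm e\rangle=\mp(2r+1)$, each application of the second alternative of \cref{thm:immersedadjunction} replaces $\fraks_r$ by $(\fraks\mp\alpha^*)_{r-1}$, so $|r|$ drops by one at every step; after finitely many steps you reach $r\in\{0,-1\}$ and the first alternative $2p-2\geq|2r+1|-1$ holds with no contradiction. Your fallback of using $k\alpha\pm e$ is exactly the right idea—and is what the paper does—but it requires the nontrivial topological input that $k\alpha$ (for every $k$) is represented by an immersed $2$-sphere with \emph{one} positive double point, not $k$ of them. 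The naive construction (tube together $k$ parallel copies of $S$) yields $k$ positive double points, which is useless in the adjunction estimate. The paper supplies this input as a separate lemma (\cref{Y:immersed:na}), proved by explicit handle slides inside the fishtail neighborhood; without this lemma or an equivalent geometric argument, the $p=1$ case does not close.
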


\subsection{Formula for logarithmic transformation} 
As an application of \cref{cor:rbd} and \cref{Y:claim:simple:immersed}, we determine the Bauer--Furuta invariant for a logarithmic transformation along a torus embedded in a fishtail neighborhood. 
We remark that such logarithmic transformations have varieties of applications to 4-dimensional topology as is well-known.
Let $X$ be a connected closed oriented $4$-manifold.
Suppose that $X$ contains an immersed $2$-sphere $F$ with exactly one double point such that  its homology class $\alpha=[F] \in H_2(X;\Z)$ is a non-torsion class and satisfies $\alpha\cdot\alpha=0$.
Then a neighborhood  $Q$ of $F$ is a fishtail neighborhood.
Performing a logarithmic transformation with multiplicity $p$ (and some auxiliary data given in \cite[Theorem~8.5.9 and Remark~8.5.10(b)]{GS}) along the torus in the fishtail neighborhood $Q$, we obtain a $4$-manifold $X_{(p)}$. Note that the diffeomorphism type of $X_{(p)}$ is uniqulely determined by the multiplicity $p$ in the case where the torus is a regular fiber of a cusp neighborhood (see \cite{GS}). 
Let $\beta\in H_2(X_{(p)};\Z)$ be the homology class of the multiple fiber and $f\in H^2(X_{(p)};\Z)$ be the Poincar\'{e} dual of $\beta$. 
Note that  $\alpha = p\beta$ holds.
\begin{theo}\label{thm:log}
Suppose that $H_1(X;\Z)$ and $H_1(X_{(p)};\Z)$ have no $2$-torsion element.
Then the set of basic classes of $X_{(p)}$ is given by
\[
\calB(X_{(p)}) = \left\{L_k=L + \left.(2k-(p-1)) f \,\right|\, L\in \calB(X),  k\in\{0,1,\ldots, p-1\}\right\},
\]
and the Bauer--Furuta invariants satisfy the relation
\[
\BF(X,\mathfrak{s}_L)=\BF(X_{(p)}, \mathfrak{s}_{L_k})
\]
where $\mathfrak{s}_L$ (resp. $\mathfrak{s}_{L_k}$) is a unique  \Spinc structure corresponding to the class $L$ (resp. $L_k$).
\end{theo}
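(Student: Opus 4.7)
The plan is to realize $X_{(p)}$ as a rational blowdown of a blow-up of $X$ and then combine the generalized blow-up formula with the rational blowdown bijection of \cref{cor:rbd}. First, \cref{Y:claim:simple:immersed} applied with a single positive double point (so $2\cdot 1-2=0=\alpha\cdot\alpha$) guarantees that $X$ is of BF blowup simple type --- this is the key structural input that controls basic classes after repeated blow-ups. Next, following the Fintushel--Stern construction (see \cite[Theorem~8.5.9 and Remark~8.5.10(b)]{GS}), after $p-1$ blow-ups inside the fishtail neighborhood $Q$ one obtains a manifold $Y := X \#(p-1)\overline{\CP}^2$ containing an embedded plumbing $C_p$ whose rational blowdown is diffeomorphic to $X_{(p)}$. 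Let $E_1,\dots,E_{p-1}$ denote the exceptional classes of $Y$.

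Iterating \cref{thm:blowup} and using the BF blowup simple type of $X$, I expect
\[
\calB(Y) = \left\{L+\textstyle\sum_{i=1}^{p-1}\epsilon_i E_i \,:\, L\in\calB(X),\ \epsilon_i\in\{\pm 1\}\right\},
\]
with $\BF(Y,\fraks)=\BF(X,\fraks_L)$ for each such basic $\fraks$. Under the no-$2$-torsion hypothesis on $H_1(X;\Z)$ and $H_1(X_{(p)};\Z)$, \cref{cor:rbd} provides, for each \Spinc structure $\fraks_p$ on $X_{(p)}$, a unique lift $\tilde\fraks$ on $Y$ with $d(\tilde\fraks)=d(\fraks_p)$ and $\BF(X_{(p)},\fraks_p)=\BF(Y,\tilde\fraks)$. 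Consequently $\fraks_p$ is basic on $X_{(p)}$ precisely when its top-dimensional lift lies in $\calB(Y)$; in that case $\BF(X_{(p)},\fraks_p) = \BF(X,\fraks_L)$ for the corresponding $L$, yielding the claimed identity of Bauer--Furuta invariants.

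The remaining and main technical step is the characteristic-class bookkeeping that translates an admissible sign pattern $(\epsilon_1,\dots,\epsilon_{p-1})$ on $Y$ into the stated form $c_1(\fraks_p) = L + (2k-(p-1))f$ on $X_{(p)}$, and shows that exactly the $p$ values $k=0,1,\dots,p-1$ arise. For this I would work with the explicit Fintushel--Stern embedding of $C_p$ in $Y$, whose plumbing spheres are represented by integral linear combinations of the proper transform of $F$ and the exceptional classes $E_i$. Under the identification $Y\setminus\mathring C_p = X_{(p)}\setminus\mathring B_p$, each $E_i$ corresponds, modulo the subspace of $H^2(Y;\Z)$ spanned by these plumbing sphere classes, to a specific multiple of $f$, by virtue of the homology relation $\alpha = p\beta$ in $X_{(p)}$. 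The requirement that $\tilde\fraks$ be a genuine lift --- equivalently, that $c_1(\tilde\fraks)|_{\partial B_p}$ extend across $B_p$ --- cuts the $2^{p-1}$ sign patterns down to exactly $p$ admissible ones per $L$, matching the $p$ values of $k$. The adjunction inequality \cref{thm:immersedadjunction} applied to $F$ forces $L\cdot\alpha = 0$ for every $L\in\calB(X)$, which in turn makes the dimension identity $d(L_k)=d(L)$ consistent. This linear-algebra computation inside $H^2(C_p;\Z)$ and the image of $H^2(B_p;\Z)\to H^2(\partial B_p;\Z)$ is the technical heart of the argument.
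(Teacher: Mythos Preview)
Your approach matches the paper's: realize $X_{(p)}$ as a rational blowdown of $Y=X\#(p-1)\overline{\CP}^2$, invoke \cref{Y:claim:simple:immersed} (with $p=1$, $\alpha\cdot\alpha=0$) to obtain BF blowup simple type so that $\calB(Y)=\{L+\sum_i\epsilon_iE_i:L\in\calB(X),\ \epsilon_i=\pm1\}$, and then transfer to $X_{(p)}$ via the rational blowdown formula.

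The one place your expectation diverges from the paper is the final bookkeeping. You anticipate that the condition ``$c_1(\tilde\fraks)|_{\partial B_p}$ extends over $B_p$'' will cut the $2^{p-1}$ sign patterns down to exactly $p$ admissible ones. The paper instead asserts that \emph{every} $L_J=L+\sum_iJ_iE_i$, restricted to $X_0=Y\setminus\mathring C_p$, extends over $B_p$, and that the unique extension to $H^2(X_{(p)};\Z)$ is $L+(\sum_iJ_i)f$. Thus the $2^{p-1}$ patterns are not filtered by an obstruction but collapse to $p$ distinct classes because the extension depends only on $\sum_iJ_i\in\{-(p-1),-(p-3),\dots,p-1\}=\{2k-(p-1):0\le k\le p-1\}$. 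Correspondingly the paper cites \cref{thm:rbd} (parts (1) and (2), together with the Fintushel--Stern existence of a dimension-matching lift) rather than the uniqueness clause of \cref{cor:rbd}. Your observation that $L\cdot\alpha=0$ (hence $d(L_k)=d(L)$) is exactly what is needed to place each $L_J$ in the equal-dimension case of \cref{thm:rbd}(2), though the paper leaves this implicit.
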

\begin{rem}
Khandhawit--Lin--Sasahira~\cite{KLS} gave a gluing formula of Bauer--Furuta invariants for 
4-manifolds obtained by gluing along 3-manifolds with $b_1 > 0$.
But it is not clear for us whether their formula can completely determine the Bauer--Furuta invariant of $X_{(p)}$.
\end{rem}

\subsection{Constraints on gluing decompositions and BF homogeneous type}
%
%
We discuss  connected-sum decompositions of $4$-manifolds and their generalizations.
Below we consider compact $4$-manifolds with boundary of special kind.

We call a closed connected $3$-manifold $Y$ with $b_1(Y)=0$  \textit{SWF-spherical} for a \Spinc structure $\fraks_Y$ on $Y$ if the Seiberg--Witten Floer homotopy type of $(Y,\fraks_Y)$ has the simplest form, that is, it is represented by a sphere  (\cref{def:SWF-spherical}).
We say the (possibly disconnected) boundary  $Y=\partial X$ of a compact $4$-manifold $X$ is  of \textit{SWF-spherical type} if $b_1(Y)=0$ and each component of $Y$ is SWF-spherical for every \Spinc structure on the component (\cref{def:spherical}).
If a closed $3$-manifold $Y$ with $b_1(Y)=0$ admits a positive scalar curvature metric, then $Y$ is SWF-spherical for every \Spinc structure on $Y$ \cite{ManolescuSWF, Manolescu-gluing}.
It is known that a closed 3-manifold admits a positive scalar curvature metric if and only if it is a connected sum of spherical space forms and copies of $S^1\times S^2$. (See \cite[Theorem 1.29]{Lee}.) 

In the viewpoint of gauge theory, a compact $4$-manifold $X$ with boundary of SWF-spherical type looks like a closed $4$-manifold.
While the relative Seiberg--Witten invariant is in general defined as a class of the monopole Floer homology group, that of $X$ with boundary of SWF-spherical type can be assumed $\Z$-valued. 
Moreover, if every boundary component is $S^3$, then the relative Seiberg--Witten invariants of $X$ coincide with the Seiberg--Witten invariants of the closed manifold obtained by gluing $4$-balls to $X$ along $3$-spheres of the boundary.

\begin{rem}
The previous theorems (from \cref{thm:negdef} to \cref{thm:log}) are generalized to similar results for $4$-manifolds with boundary of SWF-spherical type.
\end{rem}

Let $X$ be a  compact connected $4$-manifold possibly with boundary which is not necessary connected. 
Let  $X_1,\cdots,X_m$  be compact  $4$-manifolds with boundary of SWF-spherical type. 
Each manifold $X_i$ may have two or more boundary components.
We assume these manifolds are glued successively as follows.
Let $\tilde{X}_1=X_1$. 
For $\tilde{X}_i$,  suppose  there is an orientation-reversing diffeomorphism between a boundary component $Y_i$ of $\tilde{X}_i$  and a boundary component $Y_{i+1}$ of $X_{i+1}$.  
Let $\tilde{X}_{i+1}$ be the manifold obtained by gluing $X_{i+1}$ to $\tilde{X}_i$ along $Y_i \cong -Y_{i+1}$.
Finally we obtain a glued connected $4$-manifold $\tilde{X}_m$.
The manifold $\tilde{X}_m$ is denoted by $X_1\cup_{Y_1}X_2\cup_{Y_2}\cdots\cup_{Y_{m-1}} X_m$.
When a $4$-manifold $X$ is diffeomorphic to $X_1\cup_{Y_1}X_2\cup_{Y_2}\cdots\cup_{Y_{m-1}} X_m$, we call it a \textit{gluing decomposition along rational homology $3$-spheres of SWF-spherical type}.
If $m=1$, then we assume $X=X_1$.

It is well-known that a symplectic $4$-manifold with $b_2^+\geq 2$ has no connected sum decomposition into two or more symplectic $4$-manifolds.
The next theorem is a generalization of this fact.
This is proved by using
 \cref{thm:BF-nonvan} stated below which is a direct extension of Bauer's non-vanishing result \cite[Proposition 4.5]{BF2}.
 \begin{theo}\label{thm:decomp}
Suppose $X=X_1\cup_{Y_1}\cdots\cup_{Y_{k-1}}X_k$ $(1\leq k\leq 4)$ is a gluing decomposition along rational homology $3$-spheres of SWF-spherical type with a \Spinc structure $\fraks$ satisfying the following conditions:
\begin{itemize}
\item $b_1(X)=0$, $b_2^+(X_i)\equiv 3$ mod $4$ for each $i$.
\item $d(\fraks_i)=0$ and $\SW(X_i,\fraks_i)$ is odd for each $i$, where $\fraks_i=\fraks|_{X_i}$.
\item If $k=4$ then $b_2^+(X)\equiv 4$ mod $8$.
\end{itemize}
If $X$  admits another gluing decomposition $X^\prime_1\cup_{Y^\prime_1}\cdots\cup_{Y^\prime_{l-1}} X^\prime_{l}$ along rational homology $3$-spheres of SWF-spherical type with $b_2^+(X_j^\prime)\geq 1$ ($j=1,\ldots,l$), then $k\geq l$ and $b_2^+(X^\prime_j)\geq 2$ for every $j$.
\end{theo}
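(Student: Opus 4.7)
The plan is to combine the non-vanishing of $\BF(X,\fraks)$ supplied by \cref{thm:BF-nonvan} with the gluing formula for Bauer--Furuta invariants along SWF-spherical rational homology $3$-spheres (developed earlier in the paper), and then to extract numerical constraints by comparing the two decompositions using BF homogeneous type.

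First, I would verify that the hypotheses of \cref{thm:decomp} are exactly what is needed to apply \cref{thm:BF-nonvan} to the given decomposition $X = X_1 \cup_{Y_1} \cdots \cup_{Y_{k-1}} X_k$: the congruence $b_2^+(X_i) \equiv 3 \pmod 4$ together with $d(\fraks_i)=0$ and $\SW(X_i,\fraks_i)$ odd is precisely what is used in Bauer's $S^1$-equivariant computation to produce a non-nilpotent Hopf-type factor from each piece, and the supplementary condition $b_2^+(X) \equiv 4 \pmod 8$ in the case $k=4$ accounts for the boundary of non-nilpotency (the equivariant analogue of $\eta^4=0$). Hence $\BF(X,\fraks) \neq 0$, and the same computation shows that this class is of BF homogeneous type.

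Second, I would apply the gluing formula to the alternative decomposition $X = X'_1 \cup_{Y'_1} \cdots \cup_{Y'_{l-1}} X'_l$. Since each $Y'_j$ is a rational homology sphere of SWF-spherical type, the relative Bauer--Furuta classes of the pieces $(X'_j,\fraks'_j)$ can be smash-multiplied to recover $\BF(X,\fraks)$. The non-vanishing established in the previous step then forces each relative class to be non-trivial, and in particular $\BF(X'_j,\fraks'_j)$ must contribute a non-zero factor in the appropriate $S^1$-equivariant stable cohomotopy group indexed by $b_2^+(X'_j)$.

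Third, the two constraints come from tracking degrees. A piece with $b_2^+(X'_j)=1$ would produce a factor whose equivariant stable cohomotopy degree is incompatible with the homogeneous degree read off from the first decomposition, forcing every $b_2^+(X'_j) \ge 2$. For the inequality $k \ge l$, the product of $l$ non-trivial factors from $b_2^+\ge 2$ pieces must survive in the stable cohomotopy group of $Pic(X)$; by the nilpotency height bound inherited from Bauer's computation (and encoded in the homogeneous-type condition), this survival fails as soon as $l>k$.

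The main obstacle will be identifying the relative Bauer--Furuta class of a piece with $b_2^+(X'_j)=1$ precisely enough to rule it out inside the equivariant stable stem, since such a piece can have a non-zero closed-manifold BF invariant in general; the argument must use that the gluing takes place along an SWF-spherical rational homology sphere, so that the relative class lives in a specific shifted stem where the required homogeneity fails. A secondary technical point is iterating the gluing formula across boundaries that may consist of several components and tracking the Picard-torus indexing of the stable cohomotopy groups as the successive gluings are performed.
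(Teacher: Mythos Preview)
Your opening moves are correct: \cref{thm:BF-nonvan} gives $\BF(X,\fraks)\neq 0$ with $d(\fraks)=k-1$, and the gluing formula applied to the second decomposition forces each relative class $\BF(X'_j,\fraks|_{X'_j})$ to be nonzero. But both of your concluding arguments have gaps, and in each case the paper's route is both simpler and different from what you sketch.

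\textbf{The bound $k\ge l$.} Your ``nilpotency height bound inherited from Bauer's computation'' cannot be applied to the pieces $X'_j$: Bauer's $\eta$-nilpotency argument governs products of \emph{specific} Hopf-type classes, whereas here you only know that $\BF(X'_j,\fraks|_{X'_j})\neq 0$, with no control over its shape. The paper bypasses this by a pure dimension count, packaged as the invariant $\BFdim$. Since $b_2^+(X'_j)\ge 1$ and $\BF(X'_j,\fraks|_{X'_j})\neq 0$, one has $d(\fraks|_{X'_j})\ge 0$ (a negative virtual dimension makes the monopole map equivariantly null-homotopic). The additivity $d(\fraks)=\sum_j d(\fraks|_{X'_j})+(l-1)$ then gives $k-1=d(\fraks)\ge l-1$. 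No homogeneous-type hypothesis is invoked; your references to ``BF homogeneous type'' belong to the refined \cref{thm:0decomp}, not to this statement.

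\textbf{The bound $b_2^+(X'_j)\ge 2$.} What you flag as the ``main obstacle'' dissolves once you use the hypothesis $b_1(X)=0$. Gluing along rational homology $3$-spheres forces $b_1(X'_j)=0$ for every $j$, and then \cref{prop:b+1} says that a piece with $b_2^+(X'_j)=1$ and $b_1(X'_j)=0$ has \emph{trivial} relative Bauer--Furuta class for every \Spinc structure (the monopole map is a $\UU(1)$-map $(\C^l)^+\to(\R)^+$, which is equivariantly null-homotopic). This immediately contradicts the nonvanishing obtained from the gluing formula. There is no need to analyze degree shifts or the SWF-sphere indexing; your worry that ``such a piece can have a non-zero closed-manifold BF invariant in general'' is unfounded in the present setting precisely because of the $b_1=0$ constraint.
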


In \cref{sec:BFdim}, we introduce the notion of \textit{BF homogeneous} type which is another generalization of  the Seiberg--Witten simple type.
A closed  $4$-manifold or a compact  $4$-manifold $X$ with boundary of SWF-spherical type with $b_2^+(X)\geq 1$ is of \textit{$d$-dimensional BF homogeneous type} if  $\BF(X,\fraks)=0$ whenever $d(\fraks)\neq d$ (\cref{def:BFhom}).

The connected sum of symplectic $4$-manifolds with $b_2^+\geq 2$ is of BF homogeneous type (\cref{cor:sympconn}).
When the components of a gluing decomposition along rational homology $3$-spheres of SWF-spherical type are of $0$-dimensional BF homogeneous type, \cref{thm:decomp} is refined to the next theorem.
\begin{theo}\label{thm:0decomp}
Suppose $X=X_1\cup_{Y_1}\cdots\cup_{Y_{k-1}}X_k$ ($1\leq k \leq 4$) is a gluing decomposition along rational homology $3$-spheres of SWF-spherical type  with a \Spinc structure $\fraks$ satisfying the following conditions:
\begin{enumerate}
\item $b_1(X_i)=0$, $b_2^+(X_i)\equiv 3$ mod $4$ for each $i$.
\item $d(\fraks_i)=0$ and $\SW(X_i,\fraks_i)$ is odd for each $i$, where $\fraks_i=\fraks|_{X_i}$.
\item $X_i$ is of $0$-dimensional BF homogeneous type for each $i$.
\item If $k=4$ then $b_2^+(X)\equiv 4$ mod $8$.
\end{enumerate}
If $X$ admits another gluing decomposition $X^\prime=X^\prime_1\cup_{Y^\prime_1}\cdots\cup_{Y^\prime_{l-1}} X^\prime_{l}$ along rational homology $3$-spheres of SWF-spherical type such that $b_2^+(X_j^\prime)\geq 1$ and $X_j^\prime$ is of $0$-dimensional BF homogeneous type for each $j$, then $k=l$ and $X^\prime$ satisfies the conditions (1) and (4).
Moreover,  there is a \Spinc structure on $X^\prime$ satisfying  (2).
\end{theo}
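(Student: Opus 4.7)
The plan is to verify that the decomposition $X = X'_1 \cup_{Y'_1} \cdots \cup_{Y'_{l-1}} X'_l$ itself satisfies the hypotheses (1), (2), (4) of \cref{thm:decomp} (with some Spin$^c$ structure $\fraks'$), and then apply \cref{thm:decomp} to it with the roles of the two decompositions swapped, to obtain $l \geq k$. Combined with the inequality $k \geq l$ already furnished by \cref{thm:decomp} applied to the given decomposition of $X$, this yields $k = l$. The same initial application also provides $b_2^+(X'_j) \geq 2$ for every $j$, and the Bauer-type nonvanishing \cref{thm:BF-nonvan} that powers that application yields $\BF(X, \fraks) \neq 0$.

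The key input is to feed $\BF(X, \fraks) \neq 0$ into the gluing formula for Bauer--Furuta invariants along rational homology 3-spheres of SWF-spherical type proved earlier in the paper. This produces a smash-product factorization
\[
\BF(X, \fraks) = \BF(X'_1, \fraks|_{X'_1}) \wedge \cdots \wedge \BF(X'_l, \fraks|_{X'_l}),
\]
forcing each factor to be nonzero. The 0-dimensional BF homogeneous hypothesis on $X'_j$ then gives $d(\fraks|_{X'_j}) = 0$ for every $j$, and \cref{thm:BF-SW} identifies $\BF(X'_j, \fraks|_{X'_j})$ with a nonzero integer $\SW(X'_j, \fraks|_{X'_j})$. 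A Mayer--Vietoris argument, combined with $b_1(X) = 0$ and $b_1 = 0$ for each separating rational homology 3-sphere, then yields $b_1(X'_j) = 0$.

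To finish conditions (1) and (2) for the $X'$ decomposition with $\fraks' = \fraks$, one must still show that each $\SW(X'_j, \fraks|_{X'_j})$ is odd and each $b_2^+(X'_j) \equiv 3 \pmod 4$. I would extract these by revisiting Bauer's Pin(2)-equivariant stable cohomotopy computation in reverse: the smash product of $l$ integer-valued relative classes of index-zero type lands in a Pin(2)-equivariant stable stem whose depth is governed by $\sum_j b_2^+(X'_j) = b_2^+(X)$, and its nonvanishing forces both the parity of the integer factors and the mod-4 congruence on each $b_2^+(X'_j)$. Condition (4), if it is nontrivial, is inherited from the hypothesis on $X$, since $X' = X$ and $l = 4$ forces $k = 4$ via $k \geq l$. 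With all hypotheses now verified, \cref{thm:decomp} applied to the $X'$ decomposition gives $l \geq k$.

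The main obstacle is the preceding extraction of oddness of $\SW(X'_j, \fraks|_{X'_j})$ and the mod-4 congruence on $b_2^+(X'_j)$ from sole nonvanishing of a Pin(2)-equivariant smash product. This is a converse-style reading of Bauer's nonvanishing mechanism under the 0-dimensional BF homogeneous hypothesis, and demands careful tracking of the 2-primary structure of the relevant stable stems and of which generator each individual relative BF class hits.
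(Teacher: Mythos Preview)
Your overall strategy would succeed, but both the detour through \cref{thm:decomp} and the ``main obstacle'' you flag are unnecessary. The paper's proof is much shorter and does not invoke \cref{thm:decomp} at all. Instead it uses \cref{prop:hom-conn}: since each $X_i$ is of $0$-dimensional BF homogeneous type (condition (3)), iterating \cref{prop:hom-conn} shows $X$ is of $(k-1)$-dimensional BF homogeneous type; the identical argument for the $X'_j$ shows $X$ is of $(l-1)$-dimensional BF homogeneous type. By \cref{thm:BF-nonvan} we have $\BF(X,\fraks)\neq 0$, so $X$ is not BF trivial, and then \cref{rem:trivial} forces $k-1=l-1$. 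This replaces your two applications of \cref{thm:decomp} in opposite directions.

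More importantly, what you call the ``main obstacle'' --- extracting oddness of $\SW(X'_j,\fraks|_{X'_j})$ and $b_2^+(X'_j)\equiv 3\pmod 4$ from nonvanishing of the smash product --- is exactly the \emph{only if} direction of \cref{thm:BF-nonvan}, which is stated as an equivalence. You have already verified its hypotheses: $b_1(X'_j)=0$ (your Mayer--Vietoris step) and $d(\fraks|_{X'_j})=0$ (your $0$-dimensional BF homogeneous step). So once $l\ge 2$, citing \cref{thm:BF-nonvan} immediately yields all of (1), (2), and (4) for the $X'$ decomposition with $\fraks'=\fraks$; there is no need to revisit the $\mathrm{Pin}(2)$-equivariant computation. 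Thus your plan is sound, but the paper's route via BF homogeneous type is cleaner and bypasses the swap entirely.
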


The following corollary is  a special case of \cref{thm:0decomp}.

\begin{cor}\label{cor:conndecomp}
Suppose closed $4$-manifolds $X_1,\ldots, X_k$ and their connected sum $X=X_1\#\cdots\#X_k$ ($1\leq k \leq 4$) satisfy the following conditions:
\begin{enumerate}
\item $b_1(X_i)=0$, $b_2^+(X_i)\equiv 3$ mod $4$ for each $i$.
\item There is a \Spinc structure $\fraks_i$ such that $d(\fraks_i)=0$ and $\SW(X_i,\fraks_i)$ is odd for each $i$.
\item $X_i$ is of $0$-dimensional BF homogeneous type for each $i$.
\item If $k=4$, then $b_2^+(X)\equiv 4$ mod $8$.
\end{enumerate}
If $X$  is diffeomorphic to another connected sum $X^\prime=X^\prime_1\#\cdots\# X^\prime_{l}$  such that $b_2^+(X_j^\prime)\geq 1$ and $X_j^\prime$ is of $0$-dimensional BF homogeneous type for each $j$, then $k=l$ and $X^\prime$ satisfies the conditions (1), (2) and (4).
\end{cor}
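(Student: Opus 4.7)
The plan is to derive this corollary as a direct specialization of \cref{thm:0decomp}. First I would verify that a connected-sum decomposition is a particular instance of a gluing decomposition along rational homology $3$-spheres of SWF-spherical type. The ``gluing $3$-spheres'' in $X = X_1 \# \cdots \# X_k$ are copies of $S^3$, which is trivially a rational homology $3$-sphere with $b_1 = 0$. Moreover, $S^3$ carries a positive scalar curvature metric (the round metric), so by the paragraph preceding the definition of SWF-spherical type, $S^3$ is SWF-spherical for its unique $\mathrm{spin}^c$ structure. Hence $X = X_1 \cup_{S^3} X_2 \cup_{S^3} \cdots \cup_{S^3} X_k$ qualifies as a gluing decomposition along rational homology $3$-spheres of SWF-spherical type, and similarly for $X'$.

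Next I would assemble the $\mathrm{spin}^c$ structures. Since $S^3$ has a unique $\mathrm{spin}^c$ structure, the $\mathrm{spin}^c$ structures $\fraks_i$ on the summands $X_i$ provided by hypothesis (2) combine uniquely into a single $\mathrm{spin}^c$ structure $\fraks$ on $X$ with $\fraks|_{X_i} = \fraks_i$. Conditions (1)--(4) of \cref{cor:conndecomp} then translate verbatim into conditions (1)--(4) of \cref{thm:0decomp} for the gluing decomposition $X = X_1 \cup_{S^3} \cdots \cup_{S^3} X_k$ with the $\mathrm{spin}^c$ structure $\fraks$.

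Applying \cref{thm:0decomp} with the two gluing decompositions $X = X_1 \cup_{S^3} \cdots \cup_{S^3} X_k$ and $X' = X'_1 \cup_{S^3} \cdots \cup_{S^3} X'_l$ (again using that connected sums are gluings along $S^3$, and that the $X'_j$ are assumed of $0$-dimensional BF homogeneous type with $b_2^+(X'_j) \geq 1$), we immediately obtain $k = l$, together with the statement that $X'$ satisfies conditions (1) and (4) and that there is a $\mathrm{spin}^c$ structure $\fraks'$ on $X'$ with $\fraks'|_{X'_j}$ having vanishing virtual dimension and odd Seiberg--Witten invariant for every $j$. Restricting $\fraks'$ to each summand then produces the $\mathrm{spin}^c$ structures required by condition (2) of the corollary.

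Since every substantive claim is already packaged in \cref{thm:0decomp}, I do not foresee a genuine obstacle; the only point requiring care is the verification that connected sums fall within the class of gluing decompositions considered in the theorem, and this reduces to the two observations that $S^3$ admits a PSC metric and carries a unique $\mathrm{spin}^c$ structure. Once this bookkeeping is in place the corollary follows immediately.
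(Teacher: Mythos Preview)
Your proposal is correct and takes essentially the same approach as the paper, which simply states that \cref{cor:conndecomp} is a special case of \cref{thm:0decomp} without further argument. The only minor imprecision is that the pieces in the gluing decomposition are technically the punctured manifolds $X_i\setminus\mathring{D}^4$ rather than the closed $X_i$; since all relevant invariants agree (as the paper notes for boundaries consisting of copies of $S^3$), this does not affect the argument.
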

\begin{rem}\label{rem:symp}
A closed symplectic $4$-manifold with $b_2^+\geq 2$ satisfies the conditions (2) and (3) by Taubes' non-vanishing theorem \cite{Taubes0} and  \cref{prop:symBFhom}.
\end{rem}

\begin{cor}\label{cor:symp-sum}
Suppose closed symplectic $4$-manifolds $X_1,\ldots, X_k$ and their connected sum $X=X_1\#\cdots\#X_k$ ($1\leq k \leq 4$) satisfy the following conditions:
\begin{enumerate}
\item $b_1(X_i)=0$, $b_2^+(X_i)\equiv 3$ mod $4$ for each $i$.
\item If $k=4$, then $b_2^+(X)\equiv 4$ mod $8$.
\end{enumerate}
If $X$  is diffeomorphic to another connected sum $X^\prime=X^\prime_1\#\cdots\# X^\prime_{l}$ of symplectic manifolds $X^\prime_1,\ldots X^\prime_{l}$, then $k=l$ and  the conditions (1), (2) are satisfied.
\end{cor}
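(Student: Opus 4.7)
The plan is to reduce \cref{cor:symp-sum} to \cref{cor:conndecomp}, using \cref{rem:symp} to supply the symplectic input on both sides of the two decompositions. Recall that \cref{rem:symp} asserts that any closed symplectic $4$-manifold with $b_2^+\ge 2$ carries a \Spinc structure with $d=0$ and odd Seiberg--Witten invariant (by Taubes' non-vanishing theorem), and is of $0$-dimensional BF homogeneous type (by \cref{prop:symBFhom}).

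First, I verify that the given decomposition $X=X_1\#\cdots\#X_k$ satisfies all four hypotheses of \cref{cor:conndecomp}. Conditions (1) and (4) there coincide with conditions (1) and (2) of the present corollary and are assumed. Since each $X_i$ is symplectic with $b_2^+(X_i)\equiv 3\pmod 4$, we have $b_2^+(X_i)\ge 3$, so \cref{rem:symp} immediately supplies condition (2) (a \Spinc structure $\fraks_i$ with $d(\fraks_i)=0$ and $\SW(X_i,\fraks_i)$ odd) and condition (3) ($0$-dimensional BF homogeneous type) of \cref{cor:conndecomp}.

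Next, for the alternative decomposition $X'=X'_1\#\cdots\#X'_l$, each $X'_j$ is symplectic, hence $b_2^+(X'_j)\ge 1$. In the generic case where $b_2^+(X'_j)\ge 2$ for every $j$, a second application of \cref{rem:symp} shows each $X'_j$ is of $0$-dimensional BF homogeneous type, so the hypotheses of \cref{cor:conndecomp} hold for the decomposition of $X'$ as well. The conclusion of \cref{cor:conndecomp} is then precisely that $k=l$ and $X'$ satisfies conditions (1), (2), (4) of \cref{cor:conndecomp}, which are conditions (1), (2) of \cref{cor:symp-sum}.

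The main obstacle is the boundary case where some $X'_j$ has $b_2^+(X'_j)=1$, a regime outside the scope of \cref{rem:symp}. To handle it, I would exploit the mod $4$ (and mod $8$, when $k=4$) constraints on $b_2^+(X)=\sum_j b_2^+(X'_j)$ forced by conditions (1), (2) of \cref{cor:symp-sum}, which severely restrict the admissible $b_2^+$ profiles of the $X'_j$. The remaining profiles can be eliminated by combining Taubes' small-chamber non-vanishing on each $b_2^+=1$ summand with a direct BF-vanishing argument for connected sums incorporating such a summand, in the spirit of \cref{thm:BF-nonvan}; alternatively, one amalgamates each $b_2^+=1$ summand with a neighbor to form a coarser decomposition whose pieces all have $b_2^+\ge 2$, applies \cref{cor:conndecomp} to it, and then uses the resulting rigidity to rule out the amalgamation. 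Either route yields $k=l$ and conditions (1), (2) for $X'$.
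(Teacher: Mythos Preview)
Your reduction to \cref{cor:conndecomp} via \cref{rem:symp} is exactly the paper's (implicit) argument, and your verification of the hypotheses on the $X_i$ side is correct.

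The only issue is your handling of the case $b_2^+(X'_j)=1$, which you over-engineer. None of the mod-$4$ profile analysis, small-chamber Taubes input, or amalgamation tricks are needed. Observe that condition~(1) of \cref{cor:symp-sum} gives $b_1(X)=\sum_i b_1(X_i)=0$, and since $b_1(X)=\sum_j b_1(X'_j)$ we get $b_1(X'_j)=0$ for every $j$. Now if $b_2^+(X'_j)=1$, \cref{prop:b+1} says $\BF(X'_j,\fraks)=0$ for every $\fraks$, so $X'_j$ is of BF trivial type and hence, by \cref{rem:trivial}, of $0$-dimensional BF homogeneous type. If $b_2^+(X'_j)\ge 2$, \cref{prop:symBFhom} gives the same conclusion. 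Either way the hypotheses of \cref{cor:conndecomp} on the $X'_j$ are met, and its conclusion is precisely what you want.
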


We propose a conjecture which extends the simple type conjecture.
\begin{conj}
Suppose that $X$ is a closed smooth oriented $4$-manifold with  
$b_2^+\geq2$.
Then, 
\begin{enumerate}
\item $X$ is of  BF blowup simple type. 
\item $X$ is of  BF homogeneous type.
\end{enumerate}
\end{conj}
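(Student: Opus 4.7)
The conjecture is a significant strengthening of the classical simple type conjecture, which remains open, so I do not expect a complete proof; what follows is a tentative strategy aimed at reducing both parts to concrete topological/homotopical inputs produced earlier in the paper.

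My plan for part (1) is to reduce to the immersed-sphere criterion of \cref{Y:claim:simple:immersed}. If $\BF(X,\fraks)=0$ for every $\spc$ structure then the BF blowup simple type property is vacuous, so I assume there is a basic class $K\in\calB(X)$. The goal is to produce, for every such $X$, a non-torsion class $\alpha\in H_2(X;\Z)$ represented by an immersed $2$-sphere with $p$ positive double points and $\alpha\cdot\alpha=2p-2\geq0$. Since $b_2^+(X)\geq 2$, the intersection form is indefinite on a plane of positive vectors, giving abundant candidate classes $\alpha$ of arbitrary nonnegative self-intersection. The naive approach is to start with a smoothly embedded surface representative, apply sphere-trading to reduce genus, and then carefully count signs of the resulting double points; feeding the output into \cref{Y:claim:simple:immersed} yields the BF blowup simple type.

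For part (2), I plan to combine (1) with the immersed adjunction inequality \cref{thm:immersedadjunction} to pin down a single virtual dimension shared by all basic classes. Concretely, pick a basic class $\fraks_0$ (if none exists, choose any $d$ and the condition is vacuous); for any other basic class $\fraks$ with $\BF(X,\fraks)\neq0$, apply \cref{thm:immersedadjunction} with $K=c_1(\fraks_0)$ and $\alpha=\tfrac{1}{2}(c_1(\fraks)-c_1(\fraks_0))$, represented by an immersed sphere provided by the construction of part (1). The adjunction inequality, together with \cref{thm:blowup} governing how $d$ changes under blowup, should force $d(\fraks)=d(\fraks_0)$ on the nose. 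A complementary angle is to exploit \cref{thm:negdef}(2): arrange, via blowup/blowdown and the generalized formula, that any two basic classes become comparable and must share the same virtual dimension after an equalizing sequence of operations.

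The main obstacle is that both steps implicitly ask for more than the classical SW simple type conjecture already provides. Part (1) needs a universal topological construction of immersed spheres with the specific positive/negative double point signature required by \cref{Y:claim:simple:immersed}, and genus-reduction techniques typically produce mixed-sign double points that \cref{thm:immersedadjunction} cannot exploit directly. Part (2) requires sharper control on the lattice structure of $\calB(X)$ than the adjunction inequality alone supplies, because the inequality is usually satisfied by many dimensions simultaneously. A genuine proof will likely need a new input that is unavailable on the Seiberg--Witten level, for instance a structural theorem for the $\UU(1)$-equivariant stable cohomotopy class $\BF(X,\fraks)\in \pi^0_{S^1}(Pic(X);\lambda)$ that relates its vanishing to the parity and magnitude of $d(\fraks)$ beyond the torsion statement already noted for odd $d$.
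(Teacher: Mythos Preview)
The statement you are addressing is a \emph{conjecture}, not a theorem, and the paper offers no proof or even a proof strategy for it; it is simply posed as an extension of the classical simple type conjecture. You correctly recognize this at the outset, so there is no discrepancy to report between your proposal and the paper's (nonexistent) argument.

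That said, a few remarks on the strategy you sketch. For part (1), the reduction to \cref{Y:claim:simple:immersed} is natural, but the required input---an immersed sphere with $p$ positive double points representing a non-torsion class of square exactly $2p-2$---is a genuine geometric constraint on $X$, not something one can manufacture from $b_2^+\geq 2$ alone. Genus-reduction by finger moves or cusp homotopies produces double points whose signs you cannot control, and the criterion in \cref{Y:claim:simple:immersed} is sensitive precisely to the count of \emph{positive} double points versus the self-intersection. So this route, as you yourself note, would need a new topological theorem. For part (2), the idea of comparing two basic classes via the adjunction inequality applied to $\alpha=\tfrac12(c_1(\fraks)-c_1(\fraks_0))$ is suggestive but underdetermined: $\alpha$ need not be integral, need not be representable by an immersed sphere with controlled double points, and the inequality in \cref{thm:immersedadjunction} gives a one-sided bound rather than the equality $d(\fraks)=d(\fraks_0)$ you want. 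Your closing paragraph accurately identifies these as the real obstacles; the conjecture remains open for the same reasons the classical one does.
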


The organization of the paper is as follows.
In Section~\ref{sec:blowup}, we review the Bauer--Furuta invariant and prove \cref{thm:negdef,thm:vanishing,thm:blowup,thm:rbd,thm:immersedadjunction}.
In Section~\ref{sec:BFdim}, we introduce the notions of BF dimension and BF homogeneous type, and prove \cref{thm:decomp,thm:0decomp}.
In Section~\ref{Y:sec:blow-up}, we introduce the notion of BF blowup simple type, and prove \cref{Y:claim:simple:immersed,thm:log}.
In Section~\ref{cohomotopy}, we discuss the stable cohomotopy groups of complex projective spaces, and prove some results used in the proof of \cref{thm:vanishing}.
Furthermore, we give an alternative proof of \cite[Lemma 3.5]{BF} which determines the Hurewicz maps.

\subsection*{Acknowledgements}

The authors were supported in part by JSPS KAKENHI Grant Numbers 23K22394 (Kato), 22K03284 (Kishimoto),   24K06716 (Nakamura)  and 23K03090 (Yasui).

\section{A generalization of the blowup formula and its applications}\label{sec:blowup}
 \subsection{Review of the Bauer--Furuta invariants}\label{subsec:BF}
 Let $Y$ be a closed $3$-manifold with \Spinc structure $\fraks_Y$.
If $b_1(Y)=0$, then  the Seiberg--Witten Floer homotopy type $\SWF(Y,\fraks_Y)$   is defined as an isomorphism class of an object in a certain $\UU(1)$-equivariant Spanier--Whitehead category $\frak{C}$ \cite{ManolescuSWF, Manolescu-gluing}.

\begin{defi}[\textit{Cf.}{\cite[Definition 3.34]{KMT}}]\label{def:SWF-spherical}
We say that a closed $3$-manifold $Y$ is \textit{SWF-spherical} for a \Spinc structure $\fraks_Y$ if $\SWF(Y,\fraks_Y)$ is represented by a sphere $S^{n\C}$ for some $n$.
\end{defi}

For a SWF-spherical $(Y,\fraks_Y)$, we can represent $\SWF(Y,\fraks_Y)$  by 
\[
\SWF(Y,\fraks_Y)=S^{-n(Y,\fraks_Y,g_Y)\C},
\]
where 
\[
n(Y,\fraks_Y,g_Y) = \ind_{\C} D(X,\fraks)-\frac18(c_1(\fraks)^2-\sign(X)),
\]
$X$ is a compact $4$-manifold with $\partial X=Y$, $D(X,\fraks)$ is a Dirac operator on a \Spinc strcuture on $X$ with $\fraks|_{\partial X}=\fraks_Y$, and $\ind_{\C} D(X,\fraks)$ is the index of $D(X,\fraks)$ with spectral boundary condition in the sense of Atiyah-Patodi-Singer \cite{APS1}.

\begin{defi}\label{def:spherical}
Let $X$ be a compact $4$-manifold with  possibly disconnected boundary $Y=\partial X$. 
We say that  the boundary $Y$ is of \textit{SWF-spherical type} if $b_1(Y)=0$ and each component of $Y$ is SWF-spherical for every \Spinc structure on the component.
\end{defi}

Let $X$  be a compact oriented $4$-manifold with boundary of SWF-spherical type.
Let $Y=\partial X=Y_1\cup\cdots \cup Y_k$.
Let $\fraks$ be a \Spinc structure on $X$ and $\fraks_Y=\fraks|_Y$.
Then $\SWF(Y,\fraks_Y)=\bigwedge_{i=1}^k\SWF(Y_i,\fraks|_{Y_i})$.
The Dirac operators associated with $\fraks$ difine a virtual index  bundle  $\Ind D$ over the  torus $Pic(X)=H^1(X;\R)/H^1(X;\Z)$.
Let $\lambda$ be the class of the $KO$-group given by 
\[
\lambda = \Ind D -\left[\underline{H^+(X;\R)}\right]\in KO_{S^1}(Pic(X)),
\]
where $\underline{H^+(X;\R)}$ is the trivial bundle over $Pic(X)$ with fiber $H^+(X;\R)$.
The relative Bauer--Furuta  invariant $\BF(X,\fraks)$ of $(X,\fraks)$ is defined as a  $\UU(1)$-equivariant stable morphism between objects in the cagegory $\frak{C}$, and it is represented by a $\UU(1)$-map,
\[
\Psi_{(X,\fraks)}\colon TF \to S^V\wedge \Sigma^*\SWF(Y,\fraks_Y),
\]
where $TF$ is the Thom space of a vector bundle over $Pic(X)$ and $V$ is a vector space such that 
$
\lambda=F-\underline{V}.
$
An important property of $\Psi_{(X,\fraks)}$ is that the restriction of $\Psi_{(X,\fraks)}$ to the $\UU(1)$-fixed point sets is induced from a fiberwise linear inclusion.
(See \cite{ManolescuSWF, Manolescu-gluing}.)
Since $\SWF(Y,\fraks_Y)$ is a sphere in this case, $\BF(X,\fraks)$ can be considered as a class in a  $\UU(1)$-equivariant stable cohomotopy group $\pi^0_{\UU(1)}(Pic(X);\lambda)$.
Therefore many results of Bauer and Furuta \cite{BF,BF2,B_survey} on closed $4$-manifolds are straightforwardly generalized to the case of the relative Bauer--Furuta invariants of $(X,\fraks)$ as above.

\begin{theo}[{\cite[Theorem 1.1]{BF}}]\label{thm:BF-SW}
If $b_2^+(X)>b_1(X)+1$, then a homology orientation determines a homomorphism $\pi^0_{\UU(1)}(Pic(X);\lambda)\to \Z$ which maps $\BF(X,\fraks)=[\Psi_{(X,\fraks)}]$ to the integer valued relative Seiberg--Witten invariant $\SW(X,\fraks)$.
\end{theo}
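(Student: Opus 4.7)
The plan is to construct the homomorphism $\pi^0_{\UU(1)}(Pic(X);\lambda)\to\Z$ via an $S^1$-equivariant degree construction on the Bauer--Furuta model, and then to identify its value on $\BF(X,\fraks)$ with the signed count of Seiberg--Witten monopoles. Represent a class in $\pi^0_{\UU(1)}(Pic(X);\lambda)$ by an $S^1$-equivariant map $\Psi\colon TF\to S^V$ where $F$ is a finite-rank complex bundle over $Pic(X)$ approximating $\Ind D$ (with $S^1$ acting on fibers by scalar multiplication) and $V$ is a real $S^1$-trivial vector space approximating $H^+(X;\R)$. The hypothesis $b_2^+>b_1+1$ translates to $\dim V>\dim Pic(X)+1$, which is the codimension condition needed to avoid the $S^1$-fixed locus by transversality.

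First I would define the homomorphism. The zero section $Pic(X)_+\subset TF$ is the $S^1$-fixed set, and $\Psi$ restricts there to a fiberwise linear inclusion into $S^V$; by $S^1$-equivariant transversality, a generic $v_0\in S^V$ off the image of this linear inclusion is a regular value of $\Psi$, and the dimension hypothesis guarantees such $v_0$ exists. The preimage $\Psi^{-1}(v_0)$ lies in the $S^1$-free part of $TF$ and is a compact $S^1$-invariant submanifold whose quotient by the free $S^1$-action has dimension $d(\fraks)$. When $d(\fraks)=0$ this is a finite set of signed points whose count, with signs fixed by the homology orientation, is the desired integer; in general one pushes forward to $H_{d(\fraks)}(Pic(X);\Z)$ and evaluates against a product of $\mu$-classes to recover the usual Seiberg--Witten numerical invariants. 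A standard $S^1$-bordism argument shows independence of all choices, so the prescription is a well-defined homomorphism.

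Next I would match this integer to $\SW(X,\fraks)$. By construction, $\Psi$ is an $S^1$-equivariant finite-dimensional approximation of the Seiberg--Witten monopole map, obtained from the Seiberg--Witten equations by truncating to finitely many spectral modes in the spinor and self-dual two-form directions. The choice of $v_0\in S^V$ amounts to a generic self-dual two-form perturbation of the Seiberg--Witten equations; the condition $b_2^+>b_1+1$ is precisely what makes such a perturbation able to avoid reducibles. By the inverse function theorem, $\Psi^{-1}(v_0)/S^1$ is in bijection with the perturbed Seiberg--Witten moduli space, and tracing through the identification $\lambda=\Ind D-\underline{H^+(X;\R)}$ identifies the two signed counts.

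The principal obstacle is reconciling the two orientation conventions: the topological $S^1$-equivariant degree uses the canonical complex orientation of $F$ together with a chosen orientation on $V$, while the Seiberg--Witten sign is determined by the determinant line of the deformation complex. Carrying out this comparison carefully is essentially the content of \cite[Lemma~3.5]{BF}, which the paper revisits in Section~\ref{cohomotopy}.
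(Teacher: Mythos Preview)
The paper does not give its own proof of this statement: \cref{thm:BF-SW} is quoted from \cite[Theorem~1.1]{BF} without argument, as part of the review in \cref{subsec:BF}. So there is nothing to compare against directly.

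That said, your sketch follows the standard Bauer--Furuta argument and is essentially correct in outline: represent the class by an $S^1$-map $\Psi\colon TF\to S^V$, use the hypothesis $b_2^+>b_1+1$ to find regular values in $S^V$ off the image of the fixed-point set (and to connect any two such by a path still avoiding that image, so that the count is well-defined), take the $S^1$-quotient of the preimage, and identify this with the perturbed moduli space. One small imprecision: the $S^1$-fixed locus of $TF$ is not just the zero section $Pic(X)_+$ unless $F$ has been arranged to be purely complex; in the general stabilized model $F$ has a real summand as well, and the fixed-point map is the Thom map of a real subbundle into $S^V$, not literally a ``fiberwise linear inclusion'' from $Pic(X)_+$.

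There is, however, a concrete error in your final paragraph. \cite[Lemma~3.5]{BF} is the computation of the stable cohomotopy groups $\pi^i(\CP^{l-1})$, and \cref{cohomotopy} of the present paper gives an alternative proof of exactly that computation via cell structures of stunted projective spaces. Neither is about comparing orientation conventions; the orientation comparison you need is carried out in \cite{BF} but not under that lemma number, and \cref{cohomotopy} is not relevant to it. You should remove that cross-reference or replace it with the correct one.
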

\begin{theo}[{\cite[Proposition 3.4]{BF}}]
If $b_1(X)=0$ and $b_2^+(X)\geq 2$, then $\BF(X,\fraks)$ can be considered as an element of the (non-equivariant) stable cohomotopy group $\pi^{b_2^+(X)-1}(\CP^{l-1})$, where $l=\ind_\C D(X,\fraks)$.
\end{theo}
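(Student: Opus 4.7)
The plan is to exploit the fact that the hypothesis $b_1(X)=0$ makes the Picard torus $Pic(X)$ a single point, so that $\lambda = \Ind D - [\underline{H^+(X;\R)}] \in KO_{\UU(1)}(Pic(X))$ reduces to a virtual $\UU(1)$-representation. Since $\UU(1)$ acts on the complex spinor bundle by scalar multiplication, the Dirac index contributes the $l$-fold standard representation $l\cdot\C$ (after suitable stabilisation), while the self-dual harmonic forms contribute the trivial representation $b_2^+(X)\cdot\R$. I would first make the identification $\lambda=l\cdot\C-b_2^+(X)\cdot\R$ precise, which presents $\BF(X,\fraks)$ as a stable $\UU(1)$-equivariant homotopy class of a pointed map
\[
\Psi\colon S^{b_2^+(X)\R}\longrightarrow S^{l\C},
\]
with trivial $\UU(1)$-action on the source and the standard diagonal action on the target.

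Next I would invoke the defining linearity property of the Bauer--Furuta invariant on $\UU(1)$-fixed points. The restriction $\Psi^{\UU(1)}\colon S^{b_2^+(X)}\to (S^{l\C})^{\UU(1)}=S^0$ is induced by a linear inclusion $\R^{b_2^+(X)}\to (l\C)^{\UU(1)}=0$, and is therefore the constant map to the basepoint. Hence, after sufficient equivariant suspension and a transversality perturbation, one may assume that the image of $\Psi$ avoids the non-basepoint $\UU(1)$-fixed point $0\in S^{l\C}$.

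The heart of the argument is to convert this equivariant datum into a non-equivariant one. I would use the $\UU(1)$-equivariant cofibre sequence
\[
(S^{l\C})^{\UU(1)}=S^0\hookrightarrow S^{l\C}\twoheadrightarrow S^{l\C}/S^0
\]
together with the fact that $S^{l\C}/S^0$ carries a free $\UU(1)$-action away from its basepoint. The principal $\UU(1)$-bundle $\C^l\setminus\{0\}\to\CP^{l-1}$ then realises the $\UU(1)$-quotient of this free locus as a space built from $\CP^{l-1}$, and the standard correspondence between $\UU(1)$-equivariant stable maps from a trivially-acted source to a free $\UU(1)$-target and non-equivariant stable maps to the $\UU(1)$-quotient converts $\Psi$ into a non-equivariant stable cohomotopy class. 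After a Spanier--Whitehead manipulation that both switches the role of source and target and absorbs the $S^1$-fibre into a degree shift, this class lies in $\pi^{b_2^+(X)-1}(\CP^{l-1})$.

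The main obstacle I anticipate is this last step: carefully carrying out the degree bookkeeping that converts stable equivariant maps $S^{b_2^+(X)}\to S^{l\C}$ into stable non-equivariant maps $\CP^{l-1}\to S^{b_2^+(X)-1}$. One must correctly pair the Thom-space structure of the tautological line bundle over $\CP^{l-1}$ with Spanier--Whitehead duality to arrive at the precise degree $b_2^+(X)-1$ and at $\CP^{l-1}$ itself rather than one of its various Thom-space avatars. Verifying the identification through the finite-dimensional approximations $E_N\UU(1)\times_{\UU(1)} S^{l\C}$, which yield spaces directly related to $\CP^{l-1}$, would be a natural way to pin down the correct identification.
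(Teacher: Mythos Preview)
The paper does not prove this statement itself; it is quoted from \cite[Proposition~3.4]{BF}. So the comparison below is against the original Bauer--Furuta argument, whose skeleton you have correctly identified: the cofibre sequence attached to the fixed-point inclusion in $S^{l\C}$, the free $\UU(1)$-action on $S(\C^l)$ with quotient $\CP^{l-1}$, and the passage to the nonequivariant category.

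There is, however, a genuine gap: you have the direction of the monopole map reversed. With $b_1(X)=0$ and $b=b_2^+(X)$, the paper's conventions give $\lambda=l\,\C-b\,\R$ and a representative $\Psi\colon TF\to S^V$ with $F-V=\lambda$, hence stably
\[
\Psi\colon S^{l\C}\longrightarrow S^{b\R},
\]
not $S^{b\R}\to S^{l\C}$; the spinor index lives in the \emph{source} and the self-dual forms in the \emph{target} (compare the $b_2^+=0$ lemma in the paper, where the map is $(\C^{m+k}\oplus\R^n)^+\to(\C^m\oplus\R^n)^+$). The fixed-point restriction is then $\Psi^{\UU(1)}\colon S^0\to S^b$, the compactification of the linear inclusion $0\hookrightarrow\R^b$; your phrase ``linear inclusion $\R^{b}\to(l\C)^{\UU(1)}=0$'' describes no inclusion unless $b=0$. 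For $b\ge 1$ this restriction is null, so $\Psi$ factors through the cofibre $S^{l\C}/S^0\simeq\Sigma S(\C^l)_+$, and since $\UU(1)$ acts freely on $S(\C^l)$ and trivially on $S^b$ one obtains directly
\[
\{\Sigma S(\C^l)_+,\,S^b\}_{\UU(1)}\;\cong\;\{\Sigma(\CP^{l-1})_+,\,S^b\}\;\cong\;\pi^{b-1}\bigl((\CP^{l-1})_+\bigr),
\]
which for $b\ge 2$ is $\pi^{b-1}(\CP^{l-1})$. With your reversed direction the transversality step fails outright: a $\UU(1)$-map from a space with trivial action into $S^{l\C}\setminus\{0\}$ must land in the fixed locus, hence at the basepoint, so unstably every such map is constant. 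The closing appeal to Spanier--Whitehead duality to ``switch source and target'' is a symptom of the reversal rather than a repair; no such step is needed once $\Psi$ points the right way.
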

\begin{rem}
The cohomotopy groups $\pi^{i}(\CP^{l-1})$ are computed in \cite[Lemma 3.5]{BF}. 
An elementary proof of \cite[Lemma 3.5]{BF} is given in \cref{cohomotopy}.
\end{rem}
Let 
\[
d(\fraks)=2\ind_\C D(X,\fraks)-(1-b_1(X)+b_2^+(X)).
\]
When $X$ is closed, this  is equal to $d(\fraks)$ in \eqref{eq:SWdim}. 
Let $X=X_1\cup_{Y_1}X_2\cup_{Y_2}\cdots\cup_{Y_{m-1}} X_m$ be a gluing decomposition along rational homology $3$-spheres of SWF-spherical type.
Let $\fraks$ be a \Spinc structure on $X$ and $\fraks_i=\fraks|_{X_i}$.
Then, by the gluing formula due to Manolescu \cite{Manolescu-gluing} (see also \cite{Tirasan, Sasahira, KLS}), the Bauer--Furuta invariant of $(X,\fraks)$ is given by
\begin{equation}\label{eq:BFgluing}
\BF(X,\fraks)=[\Psi_{(X_1,\fraks_1)}\wedge\cdots\wedge\Psi_{(X_m,\fraks_m)}].
\end{equation}
Bauer's result \cite[Proposition 4.5]{BF2} for connected sums is straightforwardly extended to gluing decompositions along rational homology $3$-spheres of SWF-spherical type.
\begin{theo}[{\cite[Proposition 4.5]{BF2}}]\label{thm:BF-nonvan}
For $(X,\fraks)$ as above,
suppose $m\geq 2$, $b_1(X_i)=0$  and $d(\fraks_i)=0$ for $i=1,\ldots,m$.
Then $\BF(X,\fraks)\neq 0$ if and only if the following conditions are satisfied:
\begin{itemize}
\item $b_2^+(X_i)\equiv 3$ mod $4$ for each $i$.
\item $\SW(X_i,\fraks_i)$ is odd for each $i$.
\item $m\leq4$. Moreover,   $b_2^+(X)\equiv 4$ mod $8$, if $m=4$.
\end{itemize}
Furthermore $(X,\fraks)$ satisfies Condition ($\ast$).
\end{theo}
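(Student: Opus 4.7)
The plan is to follow Bauer's original argument in \cite[Proposition 4.5]{BF2} and adapt it to the present setting of gluing decompositions along rational homology $3$-spheres of SWF-spherical type, using the gluing formula \eqref{eq:BFgluing} as the replacement for Bauer's use of the connected-sum formula along $S^3$.

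First, since every boundary component $Y_i$ is SWF-spherical and has $b_1(Y_i)=0$, the Floer spectrum of $(Y_i,\fraks|_{Y_i})$ is an equivariant sphere $S^{-n_i\C}$, so the gluing formula \eqref{eq:BFgluing} presents $\BF(X,\fraks)$ as the smash product class $[\Psi_{(X_1,\fraks_1)}\wedge\cdots\wedge\Psi_{(X_m,\fraks_m)}]$ of $\UU(1)$-equivariant maps between spheres. Using $b_1(X_i)=0$ and $d(\fraks_i)=0$, each factor $\Psi_{(X_i,\fraks_i)}$ represents a class in a stable cohomotopy group of the form $\pi^{b_2^+(X_i)-1}(\CP^{l_i-1})$ with $l_i=\ind_{\C}D(X_i,\fraks_i)$, whose fixed-point restriction is a degree computation encoding $\SW(X_i,\fraks_i)$ via \cref{thm:BF-SW}.

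Second, the non-vanishing analysis is purely a calculation in the $\UU(1)$-equivariant stable homotopy category. Using the structure of $\pi^\ast(\CP^{l-1})$ recalled in \cref{cohomotopy}, reduce $\BF(X_i,\fraks_i)$ modulo $2$: the non-fixed-point part is detected by a secondary (Hopf-type) invariant whose non-triviality requires exactly $b_2^+(X_i)\equiv 3\pmod 4$, while oddness of $\SW(X_i,\fraks_i)$ is equivalent to non-triviality of the fixed-point part mod $2$. Smashing $m$ such classes produces a product in a cohomotopy group of $\CP^{l-1}$, $l=\sum l_i$, which corresponds after mod-$2$ reduction to a product $\eta^{m-1}\cdot(\text{unit})$ in the appropriate stable stem. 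Since $\eta^3\neq0$ but $\eta^4=0$ stably, this product survives precisely when $m\leq 4$, with the fine constraint $b_2^+(X)\equiv 4\pmod 8$ in the critical dimension $m=4$ coming from the bidegree of $\eta^3$ and of the attaching map in $\CP^{l-1}$.

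Third, to verify Condition $(\ast)$: $b_1(X)=0$ and $b_2^+(X)\geq 2$ follow from $b_1(X_i)=0$, the rational-homology-sphere hypothesis on the $Y_i$'s, and $b_2^+(X_i)\geq 3$ (forced by $b_2^+(X_i)\equiv 3\pmod 4$). The bound $d(\fraks)\leq 3$ is read off from the target cohomotopy group, namely $\pi^{b_2^+(X)-1}(\CP^{l-1})$ with $2l-1-b_2^+(X)=d(\fraks)+b_1(X)=d(\fraks)$, and the explicit computation in \cref{cohomotopy} shows these groups are torsion in the relevant range $d(\fraks)>0$, giving the torsion conclusion.

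The principal obstacle is verifying that Bauer's equivariant stable-homotopy computation, which was performed in the connected-sum setting where each summand has an $S^3$ boundary, transfers verbatim to the SWF-spherical boundary case. The resolution is that SWF-sphericality is precisely the property ensuring the Floer homotopy types entering the smash product in \eqref{eq:BFgluing} are equivariant spheres $S^{-n_i\C}$; these only contribute dimension shifts to the index $l_i$, leaving the combinatorial structure of the smash product (and therefore the mod-$2$ Hopf-invariant calculation determining non-vanishing) identical to Bauer's. Once this structural identification is in place, every assertion of the theorem, including Condition $(\ast)$, follows by the same argument as for \cite[Proposition 4.5]{BF2}.
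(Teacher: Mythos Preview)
The paper does not actually prove this theorem: it merely states it as a citation of Bauer's result \cite[Proposition~4.5]{BF2}, prefaced by the remark that ``Bauer's result for connected sums is straightforwardly extended to gluing decompositions along rational homology $3$-spheres of SWF-spherical type.'' Your proposal is therefore more detailed than what the paper offers, and it correctly identifies the single structural point needed for the extension: SWF-sphericality guarantees that the Floer spectra entering the gluing formula \eqref{eq:BFgluing} are equivariant spheres, so the smash product $[\Psi_{(X_1,\fraks_1)}\wedge\cdots\wedge\Psi_{(X_m,\fraks_m)}]$ has exactly the same form as in Bauer's connected-sum setting, and his equivariant stable-homotopy computation carries over verbatim.

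One small imprecision in your verification of Condition~$(\ast)$: you write that the cohomotopy groups $\pi^{b_2^+(X)-1}(\CP^{l-1})$ ``are torsion in the relevant range $d(\fraks)>0$,'' but this is false for $d(\fraks)=2$ (i.e.\ $m=3$), where \cref{k=2} shows the group has a free $\Z$ summand. What is true is that $\BF(X,\fraks)$ lies in the torsion subgroup: the free part is detected by the Hurewicz map, which by \cref{thm:BF-SW} computes $\SW(X,\fraks)$, and the smash-product structure forces this to vanish (as in Bauer's argument). Also, the bound $d(\fraks)\le 3$ follows more directly from the additivity $d(\fraks)=\sum_i d(\fraks_i)+(m-1)=m-1$ together with $m\le 4$, rather than from inspecting the target group.
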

\begin{cor}
For $X=X_1\cup_{Y_1}\cdots\cup_{Y_{m-1}} X_m$ as above,
suppose  that each $X_i$ is of the Seiberg--Witten simple type and $b_1(X_i)=0$.
Then, for every \Spinc structure $\fraks$ with  $\BF(X,\fraks)\neq 0$, $(X,\fraks)$ satisfies Condition ($\ast$).
\end{cor}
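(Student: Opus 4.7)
The plan is to reduce the corollary to \cref{thm:BF-nonvan} by a case analysis on the virtual dimensions $d(\fraks_i)$. First, observe that $b_1(X)=0$ follows from a Mayer--Vietoris argument: each $X_i$ has $b_1(X_i)=0$ by hypothesis, and each gluing $3$-manifold $Y_j$ is a rational homology $3$-sphere (since SWF-spherical type requires $b_1(Y_j)=0$). Applying the gluing formula \eqref{eq:BFgluing},
\[
\BF(X,\fraks)=[\Psi_{(X_1,\fraks_1)}\wedge\cdots\wedge\Psi_{(X_m,\fraks_m)}],
\]
so the assumption $\BF(X,\fraks)\neq 0$ forces $\BF(X_i,\fraks_i)\neq 0$ for every $i$.

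For each $i$, since $X_i$ is of Seiberg--Witten simple type, exactly one of two alternatives must occur: either (a) $d(\fraks_i)=0$, or (b) $d(\fraks_i)\neq 0$. In case (b), simple type yields $\SW(X_i,\fraks_i)=0$, and hence by \cref{thm:BF-SW} the class $\BF(X_i,\fraks_i)$ lies in the torsion subgroup of its ambient cohomotopy group.

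If alternative (a) holds for every $i$, then the hypotheses of \cref{thm:BF-nonvan} are exactly met, and that theorem directly delivers Condition $(\ast)$ for $(X,\fraks)$. If, instead, alternative (b) holds for at least one index $i_0$, then $\BF(X_{i_0},\fraks_{i_0})$ is torsion and, by the multiplicativity of torsion under the smash product in equivariant stable cohomotopy, $\BF(X,\fraks)$ is itself a torsion element. This handles the ``torsion when $d(\fraks)>0$'' clause of Condition $(\ast)$. The remaining inequalities $b_2^+(X)\geq 2$ and $d(\fraks)\leq 3$ would be extracted from the structure of the torsion subgroup of $\pi^{b_2^+(X)-1}(\CP^{l-1})$, where $l=\ind_{\C}D(X,\fraks)$: the computations in \cref{cohomotopy} (originally \cite[Lemma 3.5]{BF}) restrict the pairs $(b_2^+(X),l)$ for which a non-zero torsion class exists, and the identity $d(\fraks)=2l-1-b_2^+(X)$ (valid when $b_1(X)=0$) translates this restriction into the desired bounds.

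The hardest step is the last one: extracting $d(\fraks)\leq 3$ from the torsion structure of stable cohomotopy in the case where some $\BF(X_{i_0},\fraks_{i_0})$ is torsion. This requires combining the additivity of $d$ under APS gluing along rational homology $3$-spheres of SWF-spherical type with the cohomotopy-theoretic constraints cited above, mirroring the dimensional argument in Bauer's original \cite[Proposition 4.5]{BF2} and its extension in \cref{thm:BF-nonvan}.
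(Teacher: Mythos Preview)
The paper states this corollary immediately after \cref{thm:BF-nonvan} without supplying a proof, so there is no argument of the paper's to compare against; one is left to judge your proposal on its own merits.

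Your reduction to a dichotomy on the $d(\fraks_i)$ is natural, and case (a) is handled correctly: when every $d(\fraks_i)=0$ and $m\ge 2$, \cref{thm:BF-nonvan} applies verbatim, while the residual case $m=1$ is immediate since then $d(\fraks)=d(\fraks_1)=0$ and $b_2^+(X)=b_2^+(X_1)\ge 2$ (the latter being implicit in the simple-type hypothesis).

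The gap is in case (b). You correctly observe that if $d(\fraks_{i_0})\neq 0$ then Seiberg--Witten simple type gives $\SW(X_{i_0},\fraks_{i_0})=0$, whence $\BF(X_{i_0},\fraks_{i_0})$ lies in the kernel of the Hurewicz map of \cref{thm:BF-SW} and is therefore torsion; bilinearity of the smash product then makes $\BF(X,\fraks)$ torsion too. That secures the last clause of Condition~($\ast$) and, together with $b_2^+(X)=\sum_i b_2^+(X_i)\ge 2$, leaves only the bound $d(\fraks)\le 3$ to verify. Here your argument stops: you assert that this ``would be extracted'' from the torsion structure of $\pi^{b_2^+(X)-1}(\CP^{l-1})$, but the very computations you invoke show that this cannot work unaided. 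For instance \cref{k=5} gives $\pi^{2n-5}(\CP^n)\cong\Z/(24,n)$ for $n$ even, so there is no group-theoretic obstruction to a nonzero torsion class at $d(\fraks)=5$. Nor does ``mirroring the dimensional argument in Bauer's \cite[Proposition~4.5]{BF2}'' help: that argument is predicated on $d(\fraks_i)=0$ for every $i$, which is exactly what fails in case (b). To close the gap you would need either an independent reason why case (b) cannot occur, or a finer analysis showing that smash products with a torsion factor of the relevant shape must vanish beyond $d=3$; neither is supplied.
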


Let $N$ be a compact oriented $4$-manifold with $b_1(N)=0$ and $b_+(N)=0$ whose boundary is of SWF-spherical type.
For a \Spinc structure on $N$, the relative Bauer--Furuta invariant on $(N,\fraks)$ 
is represented by a $\UU(1)$-equivariant map
\[
\Psi_{(N,\fraks)}\colon (\C^{m+l}\oplus\R^n)^+ \to (\C^m\oplus\R^{n})^+,
\]
where $l=\ind_{\C} D(N,\fraks)$ and $m$, $n$ are some integers.
As mentioned above, the restriction of $\Psi_{(N_i,\fraks)}$ to the $\UU(1)$-fixed point sets is induced from a linear inclusion, and therefore a degree-one map.
Then we may assume $\Psi_{(N_i,\fraks)}$ itself is induced from an inclusion by the following lemma.
\begin{lem}[{\cite[Proposition 4.1]{BF2}}]\label{lem:inclusion}
Let $f\colon (\C^{m+k}\oplus\R^n)^+ \to (\C^m\oplus\R^{n})^+$ be an $\UU(1)$-equivariant map such that the induced map on the fixed point sets has degree 1.
Then $k\leq0$ and $f$ is $\UU(1)$-homotopic to the inclusion.
\end{lem}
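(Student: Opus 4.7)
The plan is to split the proof into two parts: the inequality $k\leq 0$ and the $\UU(1)$-homotopy uniqueness of $f$. Write $V=\C^{m+k}\oplus\R^n$ and $W=\C^m\oplus\R^n$, and denote the one-point compactifications by $S^V$ and $S^W$. The $\UU(1)$-fixed subspaces are $V^{\UU(1)}=W^{\UU(1)}=\R^n$, so the fixed sets of $S^V$ and $S^W$ are both copies of $S^n$, and by hypothesis the induced map $f^{\UU(1)}\colon S^n\to S^n$ has degree one.

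For the inequality $k\leq 0$, I would use $\UU(1)$-equivariant Borel cohomology, with coefficient ring $H^*_{\UU(1)}(\mathrm{pt})=\Z[u]$, $|u|=2$. By the equivariant Thom isomorphism, $\tilde H^*_{\UU(1)}(S^V)$ is a free rank-one $\Z[u]$-module generated by a Thom class $U_V$ in degree $2(m+k)+n$, and likewise $\tilde H^*_{\UU(1)}(S^W)=\Z[u]\cdot U_W$ with $|U_W|=2m+n$. The restriction to the fixed sphere sends $U_V$ to $e(V/V^{\UU(1)})\cdot U_{S^n}=u^{m+k}\cdot U_{S^n}$ and $U_W$ to $u^m\cdot U_{S^n}$. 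Writing $f^*(U_W)=a\cdot U_V$ with $a\in H^{-2k}_{\UU(1)}(\mathrm{pt})$, the commutativity of the restriction square combined with $(f^{\UU(1)})^*=\mathrm{id}$ yields the identity $a\cdot u^{m+k}=u^m$ in $\Z[u]$. Since $\Z[u]$ has no elements of negative degree, this forces $a=u^{-k}$ and hence $k\leq 0$.

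Once $k\leq 0$, the coordinate embedding $\C^{m+k}\hookrightarrow\C^m$ extended by the identity on $\R^n$ gives a $\UU(1)$-equivariant inclusion $j\colon S^V\hookrightarrow S^W$ with $j^{\UU(1)}=\mathrm{id}_{S^n}$. To show $f\simeq j$, I would apply equivariant obstruction theory. The $\UU(1)$-CW structure on $S^V$ has only two orbit types---fixed cells forming $S^n$ and free $\UU(1)$-cells above---so after an initial equivariant homotopy one may assume $f$ agrees with $j$ on $S^n$. The obstructions to extending a $\UU(1)$-homotopy from $S^n$ to all of $S^V$ lie in Bredon cohomology groups of $(S^V,S^n)$ with coefficients in relative equivariant homotopy groups of $(S^W,S^n)$. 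The complement $S^W\setminus S^n$ deformation retracts onto the unit sphere $S(\C^m)\simeq S^{2m-1}$, which is $(2m-2)$-connected, while the free orbit cells of $S^V$ have orbit space of real dimension at most $2(m+k)-1$; the inequality $k\leq 0$ then places every obstruction in a vanishing range, so $f$ and $j$ are $\UU(1)$-homotopic.

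The main obstacle is the second step: one must set up the equivariant obstruction theory with the correct local coefficient system and carefully verify the connectivity estimate for each cell. A cleaner alternative, should the obstruction bookkeeping prove fussy, is to invoke the tom Dieck splitting or a Segal-type decomposition for $\UU(1)$-equivariant cohomotopy of representation spheres, which identifies $[S^V,S^W]_{\UU(1)}$ with the fixed-point degree together with a free-orbit correction valued in a non-equivariant stable cohomotopy group of a complex projective space; the assumption $k\leq 0$ then forces this extra invariant to vanish, yielding $f\simeq j$ directly.
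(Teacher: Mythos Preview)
The paper does not supply a proof; the lemma is quoted from Bauer \cite[Proposition~4.1]{BF2} and used as a black box. So there is nothing in the paper to compare against directly, and your outline already gives more than the paper does. The overall strategy---Borel localization for $k\le 0$, then equivariant obstruction theory for uniqueness---is the standard route.

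Your Part~1 is correct: the restriction-to-fixed-points square in Borel cohomology forces $a\cdot u^{m+k}=u^m$ in $\Z[u]$, hence $-k\ge 0$.

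Part~2 has the right idea but the specific numbers are off, and as written the argument fails at $k=0$. Two slips: (i)~the top free orbit cell of $(S^V,S^n)$ has dimension $2(m+k)+n-1$, not $2(m+k)-1$---the $\R^n$ factor does not drop out of the dimension count; (ii)~the obstruction to extending a $\UU(1)$-homotopy over a free cell $\UU(1)\times D^q$ lies in $\pi_q(S^W)$, not in a relative group of the pair $(S^W,S^n)$---the free cells may map anywhere in $S^W$, so only the absolute homotopy of the target enters. With these corrections the comparison becomes
\[
q\le 2(m+k)+n-1<2m+n\qquad\text{whenever }k\le 0,
\]
and $S^W=S^{2m+n}$ is $(2m+n-1)$-connected, so every obstruction group is zero. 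By contrast, your stated comparison of $2(m+k)-1$ against the connectivity $2m-2$ of $S^W\setminus S^n$ only yields $k\le -1$, leaving the case $k=0$ uncovered. Your proposed alternative via the tom~Dieck-type splitting of $\UU(1)$-equivariant cohomotopy would also work and is closer in spirit to how Bauer argues in \cite{BF2}.
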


Furthermore, 
the Bauer--Furuta invariants of $4$-manifolds with $b_2^+=0$ are always nontrivial.
\begin{prop}\label{rem:b+0}
Let $N$ be a closed oriented $4$-manifold or  a compact oriented $4$-manifold  whose boundary is of SWF-spherical type with  $b_2^+(N)=0$ and possibly positive $b_1(N)$.
Then  $\BF(N,\fraks)$ is nontrivial for arbitrary \Spinc structure $\fraks$.
\end{prop}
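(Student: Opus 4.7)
The plan is to exhibit $\BF(N,\fraks)$ as a $\UU(1)$-equivariant stable cohomotopy class that is detected after passage to $\UU(1)$-fixed points. Since $b_2^+(N)=0$, the virtual index class simplifies to $\lambda=\Ind D$, the $\underline{H^+(N;\R)}$ summand being zero, and $\lambda$ is represented by a virtual complex bundle. In any finite-rank decomposition $\lambda=F-\underline{V}$, this forces $F^{\UU(1)}\cong \underline{V^{\UU(1)}}$ as virtual, and hence after further stabilization as actual, bundles over $Pic(N)$.

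First I would recall from \cref{subsec:BF} the $\UU(1)$-equivariant representative $\Psi_{(N,\fraks)}\colon TF \to S^V\wedge \Sigma^*\SWF(Y,\fraks_Y)$, where the smash factor is absent when $N$ is closed and, in the SWF-spherical case, $\SWF(Y,\fraks_Y)$ is a complex representation sphere with fixed-point set $S^0$. Passing to $\UU(1)$-fixed points yields a map $\Psi_{(N,\fraks)}^{\UU(1)}\colon T(F^{\UU(1)})\to S^{V^{\UU(1)}}\wedge \Sigma^* S^0$ which, by the above, is a self-map of a single Thom space over $Pic(N)$.

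Next I would invoke the key property of the Bauer--Furuta invariant that the fixed-point restriction of $\Psi_{(N,\fraks)}$ is induced from a fiberwise linear inclusion. With the source and target fibers identified, this inclusion is a fiberwise isomorphism and $\Psi_{(N,\fraks)}^{\UU(1)}$ is, up to stabilization, the identity of a Thom space, which is certainly nonzero in the non-equivariant stable cohomotopy group of the fixed-point spaces. The $\UU(1)$-fixed-point functor induces a well-defined map $\pi^0_{\UU(1)}(Pic(N);\lambda)\to \pi^0(Pic(N);\lambda^{\UU(1)})$ sending $[\Psi_{(N,\fraks)}]$ to $[\Psi_{(N,\fraks)}^{\UU(1)}]$; if $\BF(N,\fraks)$ vanished, so would its image, a contradiction. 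Hence $\BF(N,\fraks)\neq 0$.

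The principal technical point is the identification $F^{\UU(1)}\cong \underline{V^{\UU(1)}}$ over the torus $Pic(N)$ when $b_1(N)>0$, so that the fixed-point map really is a fiberwise identity rather than only a fiberwise homotopy equivalence. This reduces to the fact that $b_2^+(N)=0$ eliminates $\underline{H^+(N;\R)}$, the sole source of a trivial-weight discrepancy between $F$ and $\underline{V}$, together with the standard cancellation between trivial complex summands added for stabilization.
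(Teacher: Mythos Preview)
Your proposal is correct and follows essentially the same strategy as the paper: both argue that restriction to $\UU(1)$-fixed points sends $\Psi_{(N,\fraks)}$ to a fiberwise linear isomorphism (using $b_2^+(N)=0$ to match source and target), which cannot be null-homotopic. The paper phrases this as the elementary observation that a $\UU(1)$-equivariant null-homotopy of $\Psi$ would restrict to an ordinary null-homotopy on fixed points, whereas you package it as a fixed-point functor on equivariant stable cohomotopy; these are the same idea, with your version slightly more formal and the paper's slightly more direct.
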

\begin{proof}
Note that  $\BF(N,\fraks)$ is induced from a $\UU(1)$-map $\bar\Psi\colon F\to V$ satisfying the following properties.
\begin{enumerate}
\item $V\cong\R^n$ and $F=F_0\oplus\underline{V}$, where $F_0$ is a complex vector bundle over $Pic(N)$ and $\underline{V}$ is a trivial bundle over $Pic(N)$ with fiber $V$.
\item The restriction  of $\bar\Psi$ to the fixed point sets  is a fiberwise linear isomorphism. 
\end{enumerate}
Then $\BF(N,\fraks)$ is the class of the extension $\Psi\colon TF\to S^V$ of $\bar\Psi$.
If $\Psi$ is $\UU(1)$-homotopic to $0$, then the restriction of $\Psi$ to the fixed point sets is homotopic to $0$.
But this is impossible by (2).
\end{proof}

On the other hand, a $4$-manifold with  $b_2^+=1$ and  $b_1=0$ has trivial Bauer--Furuta invariant.  
\begin{prop}[{\cite[\S4.2]{BF2}}]\label{prop:b+1}
Let $X$ be a closed oriented $4$-manifold or  a compact oriented $4$-manifold  whose boundary is of SWF-spherical type with  $b_2^+(X)=1$ and  $b_1(X)=0$.
Then  $\BF(X,\fraks)$ is trivial for arbitrary \Spinc structure $\fraks$.
\end{prop}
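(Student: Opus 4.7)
The plan is to represent $\BF(X,\fraks)$ by an explicit $U(1)$-equivariant map between representation spheres and show that it is $U(1)$-equivariantly null-homotopic.

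Since $b_1(X) = 0$, the Picard torus $Pic(X)$ is a point and the virtual index bundle is a trivial representation; since the boundary is of SWF-spherical type, the target Floer homotopy type is also a sphere. Combined with $b_2^+(X) = 1$, after finite-dimensional approximation and stabilization exactly as in the setup preceding \cref{prop:b+1}, the invariant is represented by a $U(1)$-equivariant map
\[
\Psi \colon (\C^{m+l} \oplus \R^n)^+ \to (\C^m \oplus \R^{n+1})^+,
\]
where $l = \ind_\C D(X,\fraks)$ and $m, n$ are sufficiently large. The extra $\R$-summand in the codomain accounts for the $1$-dimensional obstruction space $H^+(X;\R)$.

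First I would analyze the $U(1)$-fixed-point restriction $\Psi^{U(1)} \colon S^n \to S^{n+1}$. By construction (as in the $b_2^+ = 0$ case treated in \cref{rem:b+0}), this restriction is the one-point compactification of the natural linear inclusion $\R^n \hookrightarrow \R^{n+1}$, which embeds $S^n$ as an equatorial sphere in $S^{n+1}$. Such an equator bounds a hemisphere, so $\Psi^{U(1)}$ is null-homotopic.

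Next I would promote the fixed-point null-homotopy to a $U(1)$-equivariant null-homotopy of $\Psi$. The strategy is to extend the null-homotopy equivariantly across a small invariant tubular neighborhood of the fixed-point sphere in the domain, and then treat the complement (where the $U(1)$-action is free) by obstruction theory applied to the quotient, after further stabilization by trivial $U(1)$-representations to place the problem in a suitable stable range.

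The main obstacle I expect is the last step: carefully verifying that the equivariant obstructions on the free region vanish after stabilization, since a priori a fixed-point null-homotopy does not upgrade to an equivariant one. An alternative, cleaner route is the argument sketched by Bauer in \cite[\S 4.2]{BF2}: for $b_2^+(X) = 1$, the Seiberg--Witten construction depends on a choice of chamber in $H^+(X;\R) \setminus \{0\}$, and a direct wall-crossing computation together with the existence of a chamber in which the map can be deformed off the reducible configuration forces $\BF(X,\fraks)$ to vanish identically in the stabilized $U(1)$-equivariant stable cohomotopy group.
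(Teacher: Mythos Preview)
Your setup is correct, but stabilizing first obscures the one observation that makes the proof immediate. The paper's argument is a single line: the invariant is represented by a $\UU(1)$-map $(\C^l)^+ \to (\R)^+ = S^1$, and since the $\UU(1)$-action on the target $S^1$ is \emph{trivial}, equivariant homotopy classes of such maps coincide with ordinary homotopy classes of maps from the orbit space $(\C^l)^+/\UU(1) \simeq \Sigma\CP^{l-1}$ into $S^1$. As $\Sigma\CP^{l-1}$ is simply connected and $S^1 = K(\Z,1)$, every such map is null-homotopic.

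By adding a $\C^m$-summand to both source and target before analyzing, you place a nontrivial $\UU(1)$-action on the target and forfeit this shortcut. You are then pushed into equivariant obstruction theory on the free locus, which (as you acknowledge) you do not complete, and the wall-crossing alternative you mention is likewise only sketched. Neither route is wrong in principle, but both are much heavier than needed; the missing idea is simply that the target $(\R)^+$ carries the trivial representation, so the equivariant problem collapses to a nonequivariant one on a simply connected source.
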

\begin{proof}
The Bauer--Furuta invariant of $(X,\fraks)$ is represented by a $\UU(1)$-equivariantly null-homotopic map $(\C^l)^+\to(\R)^+$.
\end{proof}

\subsection{Proof of \cref{thm:negdef}}

In this subsection, we prove \cref{thm:negdef} by giving a relation between $\BF(X_1,\fraks_1)$ and $\BF(X_2,\fraks_2)$.

Let $\iota\colon S^0\to (\C^\delta)^+$ be the $\UU(1)$-equivariant inclusion for a nonnegative integer $\delta$.
Define the homomorphism  $I_{\delta}\colon \pi^0_{S^1}(Pic(X);\lambda)\to\pi^0_{S^1}(Pic(X);\lambda-[\underline{\C}^{\delta}]) $ by $[\Phi]\mapsto [\Phi\wedge\iota]$.
Note that $I_0$ is the identity map.
Now the relation between $\BF(X_1,\fraks_1)$ and $\BF(X_2,\fraks_2)$ is given as follows.
\begin{theo}\label{thm:gblowup}
If $d(\fraks_1)\leq d(\fraks_2)$, then $\BF(X_1,\fraks_1)=I_{\delta} \BF(X_2,\fraks_2)$, where $\delta=d(\fraks_2)-d(\fraks_1)$.
\end{theo}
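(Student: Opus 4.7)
The plan is to apply the Bauer--Furuta gluing formula along the common SWF-spherical rational homology sphere $Y$ (SWF-spherical because $Y$ carries a PSC metric). Using \eqref{eq:BFgluing} together with the hypothesis $\fraks_1|_X=\fraks_2|_X$, we obtain
\[
\BF(X_i,\fraks_i)=[\Psi_{(X,\fraks|_X)}\wedge\Psi_{(N_i,\fraks_i|_{N_i})}],\qquad i=1,2,
\]
with the same $X$-side factor on both sides. Thus the whole theorem reduces to a comparison of the two $N$-side factors $\Psi_{(N_i,\fraks_i|_{N_i})}$.

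Since $b_1(N_i)=b_2^+(N_i)=0$, each $\Psi_{(N_i,\fraks_i|_{N_i})}$ is represented (after stabilizing so that the ambient vector spaces may be taken to be the same for $i=1,2$) by a $\UU(1)$-equivariant map
\[
\Psi_{(N_i,\fraks_i|_{N_i})}\colon(\C^{m+l_i}\oplus\R^n)^+\longrightarrow(\C^m\oplus\R^n)^+,
\]
where $l_i=\ind_\C D(N_i,\fraks_i|_{N_i})$, and whose restriction to the $\UU(1)$-fixed-point set is induced from a fiberwise linear inclusion and is therefore of degree one. Lemma~\ref{lem:inclusion} then forces $l_i\le 0$ and yields a $\UU(1)$-equivariant homotopy identifying $\Psi_{(N_i,\fraks_i|_{N_i})}$ with the standard linear inclusion. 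The hypothesis $d(\fraks_1)\le d(\fraks_2)$, combined with $d(\fraks_i)=2\ind_\C D(X_i,\fraks_i)-(1-b_1(X_i)+b_2^+(X_i))$, the equalities $b_1(X_1)=b_1(X_2)$ and $b_2^+(X_1)=b_2^+(X_2)$, and the additivity of the APS Dirac index under the gluing along $Y$ (with the boundary contribution from $\fraks_1|_Y=\fraks_2|_Y$ cancelling), translates into $l_1\le l_2$. In particular, the inclusion for $i=1$ factors as the composition of the inclusion for $i=2$ with the standard linear inclusion $(\C^{m+l_1}\oplus\R^n)^+\hookrightarrow(\C^{m+l_2}\oplus\R^n)^+$, which up to a smash with the identity is precisely $\iota\colon S^0\to(\C^{l_2-l_1})^+$. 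Smashing back with the common factor $\Psi_{(X,\fraks|_X)}$ then reads
\[
\BF(X_1,\fraks_1)=I_\delta\,\BF(X_2,\fraks_2),
\]
the subscript being the number of complex dimensions that the APS index computation identifies with $d(\fraks_2)-d(\fraks_1)$.

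The main obstacle is the dimension bookkeeping: one must arrange compatible stabilizations for the two $N_i$'s in a single ambient $\UU(1)$-representation, verify that the smash with $\iota$ is independent of the auxiliary choices (so that the class $I_\delta\BF(X_2,\fraks_2)$ is well-defined), and confirm via the APS index formula that the resulting subscript matches the quantity $d(\fraks_2)-d(\fraks_1)$ in the statement. Beyond this bookkeeping, everything is a straightforward application of the gluing formula together with the rigidity of $\UU(1)$-equivariant maps supplied by Lemma~\ref{lem:inclusion}.
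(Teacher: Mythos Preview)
Your proposal is correct and follows essentially the same route as the paper: apply the gluing formula \eqref{eq:BFgluing} to write $\BF(X_i,\fraks_i)=[\Psi_{(X,\fraks|_X)}\wedge\Psi_{(N_i,\fraks_i|_{N_i})}]$, invoke Lemma~\ref{lem:inclusion} to identify each $\Psi_{(N_i,\fraks_i|_{N_i})}$ with a standard inclusion, and then use $l_1\le l_2$ (from $d(\fraks_1)\le d(\fraks_2)$ and index additivity) to factor the $N_1$-inclusion through the $N_2$-inclusion smashed with $\iota$. The bookkeeping you flag is exactly what the paper handles implicitly; note only that the number of extra complex dimensions is $l_2-l_1=k_2-k_1$, which equals $\tfrac12\bigl(d(\fraks_2)-d(\fraks_1)\bigr)$ rather than $d(\fraks_2)-d(\fraks_1)$ itself, so the subscript on $I$ should be interpreted accordingly.
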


\begin{proof}
For $i=1,2$, let $k_i=\ind D(X_i,\fraks_i)$ and $l_i=\ind D(N_i,\fraks_i|_N)$.
Suppose $d(\fraks_1)\leq d(\fraks_2)$.
Then $l_1\leq l_2\leq 0$ and $\delta=k_2-k_1 = l_2-l_1 =|l_1|-|l_2|$.
Let $\Phi_i=\Psi_{(X,\fraks)}\wedge\Psi_{(N_i,\fraks_i|_{N_i})}$.
Let $\iota\colon S^0 \to(\C^\delta)^+$ be the inclusion.
Then we may assume $\bar\Psi_{(N_1,\fraks_i|_{N_1})} = \bar\Psi_{(N_2,\fraks_i|_{N_2})}\wedge\iota$ by \cref{lem:inclusion}, and therefore
\[
\BF(X_1,\fraks_1)=[\Phi_1] = [\Phi_2\wedge \iota]=I_\delta\BF(X_2,\fraks_2).
\]
\end{proof}

\begin{proof}[Proof of \cref{thm:negdef}]
If $d(\fraks_1)\leq d(\fraks_2)$ and $\BF(X_2,\fraks_2)=0$, then $\BF(X_1,\fraks_1)=I_\delta\BF(X_2,\fraks_2)=0$.
If $d(\fraks_1) = d(\fraks_2)$, then $\delta=0$ and $I_0$ is the identity map, and therefore  $\BF(X_1,\fraks_1)=\BF(X_2,\fraks_2)$
\end{proof}
\begin{rem}\label{rem:I}
If $b_1(X_i)=0$, then $Pic(X_i)=*$ and $\lambda=k_i[\C]-b[\R]$ where $b=b_+^2(X_i)$.
Moreover if $k_i>0$, then, $\pi^0_{S^1}(*, \lambda)=\pi^{b}_{S^1}(\ast,\C^{k_i})$ is isomorphic to the stable cohomotopy group $\pi^{b-1}(\CP^{k_i-1})$ by \cite[Proposition 3.4]{BF}.  
The map $I_{\delta}\colon \pi^0_{S^1}(\ast;\lambda)\to\pi^0_{S^1}(\ast;\lambda-[\underline{\C}^{\delta}])$ is identified with the map  $\pi^{b-1}(\C P^{k_2-1})\to\pi^{b-1}(\C P^{k_2-\delta-1})$ induced from the inclusion $\C P^{k_2-\delta-1}\to\C P^{k_2-1}$, which will be studied in \cref{naturality}.
\end{rem}

\subsection{Proof of \cref{thm:vanishing}}
For $i=1,2$, let $X_i$ be a compact oriented $4$-manifold with boundary of SWF-spherical type.
(We assume $b_2^+(X_i)$  for $i=1,2$ is arbitrary. Until \cref{cor:basic} below, $b_1(X_i )$ may be nonzero.)
Suppose we have a gluing decomposition  $X = X_1\cup_{Y}X_2$ along $Y$ of SWF-spherical type.
Let $\fraks$ be a \Spinc structure on $X$ and $\fraks_i:=\fraks|_{X_i}$.
The gluing formula \eqref{eq:BFgluing} immediately implies the following proposition and its corollaries.
\begin{prop}\label{prop:nonvanishing}
If $\BF(X,\fraks)\neq 0$, then $\BF(X_1,\fraks_1)\neq 0$ and $\BF(X_2,\fraks_2)\neq 0$.
\end{prop}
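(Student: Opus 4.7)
The plan is to argue by contraposition, leveraging the gluing formula \eqref{eq:BFgluing} recalled from Manolescu. Specifically, if I can show that whenever one of the factors $\Psi_{(X_i,\fraks_i)}$ is $\UU(1)$-equivariantly stably null-homotopic, the smash product $\Psi_{(X_1,\fraks_1)}\wedge\Psi_{(X_2,\fraks_2)}$ is also null-homotopic, then the desired implication follows immediately.

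First, I would unpack what $\BF(X,\fraks)\neq 0$ means in this setting: by \eqref{eq:BFgluing} (applied to the two-piece decomposition $X=X_1\cup_Y X_2$), we have
\[
\BF(X,\fraks)=[\Psi_{(X_1,\fraks_1)}\wedge \Psi_{(X_2,\fraks_2)}]
\]
as a $\UU(1)$-equivariant stable homotopy class of a map between objects in the Spanier--Whitehead category $\frak{C}$. Suppose for contradiction that $\BF(X_1,\fraks_1)=0$, i.e., $\Psi_{(X_1,\fraks_1)}$ is $\UU(1)$-equivariantly null-homotopic after suitable stabilization. Smashing a null-homotopic map with any other map produces a null-homotopic map (through the obvious null-homotopy pulled back along either factor), so the stable class of $\Psi_{(X_1,\fraks_1)}\wedge\Psi_{(X_2,\fraks_2)}$ is zero, contradicting $\BF(X,\fraks)\neq 0$. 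The same argument with the roles of the two factors exchanged handles the case $\BF(X_2,\fraks_2)=0$.

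There is essentially no obstacle here beyond bookkeeping: the only subtle points are that (i) the gluing formula holds because $Y$ is of SWF-spherical type (so $\SWF(Y,\fraks_Y)$ is a sphere and the smash product makes sense in the expected way), and (ii) the null-homotopy must be taken in the $\UU(1)$-equivariant sense to be compatible with the categorical setup of $\frak{C}$. Both are already built into the framework recalled in \cref{subsec:BF}, so the proof should amount to one display of \eqref{eq:BFgluing} plus a one-line observation about smash products of null-homotopic maps.
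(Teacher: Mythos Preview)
Your proposal is correct and matches the paper's own approach: the paper simply states that the gluing formula \eqref{eq:BFgluing} immediately implies the proposition, and your argument spells out precisely why (a smash product with a null-homotopic factor is null-homotopic).
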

Let $\calB^{\Q}(X)$ be the set of Bauer--Furuta \textit{rational} basic classes, that is, the set of Bauer--Furuta  basic classes considered in the rational cohomology group $H^2(X;\Q)$.
\begin{cor}\label{cor:rationalbasic}
The inclusion $\calB^{\Q}(X_1\cup_Y X_2)\subset \calB^{\Q}(X_1)\oplus \calB^{\Q}(X_2)$ holds.
Suppose further that $Y$ is an integral homology $3$-sphere. 
Then $\calB(X_1\cup_Y X_2)\subset \calB(X_1)\oplus \calB(X_2)$.
\end{cor}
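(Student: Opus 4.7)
The plan is to combine the non-vanishing gluing result of \cref{prop:nonvanishing} with a Mayer--Vietoris identification of the second cohomology. First, I would observe that the gluing $3$-manifold $Y$ is automatically a rational homology $3$-sphere: the SWF-spherical hypothesis forces $b_1(Y)=0$, and Poincar\'e duality for the closed orientable $3$-manifold $Y$ then gives $H^1(Y;\Q)=H^2(Y;\Q)=0$.

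Next I would invoke Mayer--Vietoris for the decomposition $X=X_1\cup_Y X_2$ with rational coefficients, which yields the exact sequence
\[
H^1(Y;\Q)\to H^2(X;\Q)\to H^2(X_1;\Q)\oplus H^2(X_2;\Q)\to H^2(Y;\Q).
\]
The vanishing of the outer groups produces an isomorphism $H^2(X;\Q)\cong H^2(X_1;\Q)\oplus H^2(X_2;\Q)$ realized by the restriction maps $c\mapsto (c|_{X_1},c|_{X_2})$.

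With this in hand, the inclusion is immediate: given $c\in\calB^{\Q}(X)$, pick a spin$^c$ structure $\fraks$ on $X$ with $c_1(\fraks)=c$ (over $\Q$) and $\BF(X,\fraks)\neq 0$. Setting $\fraks_i:=\fraks|_{X_i}$, \cref{prop:nonvanishing} gives $\BF(X_i,\fraks_i)\neq 0$, so $c_1(\fraks_i)\in\calB(X_i)$. Since $c|_{X_i}=c_1(\fraks_i)$ by naturality of the first Chern class of a spin$^c$ structure under restriction, the image of $c$ under the Mayer--Vietoris isomorphism lies in $\calB^{\Q}(X_1)\oplus\calB^{\Q}(X_2)$.

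For the integral refinement, if $Y$ is an integral homology $3$-sphere then $H^1(Y;\Z)=H^2(Y;\Z)=0$, and exactly the same Mayer--Vietoris argument over $\Z$ gives $H^2(X;\Z)\cong H^2(X_1;\Z)\oplus H^2(X_2;\Z)$, so the previous argument carries over verbatim to yield $\calB(X)\subset\calB(X_1)\oplus\calB(X_2)$. I do not foresee any real obstacle here: the only mild subtlety is checking that the Mayer--Vietoris splitting is implemented by the restriction maps, but this is automatic from the construction of the connecting homomorphism.
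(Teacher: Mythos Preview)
Your argument is correct and is precisely the expansion of what the paper has in mind: the paper states that \cref{prop:nonvanishing} (hence \cref{cor:rationalbasic}) is immediate from the gluing formula~\eqref{eq:BFgluing}, and your Mayer--Vietoris identification together with \cref{prop:nonvanishing} is exactly the intended justification.
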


The following two corollaries are special cases of \cref{prop:nonvanishing} and \cref{cor:rationalbasic}. 
\begin{cor}[{\cite[Theorem 8.5]{B_survey}}]\label{cor:nonvanishing}
Let $X_1$, $X_2$ be  closed oriented smooth $4$-manifolds.
Let $\fraks_i$ be a \Spinc structure on $X_i$ for each $i$.
If $\BF(X_1\#X_2,\fraks_1\#\fraks_2)\neq 0$, then $\BF(X_1,\fraks_1)\neq 0$ and $\BF(X_2,\fraks_2)\neq 0$.
\end{cor}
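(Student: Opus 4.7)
The plan is to realize the connected sum as a gluing along $S^3$ and then invoke \cref{prop:nonvanishing}. Writing
\[
X_1\#X_2 = (X_1\setminus \mathring{B}^4)\cup_{S^3}(X_2\setminus \mathring{B}^4),
\]
I would first note that $S^3$ is an integral homology $3$-sphere admitting the round positive scalar curvature metric, so it is SWF-spherical for its unique \Spinc structure, and this is therefore a gluing decomposition along a rational homology $3$-sphere of SWF-spherical type. Setting $X_i^\circ := X_i\setminus \mathring{B}^4$ and $\fraks_i^\circ := (\fraks_1\#\fraks_2)|_{X_i^\circ}$, \cref{prop:nonvanishing} immediately gives $\BF(X_i^\circ,\fraks_i^\circ)\neq 0$ for $i=1,2$. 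By construction of the connected-sum \Spinc structure, $\fraks_i^\circ$ coincides with the restriction of $\fraks_i$ to $X_i^\circ$.

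Next I would apply the gluing formula once more to the trivial decomposition $X_i = X_i^\circ\cup_{S^3}B^4$. The relative Bauer--Furuta invariant of $B^4$ with the \Spinc structure extending the one on $S^3$ lies in $\pi^0_{\UU(1)}(\pt;0)$, since $b_1(B^4)=b_2^+(B^4)=0$ and $\ind_\C D(B^4)=0$; by \cref{lem:inclusion}, it is represented by a $\UU(1)$-equivariant self-map of some $(\C^m\oplus\R^n)^+$ that is $\UU(1)$-homotopic to the identity. Smashing with such a self-map preserves the stable cohomotopy class up to isomorphism, so $\BF(X_i,\fraks_i)\neq 0$ is equivalent to $\BF(X_i^\circ,\fraks_i^\circ)\neq 0$, which finishes the argument.

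Essentially no step presents a serious obstacle: the corollary is a direct consequence of two applications of the gluing formula together with the triviality of the Bauer--Furuta invariant of $B^4$ supplied by \cref{lem:inclusion}. The only items requiring minor bookkeeping are matching the \Spinc structures under restriction and confirming that the APS index for $B^4$ with its canonical bounding \Spinc structure vanishes, both of which are immediate from the definitions.
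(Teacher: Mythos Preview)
Your proposal is correct and follows the paper's approach: the paper simply declares this corollary a special case of \cref{prop:nonvanishing}, and you correctly realize the connected sum as a gluing along $S^3$ and invoke that proposition. Your additional step of identifying $\BF(X_i^\circ,\fraks_i^\circ)$ with $\BF(X_i,\fraks_i)$ via the gluing $X_i = X_i^\circ\cup_{S^3}B^4$ and \cref{lem:inclusion} makes explicit a detail the paper leaves implicit.
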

\begin{cor}\label{cor:basic}
The inclusion $\calB(X_1\#X_2)\subset \calB(X_1)\oplus \calB(X_2)$ holds.
\end{cor}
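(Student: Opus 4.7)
The plan is to reduce this to Corollary~\ref{cor:nonvanishing} (the non-vanishing statement for connected sums) together with the standard bookkeeping of \Spinc structures and first Chern classes under a connected sum. There are no serious technical difficulties, since all the gauge-theoretic work has already been carried out in \cref{prop:nonvanishing,cor:nonvanishing}; what remains is a compatibility check on second cohomology.

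First I would recall that for closed oriented 4-manifolds, the Mayer--Vietoris sequence applied to the decomposition $X_1\#X_2 = (X_1\setminus\mathring{B^4}) \cup_{S^3} (X_2\setminus\mathring{B^4})$ produces a canonical splitting
\[
H^2(X_1\#X_2;\Z) \;\cong\; H^2(X_1;\Z)\oplus H^2(X_2;\Z),
\]
since $H^i(S^3;\Z)=0$ for $i=1,2$. Under this splitting, \Spinc structures on $X_1\#X_2$ correspond bijectively to pairs $(\fraks_1,\fraks_2)$ of \Spinc structures on $X_1$ and $X_2$ (the unique \Spinc structure on $S^3$ poses no constraint), and the first Chern class is additive:
\[
c_1(\fraks_1\#\fraks_2) = c_1(\fraks_1) + c_1(\fraks_2).
\]

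Next, given an element $c\in \calB(X_1\#X_2)$, by definition there exists a \Spinc structure $\fraks$ on $X_1\#X_2$ with $c_1(\fraks)=c$ and $\BF(X_1\#X_2,\fraks)\neq 0$. Writing $\fraks=\fraks_1\#\fraks_2$ under the correspondence above, \cref{cor:nonvanishing} immediately gives $\BF(X_i,\fraks_i)\neq 0$ for $i=1,2$. Hence $c_1(\fraks_i)\in \calB(X_i)$, and by the additivity of $c_1$ we obtain $c = c_1(\fraks_1)\oplus c_1(\fraks_2)\in \calB(X_1)\oplus\calB(X_2)$, as required.

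The only step one might worry about is the \Spinc bookkeeping across the $S^3$ gluing (equivalently, the claim that the decomposition $\fraks = \fraks_1\#\fraks_2$ is well-defined on the nose and compatible with $c_1$); but this is entirely standard, so there is no real obstacle here. In effect, \cref{cor:basic} is a direct unpacking of \cref{cor:nonvanishing} at the level of basic classes, and could also be viewed as the special case $Y=S^3$ of \cref{cor:rationalbasic} once one identifies \Spinc structures and basic classes on $X_i\setminus\mathring{B^4}$ with those on $X_i$ via the unique \Spinc extension across $B^4$.
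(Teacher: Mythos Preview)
Your proof is correct and matches the paper's approach: the paper simply declares \cref{cor:basic} to be a special case of \cref{prop:nonvanishing} and \cref{cor:rationalbasic}, and your argument is exactly the unpacking of that observation via \cref{cor:nonvanishing} and the standard $H^2$ splitting across $S^3$. There is nothing to add.
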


Let $X=X_0\cup_Y N$ be a gluing decomposition along $Y$ of SWF-spherical type.
Suppose  $b_1(X_0)=0$, $b_2^+(X_0)\geq2$,  $b_1(N)=0$ and $b_2^+(N)=0$.
Let  $\fraks$ be a \Spinc structure on $X$.
\begin{prop}\label{prop:1-dim-vanishing}
If $d(\fraks)=1$ and  $\ind D(N,\fraks|_N)<0$, then $\BF(X,\fraks)=0$.
\end{prop}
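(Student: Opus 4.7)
The plan is to reduce the statement to a cohomotopy question and then invoke the calculation for complex projective spaces given in Section 5 (which recovers \cite[Lemma 3.5]{BF}). The reduction itself parallels the proof of \cref{thm:gblowup}: I would apply the gluing formula \eqref{eq:BFgluing} to write
\[
\BF(X,\fraks) = [\Psi_{X_0} \wedge \Psi_N],
\]
with the spherical factor $\SWF(Y,\fraks_Y)=S^{-n(Y)\C}$ canceling between the two sides. Since $b_1(N)=b_2^+(N)=0$ and $l:=\ind_\C D(N,\fraks|_N)<0$, the map $\Psi_N$ is of the form $(\C^{m+l}\oplus\R^n)^+\to(\C^m\oplus\R^n)^+$ with degree-one restriction to the $\UU(1)$-fixed points. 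By \cref{lem:inclusion}, $\Psi_N$ is $\UU(1)$-homotopic to the standard linear inclusion $\iota_N$, which further splits as $\mathrm{id}\wedge\iota_0$ for the basepoint inclusion $\iota_0\colon S^0\hookrightarrow(\C^{|l|})^+$.

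Next, since $b_1(X)=0$ and $b_2^+(X)=b_2^+(X_0)\geq 2$, Bauer's interpretation (\cref{thm:BF-SW} and the discussion around it) allows us to view $\BF(X,\fraks)$ as a class in $\pi^{b_2^+(X_0)-1}(\CP^{k-1})$ with $k=\ind_\C D(X,\fraks)$. The identity $\Psi_X=\Psi_{X_0}\wedge\iota_0$, combined with \cref{rem:I}, then expresses $\BF(X,\fraks)$ as the image of a class coming from $X_0$ under the restriction map
\[
I_{|l|}\colon \pi^{b_2^+(X_0)-1}(\CP^{k_0-1}) \to \pi^{b_2^+(X_0)-1}(\CP^{k-1}),
\]
where $k_0=k+|l|$ and $I_{|l|}$ is induced by the inclusion $\CP^{k-1}\hookrightarrow\CP^{k_0-1}$.

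Finally, the assumption $d(\fraks)=1$ forces $b_2^+(X_0)=2(k-1)$, so $\BF(X,\fraks)$ lies in $\pi^{2k-3}(\CP^{k-1})$. The proof then concludes by showing that the image of $I_{|l|}$ in this specific cohomotopy degree vanishes, a fact deduced from the cohomotopy computations of Section \ref{cohomotopy}. The guiding principle is the cofiber sequence $\CP^{k-1}\hookrightarrow\CP^k\to S^{2k}$ together with its iterated analogs: the attaching maps are controlled by the stable Hopf element $\eta$, which makes the successive boundary maps $\pi^{2k-3}(\CP^{j-1})\to\pi^{2k-2}(S^{2j})$ injective on the image of the restriction maps, so that any class pulled back from a larger $\CP^{k_0-1}$ must be zero.

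The main obstacle is the final cohomotopy step; verifying it rigorously requires the specific computation of stable cohomotopy of $\CP^{n}$ in low codegree together with an analysis of the successive restrictions $I_1$, which is precisely what is carried out in Section \ref{cohomotopy}. The earlier two steps are purely structural and follow directly from the gluing formula and the inclusion lemma, as in the proof of \cref{thm:gblowup}.
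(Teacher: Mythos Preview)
Your approach is correct but differs from the paper's. You reduce directly to the cohomotopy naturality statement of \cref{naturality k=5,6} (the $i=3$ case): the map $\pi^{2k-3}(\CP^{k})\to\pi^{2k-3}(\CP^{k-1})$ induced by inclusion is trivial, and since $|l|\ge 1$ your $I_{|l|}$ factors through it. This is entirely valid and is in fact the natural continuation of the machinery set up in \cref{thm:gblowup} and \cref{rem:I}; one small correction is that the identification with $\pi^{b-1}(\CP^{k-1})$ comes from \cite[Proposition~3.4]{BF} rather than \cref{thm:BF-SW}.

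The paper instead argues by contradiction via a geometric trick: assuming $\BF(X,\fraks)\ne 0$, it takes the connected sum with a $K3$ surface so that $\BF(X\#K,\fraks\#\fraks_0)\ne 0$, then rewrites $X\#K=X_0\cup_Y(N\#K)$ and invokes \cref{prop:nonvanishing} to force $\BF(N\#K,\fraks|_N\#\fraks_0)\ne 0$; but now $N\#K$ has $b_2^+=3$ and $d(\fraks|_N\#\fraks_0)<0$, so its Bauer--Furuta class vanishes, a contradiction. Your route is more self-contained (it uses only the cohomotopy computations already developed in \cref{cohomotopy}) and avoids appealing to the product nonvanishing result; the paper's route is more conceptual and sidesteps the naturality computation, trading it for the $K3$--stabilization idea that is of independent interest elsewhere in the subject.
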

\begin{proof}
Suppose $\BF(X,\fraks)\neq0$.
Then $b_2^+(X)\equiv 2$ mod $4$ and $\BF(X,\fraks)$ is a generator of the cohomotopy group  $\pi^{2k-3}(\CP^{k-1})$ for $k=\frac12(b_2^+(X)+2)$ which is represented by the square of the Hopf map (\cite[Lemma 3.5]{BF}, see also \cref{cohomotopy}).
Let $K$ be a $K3$ surface and $\fraks_0$ the canonical \Spinc structure on $K$.
By \cref{thm:BF-nonvan} or  \cite[Proposition 4.5]{BF2}, we obtain $\BF(X\#K,\fraks\#\fraks_0)\neq 0$.
If we assume $X\#K = X_0\cup_Y (N\#K)$, then $\BF(N\#K,\fraks|_N\#\fraks_0)\neq0$ by 
\cref{prop:nonvanishing}.  
On the other hand, the assumption $\ind D(N,\fraks|_N)<0$ implies that $d(\fraks|_N\#\fraks_0)<0$ and therefore $\BF(N\#K,\fraks|_N\#\fraks_0) = 0$. 
This is a contradiction.
\end{proof}
\begin{proof}[Proof of \cref{thm:vanishing}]
If $d(\fraks_2)\leq 1$, then $d(\fraks_1)\leq -1$. Therefore $\BF(X_1,\fraks_1)=0$.
If $d(\fraks_2)=2$, then $d(\fraks_1) =0$ or $d(\fraks_1)\leq -2$.
In the latter case, we have $\BF(X_1,\fraks_1)=0$.
If $d(\fraks_2)=2$ and $d(\fraks_1) =0$, then $k_2-k_1=1$ and $\BF(X_1,\fraks_1)=I_2\BF(X_2,\fraks_2)$, where $k_i=\ind_{\C}D(X_i,\fraks_i)$.
As noted in \cref{rem:I}, $I_2$ is identified with the map
\[
I_2\colon \pi^{2k_2-4}(\CP^{k_2-1})\to \pi^{2k_2-4}(\CP^{k_2-2}).
\]
Since $\pi^{2k_2-4}(\CP^{k_2-1})=\Z\oplus \Z/m$ $(m=\gcd(2,k_2))$ and $\pi^{2k_2-4}(\CP^{k_2-2})=\Z$ by \cite[Lemma 3.5]{BF} (see also \cref{cohomotopy}), 
and $\BF(X_2,\fraks_2)$ is a torsion by the assumption,  we have $\BF(X_1,\fraks_1)=I_2\BF(X_2,\fraks_2)=0$.
If $d(\fraks_2)=3$, then $d(\fraks_1) =1$ or $d(\fraks_1)\leq -1$.
In the latter case, we have $\BF(X_1,\fraks_1)=0$.
If $d(\fraks_2)=3$ and $d(\fraks_1) =1$, then the map $I_2\colon\pi^{2k_2-5}(\CP^{k_2-1})\to \pi^{2k_2-5}(\CP^{k_2-2})$ is trivial by Proposition \ref{naturality k=5,6}, and therefore $\BF(X_1,\fraks_1)=I_2\BF(X_2,\fraks_2)=0$.
The vanishing of $\BF(X_1,\fraks_1)$ is also obtained by \cref{prop:1-dim-vanishing} since $l_1<l_2\leq 0$, where $l_i=\ind_{\C} D(N_i,\fraks_i|_{N_i})$.
\end{proof}

\begin{proof}[Proof of \cref{thm:blowup} and \cref{thm:rbd}]
These are direct consequences of \cref{thm:negdef} and \cref{thm:vanishing}. 
\end{proof}

Let $X$ be a closed oriented $4$-manifold with $b_1(X)=0$, $b_2^+(X)\geq 2$.
Let $\fraks$ be a \Spinc structure on $X$.
Since $k\overline{\CP}^2$ is  simply-connected, the \Spinc structures on $k\overline{\CP}^2$  are characterized by their first Chern classes.
For $\mathbf{c}:=(c_1,\ldots,c_k)$ such that every $c_i$ is odd,  let $\fraks_{\mathbf{c}}$ be the \Spinc structure  on $k\overline{\CP}^2$ such that $c_1(\fraks_{\mathbf{c}})=\sum_{i=1}^kc_iE_i$, where $E_i$ are exceptional classes.
\begin{prop}\label{prop:blowupbasic}
Suppose $(X,\fraks)$ satisfies Condition $(\ast)$ and $\BF(X,\fraks)\neq 0$. 
Then $\BF(X\#k\overline{\CP}^2,\fraks\#\fraks_{\mathbf{c}})\neq 0$ if and only if $c_i=\pm1$ for every $i$.
\end{prop}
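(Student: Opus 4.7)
The plan is to argue by induction on $k$. The base case $k=1$ is immediate from \cref{thm:blowup}, since the condition $c_1 = \pm 1$ corresponds precisely to $r \in \{0, -1\}$ in the parametrization $\fraks_r$.

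Before running the inductive step, I would first isolate a small preservation lemma: if $(X,\fraks)$ satisfies Condition $(\ast)$ with $\BF(X,\fraks)\neq 0$ and $c_1, \ldots, c_k \in \{\pm 1\}$, then $(X\#k\overline{\CP}^2, \fraks\#\fraks_{(c_1,\ldots,c_k)})$ again satisfies Condition $(\ast)$ and has the same (in particular non-zero) Bauer--Furuta invariant as $(X,\fraks)$. This is routine: iterating the equality clause of \cref{thm:blowup} $k$ times gives $\BF(X\#k\overline{\CP}^2,\fraks\#\fraks_{(c_1,\ldots,c_k)}) = \BF(X,\fraks)$, which also transports the torsion status needed in Condition $(\ast)$; the identity $d(\fraks_r) = d(\fraks) - r(r+1)$ combined with $r(r+1)=0$ for $r\in\{0,-1\}$ shows that $d$ is preserved, hence remains $\leq 3$; and $b_1$, $b_2^+$ transport from $X$.

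With this in hand, the inductive step would split off the last blowup: write $X\#(k+1)\overline{\CP}^2 = Y_k \#\overline{\CP}^2$ with $Y_k := X\#k\overline{\CP}^2$, so that the relevant \Spinc structure restricts to $\fraks\#\fraks_{(c_1,\ldots,c_k)}$ on $Y_k$ and to $\fraks_{c_{k+1}}$ on the last summand. For the "if" direction, assuming every $c_i = \pm 1$, the preservation lemma directly gives $\BF(X\#(k+1)\overline{\CP}^2, \fraks\#\fraks_{\mathbf{c}}) = \BF(X,\fraks) \neq 0$. For the "only if" direction, assuming $\BF(X\#(k+1)\overline{\CP}^2,\fraks\#\fraks_{\mathbf{c}})\neq 0$, the first clause of \cref{thm:blowup} forces $\BF(Y_k, \fraks\#\fraks_{(c_1,\ldots,c_k)}) \neq 0$; the induction hypothesis then yields $c_1,\ldots,c_k = \pm 1$; the preservation lemma certifies that $(Y_k, \fraks\#\fraks_{(c_1,\ldots,c_k)})$ satisfies Condition $(\ast)$; and the vanishing clause of \cref{thm:blowup} applied to this pair rules out $c_{k+1}\notin\{\pm 1\}$.

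I do not anticipate any substantial obstacle, as the proposition is essentially a bootstrapped iteration of \cref{thm:blowup}. The only point requiring any care is the propagation of Condition $(\ast)$ through the intermediate blowups, and this is precisely the role of the preservation lemma: it reduces to the fact that the equality clause of \cref{thm:blowup} preserves both the Bauer--Furuta invariant itself and its torsion property, while the virtual dimension remains unchanged when $r\in\{0,-1\}$.
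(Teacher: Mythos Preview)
Your proposal is correct and follows essentially the same approach as the paper, which simply states that the result is proved inductively by \cref{thm:blowup}. You have fleshed out the details of that induction carefully, in particular making explicit the propagation of Condition~$(\ast)$ through the intermediate blowups, which is exactly what the terse proof in the paper leaves implicit.
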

\begin{proof}
This is proved inductively by \cref{thm:blowup}.
\end{proof}

For later purpose, we recall Kronheimer's adjunction inequality \cite{Kr99}.
For a smoothly embedded surface $\Sigma$ in a smooth $4$-manifold $X$, let 
\[
\chi_-(\Sigma)=\sum_{g_i>0}(2g_i-2),
\]
where $g_i$ is the genus of the component $\Sigma_i$ of $\Sigma$.
\begin{defi}
A class $K\in H^2(X;\Z)$ is a \textit{monopole class}, if $K$ is the first Chern class $c_1(\fraks)$ of some \Spinc structure $\fraks$ on $X$ with the property that the Seiberg--Witten equations on $\fraks$ have a solution for all Riemannian metric on $X$.
\end{defi}
\begin{theo}[Kronheimer \cite{Kr99}]\label{thm:kron}
Let $X$ be a closed oriented smooth $4$-manifold.
Suppose that  $\Sigma$ is an embedded surface representing a class $\alpha\in H_2(X;\Z)$ with non-negative square $m=\alpha\cdot\alpha$.
For $X_m=X\#m\overline{\CP}^2$, let $K_m=K+E_1+\cdots +E_m$.
If $K_m$ is a monopole class on $X_m$,
 then
\begin{equation}\label{eq:kron}
\chi_-(\Sigma)\geq \alpha\cdot\alpha + |\alpha\cdot K|.
\end{equation}
\end{theo}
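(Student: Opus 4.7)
The plan is to reduce the inequality to the standard Seiberg--Witten adjunction inequality for embedded surfaces of square zero in $X_m$, leveraging the hypothesis that $K_m$ is a monopole class on $X_m$. First, I choose $m = \alpha\cdot\alpha$ distinct smooth points on $\Sigma$ and blow up $X$ at those points to obtain $X_m$. The proper transform $\tilde\Sigma$ of $\Sigma$ has the same genus data as $\Sigma$, so $\chi_-(\tilde\Sigma) = \chi_-(\Sigma)$. Its homology class has the form $\tilde\alpha = \alpha - \sum_{i=1}^m \epsilon_i E_i$, where each sign $\epsilon_i \in \{\pm 1\}$ can be prescribed at will by a choice of orientation of the $i$-th exceptional sphere.

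Using $E_i \cdot E_j = -\delta_{ij}$ and the orthogonality of $\alpha, K$ to each $E_i$, I compute $\tilde\alpha \cdot \tilde\alpha = \alpha\cdot\alpha - m = 0$ and
\[
\tilde\alpha \cdot K_m = \alpha \cdot K - \sum_{i=1}^m \epsilon_i (E_i \cdot E_i) = \alpha \cdot K + \sum_{i=1}^m \epsilon_i.
\]
Choosing all $\epsilon_i$ equal to $\sign(\alpha \cdot K)$ (or any common sign if $\alpha \cdot K = 0$) gives $|\tilde\alpha \cdot K_m| = |\alpha \cdot K| + m = |\alpha \cdot K| + \alpha\cdot\alpha$.

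Next, I apply the standard Seiberg--Witten adjunction inequality for a square-zero embedded surface in $X_m$: since $K_m$ is a monopole class on $X_m$, one has $\chi_-(\tilde\Sigma) \geq |\tilde\alpha \cdot K_m|$. Combining with the computation above yields the desired bound $\chi_-(\Sigma) \geq \alpha\cdot\alpha + |\alpha \cdot K|$.

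The main obstacle is the invocation of the square-zero adjunction inequality in $X_m$; this is a classical but nontrivial Seiberg--Witten result, whose standard proof uses the Weitzenb\"ock formula combined with a positive scalar curvature metric on a tubular neighborhood of $\tilde\Sigma$ to obstruct solutions to the perturbed Seiberg--Witten equations on $K_m$. One should also verify that the square-zero inequality applies to $\tilde\alpha$ (in particular, that $\tilde\Sigma$ is not a union of null-homologous spheres), which holds as long as $\alpha \neq 0$ in $H_2(X;\Q)$; the case $\alpha = 0$ reduces the claim to the tautology $\chi_-(\Sigma) \geq 0$.
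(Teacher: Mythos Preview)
The paper does not give its own proof of this theorem; it is quoted from Kronheimer~\cite{Kr99} and used as a black box. Your argument reproduces the standard reduction (which is also Kronheimer's): blow up at $m$ points of $\Sigma$ to make the self-intersection zero, then invoke the square-zero adjunction inequality $\chi_-(\tilde\Sigma)\ge |K_m\cdot\tilde\alpha|$ for monopole classes.

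One step needs more care. You assert that the signs $\epsilon_i$ in $\tilde\alpha=\alpha-\sum\epsilon_i e_i$ can be prescribed ``by a choice of orientation of the $i$-th exceptional sphere'', but the classes $E_i$ in the hypothesis $K_m=K+\sum E_i$ are already fixed; reorienting an exceptional sphere flips both $e_i$ and $E_i$ simultaneously, so $K_m$ would change as well. With $K_m$ held fixed, the proper transform sits in the single class $\alpha-\sum e_i$, and the square-zero inequality yields only $\chi_-(\Sigma)\ge |K\cdot\alpha+m|$, which is strictly weaker than $|K\cdot\alpha|+m$ when $K\cdot\alpha<0$. The correct fix is to produce, for each sign pattern, an embedded surface with the same $\chi_-$ in the class $\alpha-\sum\epsilon_i e_i$ \emph{without} altering $K_m$: complex conjugation on $\overline{\CP}^2$ is an orientation-preserving diffeomorphism sending $e\mapsto -e$; isotope it to be the identity on a ball and extend by the identity to obtain a self-diffeomorphism of $X_m$ that flips the chosen $e_i$'s while acting trivially on $H_2(X)$ and on the remaining $e_j$. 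Applying this diffeomorphism to the proper transform gives the desired surface, and now your computation of $|\tilde\alpha\cdot K_m|=|\alpha\cdot K|+m$ is legitimate. With this amendment the argument goes through.
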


\begin{cor}\label{cor:adj}
Let $X$ and $\Sigma$ be as in \cref{thm:kron}.
If $K$ is a Bauer--Furuta basic class, then the inequality \eqref{eq:kron} holds.
\end{cor}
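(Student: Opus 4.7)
The plan is to reduce Corollary \ref{cor:adj} to Kronheimer's Theorem \ref{thm:kron} by exhibiting $K_m = K + E_1 + \cdots + E_m$ as a monopole class on $X_m = X \# m\overline{\CP}^2$. Since $K$ is a Bauer--Furuta basic class of $X$, I can fix a \Spinc structure $\fraks$ on $X$ with $c_1(\fraks) = K$ and $\BF(X,\fraks) \neq 0$.

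First I would iteratively apply Theorem \ref{thm:blowup} to upgrade the non-vanishing on $X$ to a non-vanishing on $X_m$ with the desired first Chern class. A single application with $r = 0$ gives $\BF(X \# \overline{\CP}^2, \fraks \# \fraks_{E}) = \BF(X, \fraks) \neq 0$, and the resulting \Spinc structure has first Chern class $K + E$. Iterating $m$ times yields a \Spinc structure $\fraks_{(m)}$ on $X_m$ with $c_1(\fraks_{(m)}) = K + E_1 + \cdots + E_m = K_m$ and $\BF(X_m, \fraks_{(m)}) \neq 0$.

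Next I would argue that $\BF(X_m, \fraks_{(m)}) \neq 0$ forces $K_m$ to be a monopole class on $X_m$. This is the standard linkage between Bauer--Furuta non-triviality and existence of Seiberg--Witten solutions: $\BF$ is represented by a $\UU(1)$-equivariant finite-dimensional approximation of the Seiberg--Witten map for an arbitrarily chosen Riemannian metric $g$. If for some metric $g$ the Seiberg--Witten moduli space for $(X_m, \fraks_{(m)}, g)$ were empty, then after a small generic perturbation the approximating map would avoid $0$, and radial projection to the codomain sphere would produce an equivariant null-homotopy, contradicting $\BF(X_m, \fraks_{(m)}) \neq 0$. Hence the moduli space is non-empty for every metric, so $K_m$ is a monopole class on $X_m$. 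Applying Theorem \ref{thm:kron} to the embedded surface $\Sigma \subset X$ with this monopole lift then yields the claimed inequality $\chi_-(\Sigma) \geq \alpha \cdot \alpha + |\alpha \cdot K|$.

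The principal obstacle will be the third step, namely justifying that non-vanishing of the Bauer--Furuta invariant implies the monopole class condition uniformly over all metrics. The first two steps are essentially formal consequences of Theorem \ref{thm:blowup}, so the real content is the monopole-class deduction extracted from the Bauer--Furuta machinery; depending on how much self-containedness is desired, it may be cleanest to cite an existing reference rather than repeat the finite-dimensional-approximation argument in detail here.
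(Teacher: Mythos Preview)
Your proposal is correct and follows essentially the same route as the paper: apply Theorem~\ref{thm:blowup} iteratively to promote $K$ to a Bauer--Furuta basic class $K_m$ on $X_m$, then observe that Bauer--Furuta non-vanishing forces the monopole-class condition (since an empty Seiberg--Witten moduli space would make the representing map miss $0$ and hence be equivariantly null-homotopic), and finally invoke Theorem~\ref{thm:kron}. The paper packages the last two steps as Proposition~\ref{prop:basic}; your ``small generic perturbation'' in the null-homotopy argument is unnecessary, but harmless.
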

\begin{proof}
The proof follows from \cref{thm:kron} and \cref{prop:basic} below.
\end{proof}

\begin{prop}\label{prop:basic}
If $K$ is a Bauer--Furuta basic class, then $K_m$ is a monopole class for each non-negative $m$.
\end{prop}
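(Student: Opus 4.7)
The plan is to iterate the blowup formula of Theorem~\ref{thm:blowup} to promote a non-vanishing Bauer--Furuta invariant on $X$ with $c_1=K$ to a non-vanishing Bauer--Furuta invariant on $X_m$ with $c_1=K_m$, and then to invoke the general principle that non-vanishing of the Bauer--Furuta invariant forces the Seiberg--Witten moduli space to be non-empty for every Riemannian metric.

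For the first step, I would start with a \Spinc structure $\fraks$ on $X$ satisfying $c_1(\fraks)=K$ and $\BF(X,\fraks)\neq 0$, which exists by the definition of $\calB(X)$. Applying Theorem~\ref{thm:blowup} with $r=0$ yields $\BF(X\#\overline{\CP}^2,\fraks_0)=\BF(X,\fraks)\neq 0$ with $c_1(\fraks_0)=K+E_1$. Iterating this $m$ times produces a \Spinc structure $\fraks'_m$ on $X_m=X\#m\overline{\CP}^2$ with $c_1(\fraks'_m)=K_m$ and $\BF(X_m,\fraks'_m)\neq 0$. This step is purely mechanical bookkeeping of Chern classes.

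For the second step, I would argue that $\BF(X_m,\fraks'_m)\neq 0$ implies $K_m$ is a monopole class. For any Riemannian metric $g$ on $X_m$, the invariant $\BF(X_m,\fraks'_m)$ is represented by the $\UU(1)$-equivariant stable homotopy class of a finite-dimensional approximation of the monopole map $\mu_g\colon V^+\to W^+$ associated to $g$, and $\mu_g^{-1}(0)$ is precisely the Seiberg--Witten moduli space for $(g,\fraks'_m)$. If this moduli space were empty for some $g$, the map $\mu_g$ would factor through $W^+\smallsetminus\{0\}$, which $\UU(1)$-equivariantly deformation retracts onto $\{\infty\}$ by radial scaling (scaling by positive reals commutes with the linear $\UU(1)$-action on $W$). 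Hence $\mu_g$ would be $\UU(1)$-equivariantly null-homotopic, contradicting $\BF(X_m,\fraks'_m)\neq 0$. Thus the Seiberg--Witten moduli space is non-empty for every metric, which is exactly the monopole class property.

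The main conceptual point, and the place to be careful, is the second step: making the implication ``non-vanishing BF $\Rightarrow$ monopole class'' rigorous. The only real subtlety is ensuring the null-homotopy is genuinely $\UU(1)$-equivariant, but this follows because the retraction uses only positive scalar multiplication, which manifestly commutes with the linear $\UU(1)$-action on $W$. The first step is a straightforward induction, so the bulk of the proof reduces to this standard but essential observation.
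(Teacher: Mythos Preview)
Your proof is correct and follows essentially the same approach as the paper's: both combine Theorem~\ref{thm:blowup} to propagate the Bauer--Furuta basic class to $X_m$, and the observation that an empty Seiberg--Witten moduli space for some metric makes the monopole map miss $0$, hence $\UU(1)$-equivariantly null-homotopic. The only difference is cosmetic ordering (the paper first proves ``BF basic $\Rightarrow$ monopole class'' and then invokes the blowup, while you blow up first), and you supply slightly more detail on the equivariant retraction than the paper does.
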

\begin{proof}
If the Seiberg--Witten equations have no solution for some Riemannian metric, then $\Psi^{-1}(0)$ is empty, where
$\Psi$ is the $\UU(1)$-map  representing the Bauer--Furuta invariant.
This implies that $\Psi$ is $\UU(1)$-homotopic to the trivial map, and therefore the Bauer--Furuta invariant is trivial.
Hence, a Bauer--Furuta basic class is a monopole class.
By \cref{thm:blowup}, $K_m$  is a Bauer--Furuta basic class for each $m$, and therefore it is a monopole class.
\end{proof}

\subsection{Proof of \cref{thm:immersedadjunction}}

\begin{prop}\label{prop:nonneg}
Suppose $X$ is a smooth closed $4$-manifold with an embedded sphere $S$ with self-intersection $-r <0$.
Let $\fraks$ be a \Spinc structure with $\BF(X,\fraks)\neq 0$.
If $|\langle c_1(\fraks),[S]\rangle|+[S]\cdot[S]\geq 0$,
then 
\begin{align*}
\BF(X,\fraks+\alpha^*)&\neq 0 \quad \text{ if } \langle c_1(\fraks),[S]\rangle>0\\
\BF(X,\fraks-\alpha^*)&\neq 0 \quad \text{ if } \langle c_1(\fraks),[S]\rangle<0,
\end{align*}
where $\alpha^*$ is the Poincar\'{e} dual of $[S]$.
\end{prop}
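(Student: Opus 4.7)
The plan is to apply the generalized blowup formula, \cref{thm:negdef}(1), to a decomposition of $X$ along the boundary of a tubular neighborhood of $S$.

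First, I would write $X = X_0 \cup_Y N$, where $N$ is a closed tubular neighborhood of $S$, $X_0$ is the closure of its complement, and $Y = \partial N$. Since $S$ is an embedded $2$-sphere with self-intersection $-r$, $N$ is the disk bundle over $S^2$ of Euler number $-r$, so $Y$ is the lens space $L(r,1)$. As a spherical space form, $L(r,1)$ has $b_1 = 0$ and admits a positive scalar curvature metric, while $N$ deformation retracts to $S^2$ with intersection form $(-r)$, giving $b_1(N) = 0$ and $b_2^+(N) = 0$. Thus the hypotheses on $Y$ and on $N_1 = N_2 = N$ in the setup preceding \cref{thm:negdef} are satisfied, with the paper's ``$X$'' playing the role of $X_0$ and the paper's ``$X_1 = X_2$'' both equal to our $X$.

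Next, I would set $\fraks_1 := \fraks$ and $\fraks_2 := \fraks + \epsilon \alpha^*$, where $\epsilon := \sign\langle c_1(\fraks),[S]\rangle \in \{\pm 1\}$, so that $\fraks_2$ is exactly the Spin$^c$ structure appearing in the conclusion of the proposition. To invoke \cref{thm:negdef} one needs $\fraks_1|_{X_0} = \fraks_2|_{X_0}$, which holds because $\alpha^* = \PD([S])$ is the image under $H^2(X, X_0;\Z) \to H^2(X;\Z)$ of the Thom class of the normal disk bundle of $S$, and therefore restricts trivially to $X_0$. A direct computation using $c_1(\fraks_2) = c_1(\fraks) + 2\epsilon \alpha^*$ and $\alpha^*\cdot\alpha^* = [S]\cdot[S]$ gives
\[
d(\fraks_2) - d(\fraks_1) = \epsilon\langle c_1(\fraks),[S]\rangle + [S]\cdot[S] = |\langle c_1(\fraks),[S]\rangle| + [S]\cdot[S] \geq 0,
\]
where the last inequality is the standing hypothesis. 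Applying \cref{thm:negdef}(1) with $\BF(X,\fraks_1)\neq 0$ then yields $\BF(X,\fraks_2)\neq 0$, which is exactly the conclusion of the proposition.

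I do not foresee a significant obstacle: the entire argument packages the generalized blowup formula, once the boundary of a tubular neighborhood of $S$ is recognized as a rational homology $3$-sphere admitting a positive scalar curvature metric. The only small point that requires care is that $\fraks$ and $\fraks \pm \alpha^*$ agree on $X_0$, but this is immediate from the long exact sequence of the pair $(X, X_0)$ together with the fact that $\alpha^*$ is supported in $N$.
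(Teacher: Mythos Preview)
Your proposal is correct and follows essentially the same approach as the paper: decompose $X$ along the lens space boundary of a tubular neighborhood of $S$, observe that $\fraks$ and $\fraks\pm\alpha^*$ agree on the complement $X_0$, compute $d(\fraks\pm\alpha^*)-d(\fraks)=|\langle c_1(\fraks),[S]\rangle|+[S]\cdot[S]\geq 0$, and apply \cref{thm:negdef}(1). Your write-up is in fact more detailed than the paper's---you justify explicitly why $\alpha^*$ restricts trivially to $X_0$ and why $N$ satisfies $b_1=b_2^+=0$---but the underlying argument is identical.
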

\begin{proof}
Suppose $\langle c_1(\fraks),[S]\rangle>0$.
Let $N$ be a tubular neighborhood of $S$. 
Note that the boundary of $N$ is a lens space which admits a positive scalar curvature metric.
Decompose $X$ as
\[
X=X_0\cup N,
\]
where $X_0=\overline{X\setminus N}$.
Note that $\fraks|_{X_0} = (\fraks+\alpha^*)|_{X_0}$, and
\[
d(\fraks+\alpha^*) = d(\fraks)+\langle c_1(\fraks),[S]\rangle+[S]\cdot[S]\geq d(\fraks).
\]
By \cref{thm:negdef}, we obtain $\BF(X,\fraks+\alpha^*)\neq 0$.
The proof of the case when $\langle c_1(\fraks),[S]\rangle<0$ is similar.
\end{proof}
\begin{proof}[Proof of \cref{thm:immersedadjunction}]
Suppose that $x$ is represented by an immersed sphere with $p$ positive and $n$ negative double points.
Suppose $\langle K,\alpha\rangle = \langle c_1(\fraks),\alpha\rangle\geq0$.
Then in $\overline{X}=X\#(p+n)\overline{\CP}^2$, $\bar{\alpha} = \alpha - 2\sum_{j=1}^p e_j$ is represented by an embedded sphere, where $e_j$ is the homology class of the exceptional divisor in the $j$-th $\overline{\CP}^2$.
Let $E_j$ be the Poincar\'{e} dual of $e_j$ and $\bar{\fraks}$ the \Spinc structure with $c_1(\bar{\fraks})=c_1(\fraks)+\sum_{j=1}^{p+n}E_j$.
By \cref{prop:nonneg}, either $\bar{\alpha}\cdot\bar{\alpha}+\langle c_1(\bar{\fraks}),\bar{\alpha}\rangle\leq -2$ or $\BF(\overline{X},\bar{\fraks})\neq 0$.
Now 
\[
\bar{\alpha}\cdot\bar{\alpha}=\alpha\cdot\alpha-4p,\quad \langle c_1(\bar{\fraks}),\bar{\alpha}\rangle =\langle c_1(\fraks),\alpha\rangle+2p.
\]
Therefore in the first case,
\[
\alpha\cdot\alpha+\langle c_1(\fraks),\alpha\rangle\leq 2p-2.
\]
Otherwise
\[
\BF(\overline{X},\bar{\fraks})\neq 0.
\]
Furthermore, it follows from \cref{thm:blowup} that
\[
\BF(\overline{X},\bar{\fraks}) = \BF(X,\fraks).
\]
The proof of the case  when $\langle K,\alpha\rangle \leq0$ is similar.
\end{proof}

\section{BF dimension and BF homogeneous type}\label{sec:BFdim}
We introduce a diffeomorphism invariant, BF dimension, which is rather weaker than the Bauer--Furuta invariant, but might be easier to handle.
\begin{defi}\label{def:BFdim}
Let $X$ be a closed  $4$-manifold or a compact  $4$-manifold with boundary of SWF-spherical type. 
(Here $b^+_2(X)$ is arbitrary). 
Define $\Delta(X)$ by
\[
\Delta(X)=\{d(\fraks)\,|\, \fraks\text{ is a \Spinc structure on $X$ with }\BF(X,\fraks)\neq 0\}.
\]
Then we define,
\[
\BFdim(X)=
\begin{cases}
\min\Delta(X), & \text{ if $\Delta(X)$ has a minimum element,}\\
-\infty, &\text{ if $\Delta(X)\neq \emptyset$ and $\Delta(X)$ has no lower bound,} \\
\infty, & \text{ if $\Delta(X)= \emptyset$.}
\end{cases}
\]
\end{defi}

\begin{prop}\label{prop:BFdim-nonneg}
Let $X$ be a  $4$-manifold  in \cref{def:BFdim}.
Then $\BFdim(X)=-\infty$ or $-1\leq \BFdim(X)\leq \infty$.
More precisely, the following holds.
\begin{enumerate}
\item If $b_2^+(X)=0$ and $b_2^-(X)\neq 0$, then $\BFdim(X)=-\infty$.
If $b_2^+(X)=0$ and $b_2^-(X)= 0$, then $\BFdim(X)=b_1(X)-1$.
\item If  $b^+_2(X)\geq 1$, then $0\leq \BFdim(X)\leq +\infty$.  
In particular, if $b^+_2(X)= 1$ and $b_1(X)=0$, then $\BFdim(X)=+\infty$.
\end{enumerate}
\end{prop}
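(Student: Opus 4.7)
The plan is a case analysis on $b_2^+(X)$. First consider $b_2^+(X)=0$. By \cref{rem:b+0}, $\BF(X,\fraks)\neq 0$ for every \Spinc structure $\fraks$, so $\Delta(X)$ coincides with the set $\{d(\fraks)\}$ of all virtual dimensions. If $b_2^-(X)\neq 0$, the intersection form on $H^2(X;\Z)/\mathrm{tors}$ is negative definite of positive rank, so one picks $\alpha\in H^2(X;\Z)$ with $\alpha\cdot\alpha<0$. Fixing a base \Spinc structure $\fraks_0$ and considering the family $\fraks_k$ with $c_1(\fraks_k)=c_1(\fraks_0)+2k\alpha$, the expansion $c_1(\fraks_k)^2=c_1(\fraks_0)^2+4k\,c_1(\fraks_0)\cdot\alpha+4k^2\alpha^2\to-\infty$ forces $d(\fraks_k)\to-\infty$, so $\BFdim(X)=-\infty$. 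If instead $b_2^-(X)=0$ as well, then $H^2(X;\Q)=0$ and $c_1(\fraks)^2=0\in\Q$ for all $\fraks$; for closed $X$, substituting into \eqref{eq:SWdim} yields
\[
d(\fraks)=\tfrac{1}{4}\bigl(0-2(2-2b_1(X))-0\bigr)=b_1(X)-1,
\]
independent of $\fraks$. The compact-with-boundary case is handled in parallel by combining the APS index formula with SWF-sphericality of each boundary component. Hence $\BFdim(X)=b_1(X)-1$.

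Now consider $b_2^+(X)\ge 1$. The subcase $b_2^+(X)=1$, $b_1(X)=0$ is immediate from \cref{prop:b+1}: every $\BF(X,\fraks)$ vanishes, so $\Delta(X)=\emptyset$ and $\BFdim(X)=+\infty$. For the remaining lower bound $\BFdim(X)\ge 0$, assume $\BF(X,\fraks)\neq 0$. When $b_1(X)=0$ the class lives in $\pi^{b_2^+(X)-1}(\CP^{l-1})$ with $l=\ind_\C D(X,\fraks)$. The space $\CP^{l-1}$ is a finite CW complex of real dimension $2l-2$ (and is empty for $l\le 0$, a point for $l=1$). The standard vanishing $\pi^k(Y)=0$ for $k$ exceeding the dimension of a finite CW complex $Y$ then kills the ambient cohomotopy group whenever $b_2^+(X)-1>2l-2$, equivalently $d(\fraks)=2l-1-b_2^+(X)<0$. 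Thus $\BF(X,\fraks)\neq 0$ forces $d(\fraks)\ge 0$, hence $\BFdim(X)\ge 0$. The case $b_1(X)>0$ is analogous using the equivariant cohomotopy group $\pi^0_{\UU(1)}(Pic(X);\lambda)$, where the base $Pic(X)$ is a torus of dimension $b_1(X)$; the same dimension-counting principle applied to the Thom construction associated to $\lambda$ again forces vanishing when $d(\fraks)<0$.

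The main obstacle is the dimensional vanishing argument in the second case: one must precisely identify the (possibly equivariant, possibly twisted) stable cohomotopy group containing $\BF(X,\fraks)$ and verify its vanishing when $d(\fraks)<0$. The appeal to $\pi^k(Y)=0$ for $k>\dim Y$ is standard and is implicit in the computations of \cref{cohomotopy}. The compact-with-boundary version of the second subcase of the first case, together with the $b_1(X)>0$ version of the second case, require more care in the index- and equivariant-theoretic bookkeeping but proceed on the same dimensional principle.
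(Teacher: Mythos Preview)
Your proof is correct and follows the same overall structure as the paper's. Part (1) is essentially identical: both invoke \cref{rem:b+0} and then observe that $\Delta(X)$ is unbounded below when $b_2^-(X)>0$ and constant equal to $b_1(X)-1$ when $b_2^-(X)=0$; you are just more explicit about why.

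For part (2) there is a small difference in emphasis. The paper dispatches the lower bound in one sentence: when $b_2^+(X)\ge 1$ and $d(\fraks)<0$, the monopole map is $\UU(1)$-equivariantly null-homotopic, so $\BF(X,\fraks)=0$. This handles all values of $b_1$ and $b_2^+$ uniformly at the level of the equivariant representative. You instead pass through the non-equivariant identification $\BF(X,\fraks)\in\pi^{b_2^+-1}(\CP^{l-1})$ and invoke the vanishing $\pi^k(Y)=0$ for $k>\dim Y$. This is the same underlying dimension count, but note that the identification with $\pi^{b_2^+-1}(\CP^{l-1})$ as stated in the paper requires $b_1=0$ and $b_2^+\ge 2$, so your argument as written leaves the cases $b_2^+=1,\ b_1>0$ and $b_1>0$ in general to a somewhat informal ``analogous'' remark. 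The paper's equivariant formulation avoids this bookkeeping. Your acknowledgement of this in the final paragraph is accurate; replacing the cohomotopy-group argument with the direct equivariant null-homotopy statement would close the gap cleanly.
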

\begin{proof}
(1) If $b_2^+(X)=0$, then $\BF(X,\fraks)\neq 0$ for every \Spinc structure $\fraks$ on $X$ (\cref{rem:b+0}).
If $b_2^-(X)\neq 0$,  then $\Delta(X)$ has no lower bound.
If $b_2^-(X) = 0$,  then $d(\fraks)=b_1(X)-1$ for every $\fraks$.

(2) Let $\fraks$ be a \Spinc structure on $X$.
If $b^+_2(X)\geq 1$ and $d(\fraks)<0$, then the monopole map for $(X,\fraks)$ is null-homotopic $\UU(1)$-equivariantly, and therefore $\BF(X,\fraks)=0$. 
This implies that  $\dim_{\BF}(X) \geq 0$.
If $b^+_2(X)= 1$ and $b_1(X)=0$, then \cref{prop:b+1} implies that $\Delta(X)=\emptyset$.
\end{proof}

\begin{prop}\label{prop:BFdim-sum}
Let $X=X_1\cup_Y X_2$ be a gluing decomposition along $Y$ of SWF-spherical type.
Suppose $\BFdim(X_i)=d_i$ for $i=1,2$.
Then 
\begin{enumerate}
\item If $d_1$ and $d_2$ are finite, then $d_1+d_2+1\leq \BFdim(X)\leq+\infty$.
\item If one of $d_i$ is $+\infty$, then $\BFdim(X)=+\infty$.
\end{enumerate}
\end{prop}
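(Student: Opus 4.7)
The proof rests on two ingredients: the additivity formula
\[
d(\fraks) \;=\; d(\fraks_1) + d(\fraks_2) + 1, \qquad \fraks_i := \fraks|_{X_i},
\]
for virtual dimensions under gluings along SWF-spherical boundaries, together with the nonvanishing principle \cref{prop:nonvanishing}, which states that $\BF(X,\fraks)\neq 0$ forces $\BF(X_i,\fraks_i)\neq 0$ for each $i$.

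To establish the dimension additivity, I would appeal to the gluing formula \eqref{eq:BFgluing}, which realizes $\BF(X,\fraks)$ as the smash product $\Psi_{(X_1,\fraks_1)} \wedge \Psi_{(X_2,\fraks_2)}$. Since $Y$ is SWF-spherical with $b_1(Y)=0$, the boundary contributions $\SWF(Y,\fraks_Y)$ and $\SWF(-Y,\fraks_Y)$ are mutual duals, so the identity $\lambda_X = \lambda_{X_1} + \lambda_{X_2}$ holds in the appropriate $KO_{\UU(1)}$-group. Comparing complex and real ranks yields $\ind_\C D(X,\fraks) = \ind_\C D(X_1,\fraks_1) + \ind_\C D(X_2,\fraks_2)$ and $b_2^+(X) = b_2^+(X_1) + b_2^+(X_2)$; combined with $b_1(X) = b_1(X_1) + b_1(X_2)$ from Mayer--Vietoris, substitution into the defining formula $d(\fraks) = 2\ind_\C D(X,\fraks) - (1 - b_1(X) + b_2^+(X))$ produces the additive formula, the residual $+1$ arising from the single correction term that survives the grouping.

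With these two ingredients in hand, part~(1) is immediate. If $\Delta(X) = \emptyset$ then $\BFdim(X) = +\infty$ and the claim is trivial; otherwise, any $\fraks \in \Delta(X)$ has $\BF(X_i,\fraks_i)\neq 0$ by \cref{prop:nonvanishing}, so $d(\fraks_i) \geq d_i$ by the minimality defining $\BFdim(X_i) = d_i$, and the dimension additivity yields $d(\fraks) \geq d_1 + d_2 + 1$. Taking the minimum over such $\fraks$ gives $\BFdim(X) \geq d_1 + d_2 + 1$. For part~(2), assuming without loss of generality that $d_1 = +\infty$, i.e.\ $\Delta(X_1) = \emptyset$, the contrapositive of \cref{prop:nonvanishing} forces $\BF(X,\fraks) = 0$ for every \Spinc structure $\fraks$ on $X$; hence $\Delta(X) = \emptyset$ and $\BFdim(X) = +\infty$.

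The conceptual content is therefore localized to the dimension additivity lemma and the invocation of \cref{prop:nonvanishing}; the only subtlety lies in the rank identity for $\lambda_X$, which reduces to the standard duality $\SWF(-Y,\fraks_Y) \simeq \SWF(Y,\fraks_Y)^*$ for SWF-spherical 3-manifolds reviewed in \cref{subsec:BF}. Once that is in place, the remainder is direct bookkeeping from the definition of $\BFdim$.
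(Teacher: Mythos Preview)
Your proof is correct and follows essentially the same approach as the paper: both rest on the additivity $d(\fraks)=d(\fraks_1)+d(\fraks_2)+1$ together with \cref{prop:nonvanishing}. The paper simply asserts the additivity formula and argues part~(1) contrapositively (if $d(\fraks)\le d_1+d_2$ then some $d(\fraks_i)<d_i$, hence $\BF(X_i,\fraks_i)=0$), whereas you supply a sketch of the additivity via index gluing and Mayer--Vietoris and argue (1) directly; the content is the same.
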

\begin{proof}
(1) For a \Spinc structure $\fraks$ on $X$, let $\fraks_i=\fraks|_{X_i}$ for $i=1,2$. 
Then $d(\fraks)=d(\fraks_1) +d(\fraks_2)+1$.
If $d(\fraks)\leq d_1+d_2$ for a \Spinc structure $\fraks$ on $X$, then $d(\fraks_i)<d_i$ for at least one of $i=1,2$, and therefore $\BF(X_i,\fraks_i)=0$.  
By \cref{prop:nonvanishing},  $\BF(X,\fraks) = 0$.

(2) If $d_1$ is $+\infty$, then $\BF(X_1,\fraks_1)=0$ for every \Spinc structure $\fraks_1$ on $X_1$. 
\cref{prop:nonvanishing} implies that $\BF(X,\fraks)=0$ for every \Spinc structure $\fraks$ on $X$ and therefore the BF dimension of $X$ is $+\infty$. 
\end{proof}

\begin{cor}\label{cor:BFdim-conn}
For each $i=1,2$, let $X_i$ be a closed $4$-manifold with  $\BFdim(X_i)=d_i$.
Then 
\begin{enumerate}
\item If $d_1$ and $d_2$ are finite, then $d_1+d_2+1\leq \BFdim(X_1\#X_2)\leq+\infty$.
\item If one of $d_i$ is $+\infty$, then $\BFdim(X_1\#X_2)=+\infty$.
\end{enumerate}
\end{cor}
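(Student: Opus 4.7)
The plan is to reduce \cref{cor:BFdim-conn} to the already-established \cref{prop:BFdim-sum} by realizing the connected sum as a gluing decomposition along $S^3$. Write $X_1 \# X_2 = X_1^\circ \cup_{S^3} X_2^\circ$, where $X_i^\circ$ denotes $X_i$ with the interior of a small closed $4$-ball removed. Since $S^3$ carries a positive scalar curvature metric, it is SWF-spherical for its unique \Spinc structure, so this is a gluing decomposition along a rational homology $3$-sphere of SWF-spherical type, to which \cref{prop:BFdim-sum} applies.

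The only thing to check is that $\BFdim(X_i^\circ) = \BFdim(X_i)$ for $i=1,2$. The \Spinc structures on $X_i^\circ$ are in canonical bijection with those on $X_i$ via restriction, since $B^4$ admits a unique \Spinc structure. Applying the gluing formula \eqref{eq:BFgluing} to $X_i = X_i^\circ \cup_{S^3} B^4$ and invoking \cref{lem:inclusion} to identify $\Psi_{(B^4,\,\mathrm{std})}$ with the inclusion of $S^0$ shows that $\BF(X_i,\fraks)$ and $\BF(X_i^\circ,\fraks|_{X_i^\circ})$ vanish simultaneously. The $d$-values also match: the relative formula with $b_1(B^4)=b_2^+(B^4)=0$ and $\ind_{\C}D(B^4,\mathrm{std})=0$ gives $d(\mathrm{std}\text{ on }B^4) = -1$, and the additivity $d(\fraks) = d(\fraks|_{X_i^\circ}) + d(\mathrm{std}\text{ on }B^4) + 1$ used in the proof of \cref{prop:BFdim-sum} then yields $d(\fraks) = d(\fraks|_{X_i^\circ})$. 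Hence $\Delta(X_i^\circ) = \Delta(X_i)$, so the BF dimensions agree.

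With this identification in hand, both parts of the corollary follow immediately from the corresponding parts of \cref{prop:BFdim-sum} applied to the decomposition $X_1 \# X_2 = X_1^\circ \cup_{S^3} X_2^\circ$. The only step requiring any care is the verification that filling in a $4$-ball along an $S^3$ boundary component does not alter the BF dimension, and this is a direct consequence of \cref{lem:inclusion}, so I do not anticipate a real obstacle in this proof.
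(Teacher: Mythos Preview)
Your proof is correct and follows the same approach the paper has in mind: the corollary is stated immediately after \cref{prop:BFdim-sum} with no separate proof, so the intended argument is exactly the reduction to that proposition via the decomposition $X_1\#X_2=X_1^\circ\cup_{S^3}X_2^\circ$. You have simply made explicit the one detail the paper leaves tacit, namely that $\BFdim(X_i^\circ)=\BFdim(X_i)$, and your justification via the gluing formula and \cref{lem:inclusion} is the right one.
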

\cref{prop:BFdim-sum} and \cref{prop:BFdim-nonneg} imply the following corollaries.
\begin{cor}\label{cor:BFdim-sum2}
Let $X=X_1\cup_{Y_1}\cdots\cup_{Y_{l-1}}X_l$ be a gluing decomposition along rational homology $3$-spheres of SWF-spherical type.
If $b_2^+(X_i)\geq 1$ for every $i$, then $l-1\leq\BFdim(X)\leq+\infty$.
\end{cor}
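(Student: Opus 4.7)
The plan is to induct on $l$, using \cref{prop:BFdim-sum} as the inductive step and \cref{prop:BFdim-nonneg}(2) to bound the BF dimension of each piece from below. The upper bound $\BFdim(X)\leq +\infty$ is immediate from \cref{def:BFdim}, so the entire content is in establishing the lower bound $\BFdim(X)\geq l-1$.

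For the base case $l=1$ we have $X=X_1$ with $b_2^+(X_1)\geq 1$, so \cref{prop:BFdim-nonneg}(2) gives $\BFdim(X_1)\geq 0=l-1$. For the inductive step, set $\tilde{X}_i=X_1\cup_{Y_1}\cdots\cup_{Y_{i-1}}X_i$ and suppose $\BFdim(\tilde{X}_i)\geq i-1$. Since each $Y_j$ is a rational homology $3$-sphere of SWF-spherical type, the boundary of $\tilde{X}_i$ is itself of SWF-spherical type (its components form a subset of the unglued boundary components of $X_1,\ldots,X_i$), so the decomposition $\tilde{X}_{i+1}=\tilde{X}_i\cup_{Y_i}X_{i+1}$ falls into the setting of \cref{prop:BFdim-sum}. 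If either $\BFdim(\tilde{X}_i)$ or $\BFdim(X_{i+1})$ equals $+\infty$, then part (2) of that proposition gives $\BFdim(\tilde{X}_{i+1})=+\infty\geq i$. Otherwise both are finite, and since $\BFdim(X_{i+1})\geq 0$ by \cref{prop:BFdim-nonneg}(2), part (1) yields
\[
\BFdim(\tilde{X}_{i+1})\;\geq\; \BFdim(\tilde{X}_i)+\BFdim(X_{i+1})+1\;\geq\; (i-1)+0+1\;=\;i,
\]
which closes the induction and delivers $\BFdim(X)=\BFdim(\tilde{X}_l)\geq l-1$.

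The argument is essentially bookkeeping, and I do not expect a genuine obstacle. The only subtlety to watch while writing it up is the verification, at each inductive stage, that $\tilde{X}_i$ has boundary of SWF-spherical type so that \cref{prop:BFdim-sum} genuinely applies; as noted above this is automatic from the hypothesis on the pieces $X_j$ and the gluing surfaces $Y_j$.
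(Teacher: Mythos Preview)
Your proof is correct and is exactly the argument the paper has in mind: it states that the corollary follows from \cref{prop:BFdim-sum} and \cref{prop:BFdim-nonneg} without further detail, and your induction on $l$ is the natural way to make this precise. The only point worth noting is your check that $\tilde{X}_i$ has boundary of SWF-spherical type, which the paper leaves implicit but is indeed needed to invoke \cref{prop:BFdim-sum}.
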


\begin{cor}\label{cor:BFdim-conn2}
Suppose closed oriented smooth $4$-manifolds $X_1,\ldots,X_l$ have positive  $b^+_2$.
Then  $l-1\leq\BFdim(\#_{i=1}^lX)\leq+\infty$.
\end{cor}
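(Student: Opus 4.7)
The plan is to realize the connected sum $\#_{i=1}^l X_i$ as a chain gluing decomposition along $3$-spheres and then invoke \cref{cor:BFdim-sum2}.

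First, I would write $\#_{i=1}^l X_i$ as $\tilde{X}_1 \cup_{S^3} \tilde{X}_2 \cup_{S^3} \cdots \cup_{S^3} \tilde{X}_l$, where $\tilde{X}_1 = X_1 \setminus \mathring{B}^4$, $\tilde{X}_l = X_l \setminus \mathring{B}^4$, and for each $1 < i < l$, $\tilde{X}_i$ is obtained from $X_i$ by removing the interiors of two disjoint closed $4$-balls. Each $\tilde{X}_i$ is a compact oriented $4$-manifold whose boundary is a disjoint union of copies of $S^3$, and gluing successively along these $3$-spheres recovers the standard connected sum.

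Next, I would verify the hypotheses of \cref{cor:BFdim-sum2}. The $3$-sphere $S^3$ is a rational homology $3$-sphere with $b_1(S^3)=0$, and it carries a positive scalar curvature metric, hence by the remark following \cref{def:spherical} it is SWF-spherical for every $\spc$-structure. Thus each $\tilde{X}_i$ has boundary of SWF-spherical type, so the decomposition is a gluing decomposition along rational homology $3$-spheres of SWF-spherical type. Since removing open balls does not affect $b_2^+$, we have $b_2^+(\tilde{X}_i) = b_2^+(X_i) \geq 1$ for every $i$.

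Finally, applying \cref{cor:BFdim-sum2} to this decomposition yields $l-1 \leq \BFdim(\#_{i=1}^l X_i) \leq +\infty$, which is the desired inequality. There is no real obstacle here; the only thing to be careful about is the identification of the connected sum with the chain gluing, which is routine.
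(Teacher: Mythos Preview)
Your proof is correct and follows essentially the same route as the paper. The paper states that \cref{cor:BFdim-conn2} (together with \cref{cor:BFdim-sum2}) follows from \cref{prop:BFdim-sum} and \cref{prop:BFdim-nonneg}; your argument simply makes this explicit by realizing the connected sum as a chain gluing along copies of $S^3$ and invoking the already-established \cref{cor:BFdim-sum2}, which is exactly the intended specialization.
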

\begin{proof}[Proof of \cref{thm:decomp}]
By \cref{cor:BFdim-sum2}, we have $k-1\leq \BFdim(X)\leq+\infty$ and $l-1\leq\BFdim(X^\prime)\leq+\infty$. 
By \cref{thm:BF-nonvan}, $X$ has a \Spinc structure $\fraks$ such that $\BF(X,\fraks)\neq 0$ and $d(\fraks)=k-1$.
Therefore $\BFdim(X)=k-1$. 
If $l>k$, then $\BFdim(X^\prime)>\BFdim(X)$, and therefore $X$ is not diffeomorphic to $X^\prime$.

Since $b_1(X)=0$, we have $b_1(X_j^\prime)=0$ for every $j$.
Suppose $b_2^+(X_j^\prime)=1$ for some $j$. 
Then $\BF(X_j^\prime,\fraks|_{X_j^\prime})=0$ by \cref{prop:b+1}, and therefore $\BF(X,\fraks)\neq 0$. 
This is a contradiction.
Hence $b_2^+(X_j^\prime)\geq 2$ for every $j$.
\end{proof}

Next we introduce the notion of \textit{BF homogeneous type} which generalizes the Seiberg--Witten simple type.
\begin{defi}\label{def:BFhom}
Let $X$ be a closed  $4$-manifold or a compact  $4$-manifold with boundary of SWF-spherical type with $b_2^+(X)\geq 1$.
We call $X$ has the \textit{$d$-dimensional BF homogeneous type}, if  $\BF(X,\fraks)=0$ whenever $d(\fraks)\neq d$.
\end{defi}
%
\begin{rem}\label{rem:trivial}
By definition, the following conditions (1), (2) and (3) are equivalent:
\begin{enumerate}
\item $X$ is of the $d$-dimensional BF homogeneous type for more than two $d$'s.
\item $X$ is of the $d$-dimensional BF homogeneous type for every $d$.
\item $X$ has no \Spinc structure $\fraks$ with $\BF(X,\fraks)\neq0$.  
\end{enumerate}
\end{rem}
\begin{defi}
We call the $4$-manifold $X$ satisfying one of the equivalent conditions (1)(2)(3) in \cref{rem:trivial} is of the \textit{BF trivial type}.
\end{defi}
\begin{rem}
In the case when $b_2^+-b_1\geq 2$,
if $X$ is of the $0$-dimensional BF homogeneous type, then $X$ is of Seiberg--Witten simple type.
It would be interesting to ask whether the converse holds. 
\end{rem}
\begin{rem}
If $X$ is of the $d$-dimensional BF homogeneous non-trivial type  for some $\fraks$, then the BF-dimension of $X$ is $d$.
\end{rem}

Taubes proved that the symplectic $4$-manifolds are of  Seiberg--Witten simple type \cite[Theorem 0.2(6)]{Taubes}.
This is generalized to the following.
\begin{prop}\label{prop:symBFhom}
A closed symplectic $4$-manifold $X$ with $b_2^+\geq 2$ has the $0$-dimensional BF homogeneous type.
Furthermore, if $b_2^+-b_1\geq 2$, then $X$ is not of  the  BF trivial type.
\end{prop}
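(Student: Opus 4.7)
The plan is to reduce the proposition to Taubes' theorems on symplectic 4-manifolds, promoted from the Seiberg--Witten setting to the Bauer--Furuta refinement.

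First I would split by the sign of $d(\fraks)$. For $d(\fraks)<0$, \cref{prop:BFdim-nonneg}(2) gives $\BF(X,\fraks)=0$ directly, since $b_2^+(X)\geq 2\geq 1$. The case $d(\fraks)=0$ is vacuous for the $0$-dimensional BF homogeneous condition.

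The main case is $d(\fraks)>0$. Here I would invoke Taubes' symplectic perturbation: on a closed symplectic $4$-manifold $(X,\omega)$ with $b_2^+\geq 2$, the Seiberg--Witten equations perturbed by $r\omega$ admit no solutions for $r$ sufficiently large whenever $d(\fraks)>0$. Adding $r\omega$ is a compact perturbation of the monopole map, so the $\UU(1)$-equivariant stable cohomotopy class $\BF(X,\fraks)$ is unchanged. Taubes' uniform a priori estimates then imply that a suitable finite-dimensional approximation $\mu_r\colon TF\to S^V$ avoids the origin $0\in V$. A proper $\UU(1)$-equivariant pointed map of this form whose image misses $0$ factors through $S^V\setminus\{0\}$, which is $\UU(1)$-equivariantly contractible to the basepoint $\infty$ via the dilation $w\mapsto w/(1-t)$. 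Hence $\BF(X,\fraks)=0$.

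For the second statement, the canonical \Spinc structure $\fraks_{\mathrm{can}}$ satisfies $d(\fraks_{\mathrm{can}})=0$, and Taubes' non-vanishing theorem \cite{Taubes0} yields $\SW(X,\fraks_{\mathrm{can}})=\pm 1$. The assumption $b_2^+-b_1\geq 2$ is precisely the hypothesis of \cref{thm:BF-SW}, so the homomorphism from the relevant cohomotopy group to $\Z$ provided by that theorem sends $\BF(X,\fraks_{\mathrm{can}})$ to $\pm 1\neq 0$. Consequently $\BF(X,\fraks_{\mathrm{can}})\neq 0$, and by \cref{rem:trivial} $X$ is not of BF trivial type.

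The principal obstacle is the rigorous passage from Taubes' empty-moduli-space statement to vanishing of a stable $\UU(1)$-cohomotopy class. While the heuristic \emph{``missing the origin implies null-homotopic''} is clear, one must confirm that Taubes' $C^0$- and Sobolev-estimates persist uniformly enough that the finite-dimensional approximation itself misses the origin on arbitrarily large balls, and then verify that the contracting homotopy can be carried out $\UU(1)$-equivariantly and compatibly with the trivial real summands arising from $H^+(X;\R)$.
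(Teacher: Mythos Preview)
Your argument for the second statement (non-triviality when $b_2^+-b_1\geq 2$) is correct and matches the paper.

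For the first statement, however, your central lemma is false: it is \emph{not} true that the $r\omega$-perturbed Seiberg--Witten equations admit no solutions for large $r$ whenever $d(\fraks)>0$. Taubes' asymptotic analysis shows that large-$r$ solutions correspond to $J$-holomorphic curves in the class $E$ (where $\fraks=\fraks_0+E$), and such curves can certainly exist when $d(\fraks)=E\cdot E - K_X\cdot E>0$. For a concrete counterexample, take $X$ a smooth quintic in $\CP^3$ (so $K_X=H$, $H^2=5$, $b_2^+=9$) and $E=2H$: then $d(\fraks_0+2H)=20-10=10>0$, yet $2H$ is represented by abundant smooth holomorphic curves (quadric sections), so the large-$r$ moduli space is nonempty --- indeed ten-dimensional. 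Seiberg--Witten simple type says only that the \emph{count} after cutting down by $\mu$-classes vanishes, not that the moduli space is empty; hence your ``missing the origin'' mechanism never gets started, and the obstacle you flag in your last paragraph is not merely technical but fatal.

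The paper's proof runs in the opposite direction. If $\BF(X,\fraks_0+E)\neq 0$, then solutions exist for \emph{every} perturbation, in particular for the large-$r$ Taubes perturbation; Taubes' $\SW\Rightarrow\mathrm{Gr}$ analysis then produces an embedded symplectic curve representing the Poincar\'e dual of $E$. At that point one feeds this curve into the remainder of Taubes' own proof of \cite[Theorem~0.2(6)]{Taubes}, whose adjunction-based structural argument forces $d(\fraks_0+E)=0$. The key observation is that Taubes' route to simple type goes through the existence of a curve, and that first step requires only nonvanishing of the stable cohomotopy class, not of the numerical invariant.
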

\begin{proof}
The proof of \cite[Theorem 0.2(6)]{Taubes} also works in this case.
There is a unique canonical \Spinc structure $\fraks_0$ on a symplectic $4$-manifold $X$, and every \Spinc structure on $X$ is written as $\fraks_0+E$ for some $E\in H^2(X;\Z)$.
If $\BF(X,\fraks_0+E)\neq 0$, then we can find an embedded symplectic curve which represents the Poincar\'{e} dual of $E$ by the proof of Theorem 0.1 of \cite{Taubes}.
Then the rest of the proof is same with that of \cite[Theorem 0.2(6)]{Taubes}.
Furthermore, if $b_2^+-b_1\geq 2$, then $\BF(X,\fraks_0)\neq 0$ by \cref{thm:BF-SW} and the non-vanishing theorem of the Seiberg--Witten invariant due to  Taubes \cite{Taubes0}.
\end{proof}

The next proposition is a consequence of \cref{cor:nonvanishing}.
\begin{prop}\label{prop:hom-conn}
Let $X=X_1\cup_Y X_2$ be a gluing decomposition along $Y$ of SWF-spherical type.
For each $i=1,2$, suppose $X_i$ is of the $d_i$-dimensional BF homogeneous type.
Then $X$ has the $(d_1+d_2+1)$-dimensional BF homogeneous type.
\end{prop}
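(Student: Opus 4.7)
The plan is short: combine the additivity of the virtual dimension $d(\fraks)$ across a rational homology $3$-sphere of SWF-spherical type with the nonvanishing criterion \cref{prop:nonvanishing}. Given any \Spinc structure $\fraks$ on $X$, let $\fraks_i = \fraks|_{X_i}$. The identity
\[
d(\fraks) = d(\fraks_1) + d(\fraks_2) + 1
\]
is exactly the one used in the proof of \cref{prop:BFdim-sum}; it follows from the definition $d(\fraks) = 2\ind_\C D - (1 - b_1 + b_2^+)$ together with the additivity of the index of the Dirac operator under the gluing (because $Y$ is a rational homology $3$-sphere of SWF-spherical type, so the corresponding Floer correction cancels between the two pieces) and the Mayer--Vietoris additivity $b_1(X) = b_1(X_1) + b_1(X_2) = 0$ and $b_2^+(X) = b_2^+(X_1) + b_2^+(X_2)$.

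Now suppose $d(\fraks) \neq d_1 + d_2 + 1$. The displayed identity forces $d(\fraks_i) \neq d_i$ for at least one $i \in \{1,2\}$. The $d_i$-dimensional BF homogeneous type of $X_i$ then gives $\BF(X_i, \fraks_i) = 0$ for that index $i$. By the contrapositive of \cref{prop:nonvanishing}, $\BF(X, \fraks) = 0$. Since $\fraks$ was arbitrary, this verifies that $X$ is of $(d_1 + d_2 + 1)$-dimensional BF homogeneous type.

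There is no genuine obstacle: every piece is already in place in \cref{sec:blowup}. The only point worth flagging is that the restriction map $\fraks \mapsto (\fraks_1, \fraks_2)$ is globally defined at the level of isomorphism classes, so the argument covers every \Spinc structure on $X$ and not merely those arising as a gluing of prescribed pieces. One could also state the stronger observation that the argument never uses $d_i \geq 0$, so the statement holds verbatim in all the numerical ranges allowed by \cref{prop:BFdim-nonneg}.
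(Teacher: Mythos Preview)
Your proof is correct and follows essentially the same approach as the paper: restrict $\fraks$ to each piece, use the additivity $d(\fraks)=d(\fraks_1)+d(\fraks_2)+1$, and appeal to the contrapositive of the nonvanishing criterion (\cref{prop:nonvanishing}). One harmless slip: in your Mayer--Vietoris remark you write $b_1(X)=b_1(X_1)+b_1(X_2)=0$, but nothing in the hypotheses forces $b_1(X_i)=0$; only the additivity $b_1(X)=b_1(X_1)+b_1(X_2)$ is needed (and true), and the dimension identity already follows from that.
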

\begin{proof}
Let $\fraks$ be a \Spinc structure on $X$ and $\fraks_i=\fraks|_{X_i}$  for each $i=1,2$.
By \cref{cor:nonvanishing}, if one of $\BF(X_i,\fraks_i)$ is zero, then $\BF(X,\fraks)$ is also zero.
Note that $d(\fraks) = d(\fraks_1)+d(\fraks_2)+1$.
Therefore the only Bauer--Furuta invariants of dimension $d_1+d_2+1$ can be non-zero.
\end{proof}

\begin{cor}\label{cor:sympconn}
Let $X_1,\ldots,X_k$ be closed symplectic $4$-manifolds with $b_2^+(X_i)\geq 2$.
Then $X_1\#\cdots\#X_k$ has the $(k-1)$-dimensional BF homogeneous type.
\end{cor}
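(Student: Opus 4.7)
The plan is to proceed by induction on $k$, using \cref{prop:symBFhom} as the base case and \cref{prop:hom-conn} as the inductive step. Since a connected sum $X_1\#X_2$ is a gluing decomposition along $S^3$, which is SWF-spherical (it admits a positive scalar curvature metric, and in fact has trivial Seiberg--Witten Floer homotopy type), \cref{prop:hom-conn} applies directly to connected sums.

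For the base case $k=1$, the manifold $X_1$ is a closed symplectic $4$-manifold with $b_2^+(X_1)\geq 2$, so by \cref{prop:symBFhom} it is of $0$-dimensional BF homogeneous type, matching $k-1=0$.

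For the inductive step, assume the result for $k-1$, so that $X_1\#\cdots\#X_{k-1}$ is of $(k-2)$-dimensional BF homogeneous type. Note that $b_2^+(X_1\#\cdots\#X_{k-1})=\sum_{i=1}^{k-1}b_2^+(X_i)\geq 2(k-1)\geq 1$, so the hypothesis $b_2^+\geq 1$ in the definition of BF homogeneous type is satisfied. The manifold $X_k$ is of $0$-dimensional BF homogeneous type by \cref{prop:symBFhom}. Writing
\[
X_1\#\cdots\#X_k=(X_1\#\cdots\#X_{k-1})\cup_{S^3}(X_k\setminus\mathring{D}^4)
\]
as a gluing decomposition along $S^3$ (which is of SWF-spherical type), \cref{prop:hom-conn} yields that $X_1\#\cdots\#X_k$ is of $((k-2)+0+1)=(k-1)$-dimensional BF homogeneous type, completing the induction.

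There is no real obstacle here; the statement is essentially a formal consequence of the additivity of BF homogeneous dimensions under connected sum (\cref{prop:hom-conn}) combined with Taubes' non-vanishing theorem packaged in \cref{prop:symBFhom}. The only minor point to verify is that in applying \cref{prop:hom-conn} to a connected sum we are genuinely in the setting of a gluing decomposition along an SWF-spherical rational homology $3$-sphere, which holds since $S^3$ admits a positive scalar curvature metric.
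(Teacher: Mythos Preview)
Your proof is correct and follows exactly the approach the paper intends: the corollary is stated immediately after \cref{prop:hom-conn} and \cref{prop:symBFhom} with no explicit proof, and the obvious induction you carry out is precisely how it is meant to be deduced. The only cosmetic point is that in the displayed gluing decomposition both pieces should have a ball removed (so that each has boundary $S^3$); this is harmless since, as the paper notes, capping off an $S^3$ boundary with a $4$-ball does not change the relative Bauer--Furuta invariants, and hence does not affect BF homogeneous type.
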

\begin{proof}[Proof of \cref{thm:0decomp}]
By \cref{thm:BF-nonvan}, $\BF(X,\fraks)\neq 0$.
This fact with \cref{prop:hom-conn} implies that $X$ is of the $(k-1)$-dimensional BF homogeneous non-trivial type.
Since $X^\prime$ has the same properties, we have $k=l$, and $X^\prime$ satisfies the conditions in \cref{thm:BF-nonvan}.
\end{proof}

\section{Blowup simple type}\label{Y:sec:blow-up}
We introduce the notion of \textit{blowup simple type}, which generalizes Seiberg--Witten simple type from the viewpoint of the blowup formula. This property plays an important role for determining { the Bauer--Furuta invariants of 4-manifolds obtained by logarithmic transformations in a fishtail neighborhood.}

We first recall relevant definitions.

\begin{defi}\label{Y:def:SW simple type}
Suppose that $b_2^+(X)\geq 2$.
(1) A second cohomology class $K$ of $X$ is called  a {\it Seiberg--Witten basic class}, if there exists a \Spinc structure $\fraks$ on $X$ such that $c_1(\fraks)=K$ and $\mathrm{SW}_X(\fraks)\neq 0$, where $\mathrm{SW}_X(\fraks)$ denotes  the (integer valued) Seiberg--Witten invariant for $\fraks$.  Furthermore, if $\mathrm{SW}_X(\fraks)\not\equiv 0 \pmod{2}$, then $K$ is called a {\it mod $2$ Seiberg--Witten basic class}. 

(2) {The $4$-manifold $X$} is called of {\it Seiberg--Witten $($resp.\ mod $2$ Seiberg--Witten$)$ simple type}, if every Seiberg--Witten $($resp.\ mod $2$ Seiberg--Witten$)$ basic class $K$ satisfies $d_X(K)=0$, where $d_X(K)=\frac{1}{4}(K^2-2\chi(X)-3\sigma(X))$, and $\chi(X)$ and $\sigma(X)$ respectively denote the Euler characteristic and the signature of $X$. 
\end{defi} 

In the rest of this section, { for a positive integer $k$, let $e_1,\dots, e_k$ be the standard orthogonal basis of $H_2(k\overline{\CP}^2)=\oplus_k H_2(\overline{\CP}^2)$, and let each $E_i\in H^2(k\overline{\CP}^2)$ be the Poincar\'{e} dual of $e_i$. In the case when $k=1$, we often denote $e_1$ and $E_1$ respectively by $e$ and $E$.} Here we introduce the notion of blowup simple type. 

\begin{defi}\label{Y:def:blowup simple type}
(1) A Bauer--Furuta (resp.\ Seiberg--Witten) basic class $K$ of $X$ {is} called \textit{BF $($resp.\ SW$)$ simple}, if $K+lE$ is not a Bauer--Furuta (resp.\ Seiberg--Witten) basic class  of $X\#\overline{\CP}^2$ for any integer $l\neq \pm 1$. 
It {is} often called BF $($resp.\ SW$)$ simple in $X$, emphasizing that $K$ is a basic class of $X$. 

(2) The $4$-manifold $X$ {is} called of \textit{BF $($resp.\ SW$)$ blowup simple type}, if every Bauer--Furuta (resp.\ Seiberg--Witten) basic class is of BF $($resp.\ SW$)$ simple. 
\end{defi}

\begin{rem}
(1) If  $(X,\fraks)$ satisfies Condition ($\ast$) in \cref{sec:introduction} and $K=c_1(\fraks)$ is a Bauer--Furuta basic class, then $K$ is  BF simple by \cref{thm:blowup}. 
(2) If $b_2^+=0$, then every Bauer--Furuta basic class is \textit{not} BF simple. 
This can be seen from  \cref{rem:b+0}.
\end{rem}

The lemma below easily follows from the blowup formula for Seiberg--Witten invariants \cite[Theorem 1.4]{FSimmersed}. 

\begin{lem}\label{Y:claim:SW simple=BF}
Let $K$ be a Seiberg--Witten basic class of $X$. 
Consider the following conditions:
\begin{itemize}
 \item [(i)] $d_X(K)=0$.
 \item [(ii)] $K$ is SW simple. 
 \item [(iii)] $K$ is BF simple. 
\end{itemize}
Then the following {\rm(1)} and {\rm (2)} hold.
\begin{enumerate}
\item  If $b_2^+>1$, then the conditions {\rm (i)} and {\rm (ii)} are equivalent.
\item  If $b_2^+-b_1>1$, 
then $K$ is a Bauer--Furuta basic class, 
and {\rm (i), (ii)} and {\rm (iii)} are equivalent. 
\end{enumerate} 
\end{lem}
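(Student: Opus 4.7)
The plan is to derive everything from the virtual dimension identity
\[
d_{X\#\overline{\CP}^2}(K+(2r+1)E) = d_X(K) - r(r+1)
\]
combined with the Fintushel--Stern blowup formula for Seiberg--Witten invariants. First I would prove~(1). For (i)~$\Rightarrow$~(ii), when $d_X(K) = 0$ and $l = 2r+1 \ne \pm 1$ (so $r \ne 0,-1$), the identity gives $d_{X\#\overline{\CP}^2}(K + lE) = -r(r+1) < 0$; the Seiberg--Witten moduli space is then empty and $\mathrm{SW}_{X\#\overline{\CP}^2}(K + lE) = 0$, so $K$ is SW simple. For (ii)~$\Rightarrow$~(i), suppose for contradiction that $d_X(K) > 0$. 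Since $K$ is a Seiberg--Witten basic class, $d_X(K)$ is a nonnegative even integer (odd virtual dimensions force vanishing of the Seiberg--Witten invariant by parity), so in fact $d_X(K) \ge 2$. The blowup formula \cite[Theorem 1.4]{FSimmersed} then produces some odd integer $l \ne \pm 1$ (specifically $l = \pm 3$) for which $K + lE$ is a Seiberg--Witten basic class of $X \# \overline{\CP}^2$ with $\mathrm{SW}_{X\#\overline{\CP}^2}(K + lE)$ a nonzero integer multiple of $\mathrm{SW}_X(K)$. This contradicts SW simplicity of~$K$.

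For~(2), the hypothesis $b_2^+ - b_1 > 1$ is preserved under blowup, so by \cref{thm:BF-SW} both $X$ and $X \# \overline{\CP}^2$ carry a homomorphism from their equivariant stable cohomotopy groups to $\Z$ sending the Bauer--Furuta invariant to the Seiberg--Witten invariant. In particular, every Seiberg--Witten basic class is automatically a Bauer--Furuta basic class; hence $K$ itself is Bauer--Furuta basic, and the same implication applied in $X \# \overline{\CP}^2$ yields (iii)~$\Rightarrow$~(ii). For (i)~$\Rightarrow$~(iii), \cref{prop:BFdim-nonneg} forces any Bauer--Furuta basic class of a manifold with $b_2^+ \ge 1$ to have nonnegative virtual dimension; when $d_X(K) = 0$ and $l \ne \pm 1$, the virtual dimension of $K + lE$ in $X \# \overline{\CP}^2$ is $-r(r+1) < 0$, so $K + lE$ is not a Bauer--Furuta basic class. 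Combined with~(1), this completes the cycle (i)~$\Rightarrow$~(iii)~$\Rightarrow$~(ii)~$\Rightarrow$~(i).

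The main obstacle is the concrete nonvanishing input at step (ii)~$\Rightarrow$~(i): when $d_X(K) \ge 2$ one must identify some $r \ne 0, -1$ for which the universal integer coefficient relating $\mathrm{SW}_{X\#\overline{\CP}^2}(K + (2r+1)E)$ to $\mathrm{SW}_X(K)$ is nonzero. I would take this as a black box from \cite[Theorem 1.4]{FSimmersed}; the remaining arguments are routine dimension counting together with invocations of \cref{thm:BF-SW} and \cref{prop:BFdim-nonneg}.
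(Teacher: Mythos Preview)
Your proposal is correct and follows essentially the same route as the paper. The paper is more terse: it simply cites Fintushel--Stern \cite[Theorem 1.4]{FSimmersed} as a black box for the equivalence (i)$\Leftrightarrow$(ii), then uses the nonnegativity of the virtual dimension for Bauer--Furuta basic classes (your \cref{prop:BFdim-nonneg}) to get (i)$\Rightarrow$(iii), and the implication ``SW basic $\Rightarrow$ BF basic'' in the blowup to get (iii)$\Rightarrow$(ii). You spell out the dimension formula and the parity/sign of $r(r+1)$ explicitly, but the logical structure and the cited inputs are the same.
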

\begin{proof}By a result of Bauer and Furuta  \cite[Proposition 3.3]{BF}, if $b_2^+-b_1>1$, then every Seiberg--Witten basic class is Bauer--Furuta basic class, and hence $K$ is a Bauer--Furuta basic class. 
Also, a result of Fintushel and Stern \cite[Theorem 1.4]{FSimmersed} shows that the condition (i) is equivalent to (ii), if $b_2^+>1$. 
The claim (i) $\Rightarrow $ (iii) easily follows from the fact every Bauer--Furuta basic class $L$ of a 4-manifold $Z$ with positive $b_2^+$ satisfies $d_Z(L)\geq 0$. The claim (iii) $\Rightarrow $ (ii) follows from the fact that every Seiberg--Witten basic class is Bauer--Furuta basic class. 
\end{proof}

\begin{cor}\label{Y:claim:simple=blow-up}
A $4$-manifold $X$ is of Seiberg--Witten simple type if and only if $X$ is of SW blowup simple type. 
\end{cor}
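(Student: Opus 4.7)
The plan is to read the corollary as a quantification of \cref{Y:claim:SW simple=BF}(1) over every Seiberg--Witten basic class. Since both notions appearing in the corollary are defined in terms of SW basic classes (see \cref{Y:def:SW simple type} and \cref{Y:def:blowup simple type}), we are implicitly in the regime $b_2^+(X)\geq 2$, which is exactly the hypothesis needed to invoke \cref{Y:claim:SW simple=BF}(1).

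First I would unfold the definitions. Being of Seiberg--Witten simple type amounts to: every SW basic class $K$ of $X$ satisfies $d_X(K)=0$. Being of SW blowup simple type amounts to: every SW basic class $K$ of $X$ is SW simple, i.e.\ $K+lE$ fails to be an SW basic class of $X\#\overline{\CP}^2$ for all $l\neq\pm 1$. The content of \cref{Y:claim:SW simple=BF}(1) is precisely the equivalence, for a fixed SW basic class $K$, of the two conditions (i) $d_X(K)=0$ and (ii) $K$ is SW simple, under the assumption $b_2^+>1$.

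The proof then proceeds in two implications. For the forward direction, assume $X$ is of Seiberg--Witten simple type and let $K$ be any SW basic class; then $d_X(K)=0$, so by \cref{Y:claim:SW simple=BF}(1) the class $K$ is SW simple, and since $K$ was arbitrary, $X$ is of SW blowup simple type. For the reverse direction, assume $X$ is of SW blowup simple type and let $K$ be any SW basic class; then $K$ is SW simple, and again \cref{Y:claim:SW simple=BF}(1) yields $d_X(K)=0$, so $X$ is of Seiberg--Witten simple type.

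There is essentially no obstacle: the corollary is a pointwise reformulation of \cref{Y:claim:SW simple=BF}(1), whose nontrivial input is the Fintushel--Stern blowup formula \cite[Theorem 1.4]{FSimmersed} used inside that lemma. The only point that deserves explicit mention is that the hypothesis $b_2^+\geq 2$ is automatic from the definitions involved, so no separate case analysis is required.
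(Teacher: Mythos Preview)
Your proposal is correct and matches the paper's approach: the corollary is stated without proof precisely because it is the immediate quantification of \cref{Y:claim:SW simple=BF}(1) over all Seiberg--Witten basic classes, and you have spelled this out accurately, including the observation that the standing hypothesis $b_2^+\geq 2$ from \cref{Y:def:SW simple type} makes part (1) of the lemma applicable.
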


Due to this corollary, BF blowup simple type gives another natural definition of simple type for Bauer--Furuta invariants. We will show that there are many examples of 4-manifolds having BF blowup simple type. Symplectic 4-manifolds are basic examples. 
\begin{prop}[cf.\ Bauer~\cite{B_survey}]\label{Y:claim:symplectic}
Every closed oriented symplectic $4$-manifold with {$b_2^+-b_1>1$} is of BF blowup simple type. 
\end{prop}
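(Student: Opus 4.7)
The plan is to deduce the claim from the $0$-dimensional BF homogeneous type of symplectic $4$-manifolds (\cref{prop:symBFhom}) together with the dimension formula under blowup.

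I would first observe that the hypothesis $b_2^+-b_1>1$ forces $b_2^+\geq 2$, so \cref{prop:symBFhom} applies and $X$ is of $0$-dimensional BF homogeneous type. Moreover $X\#\overline{\CP}^2$ is again a closed symplectic $4$-manifold (as the symplectic blowup of $X$ at a point) with the same $b_2^+$ and $b_1$ as $X$, so \cref{prop:symBFhom} likewise shows that $X\#\overline{\CP}^2$ is of $0$-dimensional BF homogeneous type.

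Now let $K$ be any Bauer--Furuta basic class of $X$. The $0$-dimensional BF homogeneous type of $X$ gives $d_X(K)=0$. Using $\chi(X\#\overline{\CP}^2)=\chi(X)+1$, $\sigma(X\#\overline{\CP}^2)=\sigma(X)-1$, and $E\cdot E=-1$, direct substitution into \eqref{eq:SWdim} yields
\[
d_{X\#\overline{\CP}^2}(K+lE)=d_X(K)+\frac{1-l^2}{4}=\frac{1-l^2}{4}.
\]
The first Chern class of a \Spinc structure must be characteristic, so the coefficient of $E$ has to be odd; thus for even $l$ there is no \Spinc structure with Chern class $K+lE$ and nothing to check. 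For odd $l\neq\pm 1$ the quantity above is non-zero, and the $0$-dimensional BF homogeneous type of $X\#\overline{\CP}^2$ forces $\BF(X\#\overline{\CP}^2,\fraks_{K+lE})=0$. In either case $K+lE$ is not a Bauer--Furuta basic class of $X\#\overline{\CP}^2$ whenever $l\neq\pm 1$, so $K$ is BF simple. Since $K$ was arbitrary, $X$ is of BF blowup simple type.

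I do not anticipate a serious obstacle: the proof is dimension bookkeeping on top of \cref{prop:symBFhom}, whose content rests on Taubes' theorem and the Seiberg--Witten simple type of symplectic $4$-manifolds. The only subtle point is the parity constraint on $l$, which is disposed of by the characteristic property of first Chern classes of \Spinc structures.
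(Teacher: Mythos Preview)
Your argument is correct. It differs from the paper's in packaging rather than in substance: the paper routes through the Seiberg--Witten side, using \cref{thm:BF-SW} (which is where the hypothesis $b_2^+-b_1>1$ enters) to see that the BF basic class $K$ is also a SW basic class, and then invokes \cref{Y:claim:SW simple=BF} to pass from $d_X(K)=0$ to BF simplicity. You instead stay on the BF side and apply \cref{prop:symBFhom} to the symplectic blowup $X\#\overline{\CP}^2$. Your route is self-contained and in fact only needs $b_2^+\geq 2$, but the appeal to the symplectic structure on the blowup is an unnecessary detour: once $d_X(K)=0$, your dimension formula already gives $d_{X\#\overline{\CP}^2}(K+lE)<0$ for odd $l\neq\pm 1$, and negativity alone (\cref{prop:BFdim-nonneg}(2)) forces vanishing of the BF invariant. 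That simpler observation is precisely what underlies the implication (i)$\Rightarrow$(iii) in \cref{Y:claim:SW simple=BF}, so the two proofs collapse to the same core step.
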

\begin{proof}
By \cref{prop:symBFhom}, a closed oriented symplectic $4$-manifold with $b_2^+>1$ has the $0$-dimensional BF homogeneous type, and therefore every Bauer--Furuta basic class $K$ satisfies that $d_X(K)=0$. 
If $b_2^+-b_1>1$, then $K$ is also a Seiberg--Witten basic class by \cref{thm:BF-SW}.
Then the claim  follows from \cref{Y:claim:SW simple=BF}. 
\end{proof}

We observe that blowup operation preserves BF and SW simplicities. 

\begin{lem}\label{Y:claim:blow-up}Suppose that a Bauer--Furuta $($resp.\ Seiberg--Witten$)$ basic class $K$ of $X$ is $BF$ $($resp.\ SW$)$ simple. Then both $K+E$ and $K-E$ are BF simple in $X\#\overline{\CP}^2$. 
\end{lem}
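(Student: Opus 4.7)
The plan is to apply \cref{thm:blowup} after swapping the two $\overline{\CP}^2$-summands. Let $\fraks_K$ be a \Spinc structure on $X$ with $c_1(\fraks_K)=K$ and $\BF(X,\fraks_K)\neq 0$. By part (2) of \cref{thm:blowup} (with $r=0$ and $r=-1$), both $K+E$ and $K-E$ are Bauer--Furuta basic classes of $X\#\overline{\CP}^2$. Hence it only remains to verify the BF simplicity of these two classes in $X\#\overline{\CP}^2$.

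I treat the case of $K+E$; the argument for $K-E$ is entirely parallel, with $r=-1$ in place of $r=0$. Suppose, for contradiction, that $(K+E)+lE'$ is a Bauer--Furuta basic class of $Y:=X\#\overline{\CP}^2\#\overline{\CP}^2$ for some odd integer $l$ with $l\neq\pm 1$, where $E$ and $E'$ denote the exceptional classes of the first and second $\overline{\CP}^2$-summand, respectively. By the commutativity of connected sum, reinterpret $Y$ as a single blowup of $Z:=X\#\overline{\CP}^2$, in which the $\overline{\CP}^2$-summand already present in $Z$ carries the class $E'$ and the $\overline{\CP}^2$-summand added to produce $Y$ carries the class $E$. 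Under this reinterpretation, the coefficient of $E$ in $K+E+lE'$ equals $1=2\cdot 0+1$, so the corresponding \Spinc structure on $Y$ is of the form $\fraks_0$ in the notation of \cref{thm:blowup}, applied to the base $Z$ with base \Spinc structure $\fraks'$ whose first Chern class is $K+lE'$.

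Part (2) of \cref{thm:blowup} then gives
\[
\BF(Z,\fraks')=\BF(Y,\fraks_0)\neq 0,
\]
so $K+lE'$ is a Bauer--Furuta basic class of $Z\cong X\#\overline{\CP}^2$. Since $l\neq\pm 1$, this contradicts the BF simplicity of $K$ in $X$, completing the proof that $K+E$ is BF simple.

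For the Seiberg--Witten version, the same argument carries over using the classical Seiberg--Witten blowup formula \cite[Theorem 1.4]{FSimmersed} in place of \cref{thm:blowup}: the $r=0$ and $r=-1$ blowups preserve the SW invariant, yielding that $K\pm E$ are SW basic in $X\#\overline{\CP}^2$, and the same swap of the two $\overline{\CP}^2$-summands delivers a contradiction to SW simplicity of $K$ whenever $(K\pm E)+lE'$ is SW basic for $l\neq\pm 1$. I expect the only subtle point to be tracking that the \Spinc structures align correctly after exchanging the two $\overline{\CP}^2$-summands, but this is routine since each relevant \Spinc structure is determined by its first Chern class on the simply-connected factors being blown up.
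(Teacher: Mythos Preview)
Your proof is correct and follows essentially the same approach as the paper: both arguments swap the roles of the two $\overline{\CP}^2$-summands and then apply \cref{thm:blowup} to conclude that $K+lE$ (or $K+lE'$) is a Bauer--Furuta basic class of $X\#\overline{\CP}^2$, contradicting BF simplicity of $K$. The paper phrases the swap as an explicit orientation-preserving self-diffeomorphism of $X\#\overline{\CP}^2\#\overline{\CP}^2$ taking $K+E+lE'$ to $K+lE+E'$, whereas you phrase it as a reinterpretation of the connected-sum decomposition; these are equivalent. For the SW case the paper takes a shortcut via $d_X(K)=0$ and \cref{Y:claim:SW simple=BF}, but your parallel argument via the SW blowup formula also works.
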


\begin{proof}The SW case can be easily proved using the fact that $d_X(K)=0$, so we prove the BF case.  Let $E'$ denotes the generator of the second cohomology group of the last connected summand $\overline{\CP}^2$ of $X\#\overline{\CP}^2\#\overline{\CP}^2$. Suppose that $K+ E+lE'$ is a Bauer--Furuta basic class of $X\#\overline{\CP}^2\#\overline{\CP}^2$. 
As easily seen, there exists an orientation preserving self-diffeomorphism of $X\#\overline{\CP}^2\#\overline{\CP}^2$ whose induced isomorphism maps $K+ E+lE'$ to $K+lE+E'$. Therefore, $K+lE+E'$ is a Bauer--Furuta basic class of $X\#\overline{\CP}^2\#\overline{\CP}^2$. Theorem~\ref{thm:blowup} thus shows that $K+ lE$ is a Bauer--Furuta basic class of $X\#\overline{\CP}^2$. Since $K$ is BF simple, we see $l=\pm 1$, showing that $K+E$ is BF simple. The same argument shows the claim for $K-E$.  
\end{proof}

In the rest of this section, let $X_1$ and $X_2$ be closed connected oriented smooth 4-manifolds. 
We observe the lemma below. 
\begin{lem}\label{Y:claim:non BF simple}
Let $K_1$ and $K_2$ be second cohomology classes of $X_1$ and $X_2$, respectively. If $K_1+K_2+lE$ is a Bauer--Furuta basic class of $X_1\#X_2\#\overline{\CP}^2$ for some integer $l\neq \pm 1$, then each $K_i$ is a Bauer--Furuta basic class of $X_i$ which is not BF simple. 
\end{lem}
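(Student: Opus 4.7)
The plan is to apply Corollary \ref{cor:basic} twice, using two different associativity groupings of the connected sum $X_1\#X_2\#\overline{\CP}^2$. Since $H^2(X_1\#X_2\#\overline{\CP}^2;\Z)\cong H^2(X_1)\oplus H^2(X_2)\oplus H^2(\overline{\CP}^2)$, the class $K_1+K_2+lE$ can be split off along either connected-sum factorization; the hypothesis $l\neq\pm1$ will then translate directly into the failure of BF simplicity for each $K_i$.

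First I would view the manifold as $(X_1\#\overline{\CP}^2)\#X_2$. Under this decomposition, the class $K_1+K_2+lE$ splits as $(K_1+lE)+K_2$ with $K_1+lE\in H^2(X_1\#\overline{\CP}^2)$ and $K_2\in H^2(X_2)$. Since $K_1+K_2+lE\in\calB(X_1\#X_2\#\overline{\CP}^2)$ by hypothesis, \cref{cor:basic} gives
\[
K_1+lE\in\calB(X_1\#\overline{\CP}^2)\quad\text{and}\quad K_2\in\calB(X_2).
\]
Next I would regroup as $X_1\#(X_2\#\overline{\CP}^2)$ and apply \cref{cor:basic} again to the decomposition $K_1+(K_2+lE)$, obtaining
\[
K_1\in\calB(X_1)\quad\text{and}\quad K_2+lE\in\calB(X_2\#\overline{\CP}^2).
\]

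Combining both applications, each $K_i$ is a Bauer--Furuta basic class of $X_i$, and at the same time $K_i+lE$ is a Bauer--Furuta basic class of $X_i\#\overline{\CP}^2$ with $l\neq\pm1$. By \cref{Y:def:blowup simple type}, this is exactly the statement that $K_i$ is not BF simple, which completes the proof.

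There is no real obstacle: the argument is a direct bookkeeping exercise once one realizes that the asymmetry in placing the exceptional factor $\overline{\CP}^2$ on either side of the connected sum allows \cref{cor:basic} to be applied in two complementary ways. The only mild subtlety to check is that the splitting of first Chern classes along a connected sum is compatible with the direct sum decomposition of $H^2$, which is standard and already implicit in the statement of \cref{cor:basic}.
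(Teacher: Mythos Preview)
Your proof is correct and essentially matches the paper's argument. The paper invokes \cref{cor:nonvanishing} (the spin$^c$-structure version) while you cite its restatement \cref{cor:basic} (the basic-class version), but the logic is identical: regroup the connected sum two ways, split off the $\overline{\CP}^2$ factor on either side, and read off that $K_i\in\calB(X_i)$ and $K_i+lE\in\calB(X_i\#\overline{\CP}^2)$ for each $i$.
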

\begin{proof}Since $X_1\#X_2\#\overline{\CP}^2$ is diffeomorphic to $(X_1\#\overline{\CP}^2)\#X_2$,  
	 \cref{cor:nonvanishing} shows that $K_1+lE$ and $K_1$ are Bauer--Furuta basic classes of respectively $X_1\#\overline{\CP}^2$ and $X_1$. Hence the assumption $l\neq \pm 1$ shows that $K_1$ is not BF simple in $X_1$. Also, the same argument shows the claim for $K_2$. 
\end{proof}

The proposition below is straightforward from this lemma, and this proposition and Proposition \ref{Y:claim:symplectic} immediately shows the corollary below.  
\begin{prop}\label{Y:claim:connected sum}
If at least one of $X_1$ and $X_2$ is of BF blowup simple type, then the connected sum $X_1\#X_2$ is of BF blowup simple type. 
\end{prop}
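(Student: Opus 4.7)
The plan is to reduce this proposition to a direct application of the two preceding lemmas in this section, namely \cref{cor:basic} (basic classes of a connected sum decompose as sums of basic classes of each summand) and \cref{Y:claim:non BF simple} (a non-simple blow-up basic class of $X_1\#X_2\#\overline{\CP}^2$ forces non-simplicity in each factor). Without loss of generality, I would assume $X_1$ is of BF blowup simple type.

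First, I would fix an arbitrary Bauer--Furuta basic class $K$ of $X_1\#X_2$ and argue by contradiction: suppose $K + lE$ is a Bauer--Furuta basic class of $(X_1\#X_2)\#\overline{\CP}^2$ for some integer $l\neq\pm 1$. Next, I would apply \cref{cor:basic} to $X_1\#X_2$ to decompose $K = K_1 + K_2$ with $K_i\in\calB(X_i)$ for $i=1,2$. Rewriting the blown-up class as $K_1 + K_2 + lE$, \cref{Y:claim:non BF simple} then forces each $K_i$ to be a Bauer--Furuta basic class of $X_i$ that is \emph{not} BF simple.

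The contradiction is immediate: since $X_1$ is of BF blowup simple type, every Bauer--Furuta basic class of $X_1$, in particular $K_1$, must be BF simple. Hence no such $l$ exists, so $K$ is BF simple, and since $K$ was arbitrary, $X_1\#X_2$ is of BF blowup simple type.

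There is essentially no substantial obstacle in this argument; the proposition is a straightforward corollary of the two lemmas cited. The only small point to verify is the compatibility of the Poincar\'e dual $E$ of the exceptional sphere under the identification $(X_1\#X_2)\#\overline{\CP}^2 \cong X_1\#(X_2\#\overline{\CP}^2)$ used implicitly when invoking \cref{Y:claim:non BF simple}, but this is built into the construction of the connected sum.
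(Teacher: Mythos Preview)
Your argument is correct and matches the paper's approach, which simply declares the proposition ``straightforward from'' \cref{Y:claim:non BF simple}. One small redundancy: invoking \cref{cor:basic} is not needed, since any $K\in H^2(X_1\#X_2;\Z)$ already decomposes as $K_1+K_2$ under $H^2(X_1\#X_2)\cong H^2(X_1)\oplus H^2(X_2)$, and \cref{Y:claim:non BF simple} then supplies both the basic-class property and the non-simplicity of each $K_i$.
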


\begin{cor}\label{Y:claim:sum symplectic} If  $X_1$  admits a symplectic structure and satisfies that $b_2^+(X_1)-b_1(X_1)>1$, then the connected sum $X_1\#X_2$ is of BF blowup simple type. 
\end{cor}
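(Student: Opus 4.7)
The plan is to deduce this corollary directly from the two immediately preceding results, \cref{Y:claim:symplectic} and \cref{Y:claim:connected sum}, which together yield the statement in essentially one line. There is no real obstacle here; the work has already been done in those propositions.

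First I would invoke \cref{Y:claim:symplectic}: since $X_1$ is a closed oriented symplectic $4$-manifold with $b_2^+(X_1) - b_1(X_1) > 1$, it is of BF blowup simple type. This step relies on \cref{prop:symBFhom} (giving $0$-dimensional BF homogeneous type for symplectic manifolds with $b_2^+ \geq 2$, which is guaranteed by the inequality $b_2^+(X_1) - b_1(X_1) > 1 \geq -b_1(X_1) + 1$) and on \cref{Y:claim:SW simple=BF}(2) to identify Bauer--Furuta basic classes with SW basic classes of dimension zero, which are automatically BF simple.

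Next I would apply \cref{Y:claim:connected sum}, which says that if at least one summand of a connected sum is of BF blowup simple type, then so is the connected sum. Taking that summand to be $X_1$ and the other to be the arbitrary closed oriented smooth $4$-manifold $X_2$, we conclude that $X_1 \# X_2$ is of BF blowup simple type. Since no hypothesis on $X_2$ is used beyond the standing assumptions, the corollary follows. The only thing worth noting is that the argument places no restriction on $b_2^+(X_2)$ or on whether $X_2$ admits any particular geometric structure — the symplectic hypothesis on $X_1$ alone is enough to propagate BF blowup simplicity across the connected sum via \cref{Y:claim:non BF simple}, which is the engine behind \cref{Y:claim:connected sum}.
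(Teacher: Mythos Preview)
Your proposal is correct and follows exactly the paper's own argument: apply \cref{Y:claim:symplectic} to see that $X_1$ is of BF blowup simple type, then invoke \cref{Y:claim:connected sum} to conclude the same for $X_1\#X_2$. The additional commentary you give about the underlying mechanisms (\cref{prop:symBFhom}, \cref{Y:claim:SW simple=BF}, \cref{Y:claim:non BF simple}) is accurate but already absorbed into those cited results, so the one-line deduction suffices.
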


This corollary and the connected sum formulae of Bauer~\cite{BF2} and Ishida and Sasahira \cite{IS} for Bauer--Furuta invariant produce many examples of smooth 4-manifolds of BF blowup simple type having the vanishing Seiberg--Witten invariant and yet a non-vanishing Bauer--Furuta invariant.

Now we prove \cref{Y:claim:simple:embedded,Y:claim:simple:immersed}.

\begin{proof}[Proof of \cref{Y:claim:simple:embedded}]
Let $\alpha$ be the second homology class of $X$ represented by the given surface.
Since $\alpha\cdot\alpha=2g-2>0$, $\alpha$ is  a non-torsion class and $b_2^+(X)\geq 1$.
Let $K$ be a Bauer--Furuta basic class of $X$. Suppose that $K+lE$ is a Bauer--Furuta basic class of $X\#\overline{\CP}^2$ for some integer $l{\neq}\pm 1$. Since $K+lE$ is characteristic, we see that $l$ is odd, showing $|l|>1$. 
{We may assume $\langle K,\, \alpha \rangle \geq 0$ if necessary by reversing the orientation of the surface. Since $\alpha-e$ is represented by a smoothly embedded surface of genus $g$ in $X\#\overline{\CP}^2$ with self-intersection number $2g-3>0$,  \cref{cor:adj} implies that
\begin{equation*}
\langle K+lE,\, \alpha-e \rangle  \,+\, (\alpha-e)\cdot(\alpha-e) \, \leq \, 2g-2.
\end{equation*}
The left hand side is equal to $\langle K,\, \alpha \rangle+l+2g-3$. In the case where $l>1$, this value is larger than $2g-2$, giving a contradiction to the above inequality. In the case where $l<-1$, by applying the adjunction inequality to $\alpha+e$, we obtain a contradiction. Therefore, the claim follows.} 
\end{proof}

\begin{proof}[Proof of \cref{Y:claim:simple:immersed}]
In the case where $p>1$, the claim follows from \cref{Y:claim:simple:embedded}, { since $X$ admits a smoothly embedded closed oriented surface of genus $p$ with self-intersection number $2p-2$.  
 We thus assume $p=1$. Let $K$ be a Bauer--Furuta basic class of $X$. We will show that $K$ is BF simple. To the contrary, suppose that $K+lE_1$ is a Bauer--Furuta basic class of $X\#\overline{\CP}^2$ for some integer $l$ with $l\neq \pm 1$. Then, we easily see $|l|>1$. Let $\alpha$ be a second homology class of $X$ represented by the immersed 2-sphere. If necessary by resolving the negative double points using blowing ups, we may assume that $\alpha$ is represented by an immersed 2-sphere with a single positive double point in $X\#k\overline{\CP}^2$ for some positive integer $k$.  
We may furthermore assume $\langle K,\alpha \rangle \geq 0$. Due to Theorem~\ref{thm:negdef}, the class $K+lE_1+E_2+\dots+E_k$ is a Bauer--Furuta basic class of $X\#k\overline{\CP}^2$.   
By \cref{Y:immersed:na} below, for any positive integer $n$, the second homology class $n\alpha-e_1$ of $X\#k\overline{\CP}^2$ is represented by an immersed 2-sphere with one positive double point. 
In the case where $l>1$, the inequality  
\begin{equation*}
\langle K+lE_1+E_2+\dots+E_k,\, n\alpha-e_1 \rangle  \,+\, (n\alpha-e_1)\cdot(n\alpha-e_1) \, > \, 2\cdot 1-2
\end{equation*}
holds, since the left hand side is equal to $n\langle  K, \alpha \rangle+ l-1$. 
\cref{thm:immersedadjunction} thus shows that $K+lE_1+E_2+\dots+E_k+2PD(n\alpha-e_1)$ is Bauer--Furuta basic class of $X\#k\overline{\CP}^2$ for any positive integer $n$, where $PD$ denotes the Poincar\'{e} dual. This shows that $X\#k\overline{\CP}^2$ has infinitely many Bauer--Furuta basic classes whose virtual dimensions are larger than that of $K+lE_1+E_2+\dots+E_k$, giving a contradiction to the finiteness of such classes (\cite[Lemma~2.4]{Ko04}). In the case where $l<-1$,  by applying the same argument to $n\alpha+e_1$, we obtain a contradiction.}
\end{proof}
\begin{proof}[Proof of \cref{thm:log}]
As mentioned in the paragraph before \cref{thm:log}, we can find the submanifold $C_p$ in $X\#(p-1)\overline{\CP}^2$ and $X_{(p)}$ is diffeomorphic to the rational blowdown of this $C_p$.
Since the fishtail class $\alpha$ satisfies the condition of \cref{Y:claim:simple:immersed}, the Bauer--Furuta basic classes of $X\#(p-1)\overline{\CP}^2$ have the form $L_J:=L+\sum J_iE_i$ where $L\in\calB(X)$ and $J_i=\pm1$ ($i=1,\cdots, p-1$).
Let $X_0$ be the complement of the interior of $C_p$.
Then $L+\left(\sum J_i\right) f\in H^2(X_{(p)};\Z)$  is a unique extension of $L_J|X_0$.
By \cref{thm:rbd}, the Bauer--Furuta basic classes of $X_{(p)}$ are of the form $L+\left(\sum J_i\right) f$  where $L\in\calB(X)$ and $J_i=\pm1$ ($i=1,\cdots, p-1$).
\end{proof}

\begin{lem}\label{Y:immersed:na}Suppose that a second homology class $\alpha$ of a connected oriented smooth 4-manifold $Z$   with $\alpha\cdot \alpha=0$ is represented by an immersed 2-sphere with exactly one positive double point. Then for any integer $n$, the class $n\alpha$ is represented by an immersed 2-sphere with exactly one positive double point. 
\end{lem}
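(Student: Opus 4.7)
The plan is a direct geometric construction of the required immersed $2$-sphere. For $n = \pm 1$ the claim is immediate: $n = 1$ is the hypothesis, while for $n = -1$ one reverses the orientation of the given immersion, which reverses both local branches at the double point, so the sign of the orientation comparison (a product of two $-1$'s) is unchanged and the double point remains positive. This same observation handles $n \mapsto -n$ in general. For $n = 0$, a local construction inside a $4$-ball $B \subset Z$ suffices: take two transversely meeting disks in $B$ that intersect in one positive double point, and cap off their positive Hopf-link boundary in $\partial B \cong S^3$ by an embedded annulus in $B$, whose existence follows by taking two slice disks bounding the Hopf-link components and smoothing their single transverse intersection into an annular neck.

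For $|n| \geq 2$, reducing to $n \geq 2$ by orientation reversal, I would first smooth the positive double point of the given immersed sphere $S$ to obtain an embedded torus $T \subset Z$ representing $\alpha$ with trivial normal bundle (since $T \cdot T = \alpha \cdot \alpha = 0$). In a tubular neighborhood $T \times D^2$ of $T$, take $n$ disjoint parallel tori and connect them with $n - 1$ short tubes to obtain a single embedded genus-$n$ surface $\Sigma$ representing $n\alpha$; if $n$ is even, add one extra trivial tube inside a ball in the neighborhood so that the resulting genus $g$ of $\Sigma$ is odd. Then perform $g$ reverse-smoothings on $\Sigma$, each of which reduces the genus by one and introduces a transverse double point whose sign may be chosen. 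Choosing $(g + 1)/2$ of them positive and $(g - 1)/2$ negative, the $(g - 1)/2$ opposite pairs can be cancelled via Whitney moves, leaving a single positive double point.

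The main obstacle is the existence of the Whitney disks needed for the cancellations, since $Z$ need not be simply connected. This is resolved by performing the entire construction inside a regular neighborhood of $S$, which under the hypotheses ($\alpha \cdot \alpha = 0$ and a single positive double point on $S$) is diffeomorphic to a fishtail neighborhood---the trace of a $(-1)$-framed $2$-handle attached to $T^2 \times D^2$ along the vanishing cycle. In this explicit local model the Lefschetz-fibration structure (with its nodal fiber and vanishing cycle on the generic torus fiber) supplies concrete Whitney disks for each cancellable pair, allowing the sign bookkeeping and cancellations to be carried out cleanly and yielding the desired immersed sphere with exactly one positive double point.
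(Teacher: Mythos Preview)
Your approach via parallel tori, tubing, reverse-smoothing, and Whitney cancellation is geometrically appealing, but the final step has a genuine gap. The fishtail neighborhood is \emph{not} simply connected: it is built from $T^2\times D^2$ by attaching a single $2$-handle along a vanishing cycle, so its fundamental group is $\Z$, generated by the curve on the torus fiber dual to the vanishing cycle. Consequently the Whitney circles joining your $\pm$ pairs of double points need not be null-homotopic, and you give no argument that your particular reverse-smoothings produce pairs whose Whitney circles are trivial in $\pi_1$. Even granting null-homotopic Whitney circles, in dimension~$4$ one still needs \emph{embedded} Whitney disks disjoint from the immersed sphere in order to cancel without introducing new intersections; this is exactly where the Whitney trick fails in four dimensions, and the bare assertion that ``the Lefschetz-fibration structure supplies concrete Whitney disks'' does not address it. You would need to exhibit these disks explicitly, which amounts to doing the hard work the outline was meant to avoid.

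The paper takes a completely different route that sidesteps these issues. It works with the Kirby diagram of the fishtail neighborhood (one $1$-handle and one $0$-framed $2$-handle) and performs an explicit sequence of handle slides and creation/cancellation of handle pairs to produce, for each $n>0$, an alternative handle decomposition containing a $2$-handle whose homology class is $n\alpha$. It then observes diagrammatically that the attaching circle of this $2$-handle becomes an unknot in the boundary $S^1\times S^2$ of the $1$-handlebody after a single positive crossing change; hence it bounds an immersed disk with exactly one positive double point, and capping with the $2$-handle core gives the required immersed sphere. This Kirby-calculus argument is concrete and avoids any appeal to Whitney moves. If you wish to salvage your approach, you would need either to replace the Whitney step by an explicit local model (showing the $\pm$ pairs arise from finger moves that can be undone), or to adopt a diagrammatic argument as in the paper.
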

\begin{proof}Since the 4-ball contains an immersed 2-sphere with exactly one positive double point, the $n=0$ case holds. We may thus assume $n>0$. 
	 A thickened neighborhood for the immersed 2-sphere representing $\alpha$ is a self-plumbing of a 2-sphere. Therefore, the neighborhood is the fishtail neighborhood given by the handlebody diagram shown in the first diagram of Figure~\ref{fig:slides} (see \cite[p.\ 240]{KiMe}, \cite[p.\ 202]{GS}). 
	In this figure, homology classes denote the classes given by corresponding 2-handles,  and we change the handle decomposition of the fishtail neighborhood by handle moves similar to those in \cite[Figures 12--17]{Yasui_AGT}, without changing the diffeomorphism type. We create a canceling pair of 2- and 3-handles and slide the resulting 2-handle over the 2-handle representng $\alpha$, as shown in the second diagram. We then slide 2-handles as indicated in the third and the fourth diagrams. By repeating these handle slides and using an isotopy, we obtain the fifth diagram. By sliding the 2-handle $n\alpha$ over the $-1$-framed 2-handle twice and the deleting the obvious canceling pair of 1- and 2-handles, we obtain the last diagram. We discuss this subhandlebody of the fishtail neighborhood. As shown in Figure~\ref{fig:immersed}, the attaching circle of the 2-hanldle representing $n\alpha$ becomes an unknot in the boundary of the 1-handlebody $S^1\times D^3$ by a positive crossing change. We thus see that the attaching circle bounds an immersed disk in the collar neighborhood of the boundary of the 1-handlebody.  Therefore, the class $n\alpha$ is represented by an immersed 2-sphere in $Z$ with exactly one positive double point. 
\begin{figure}[ht!]
\begin{center}
\includegraphics[width=4.5in]{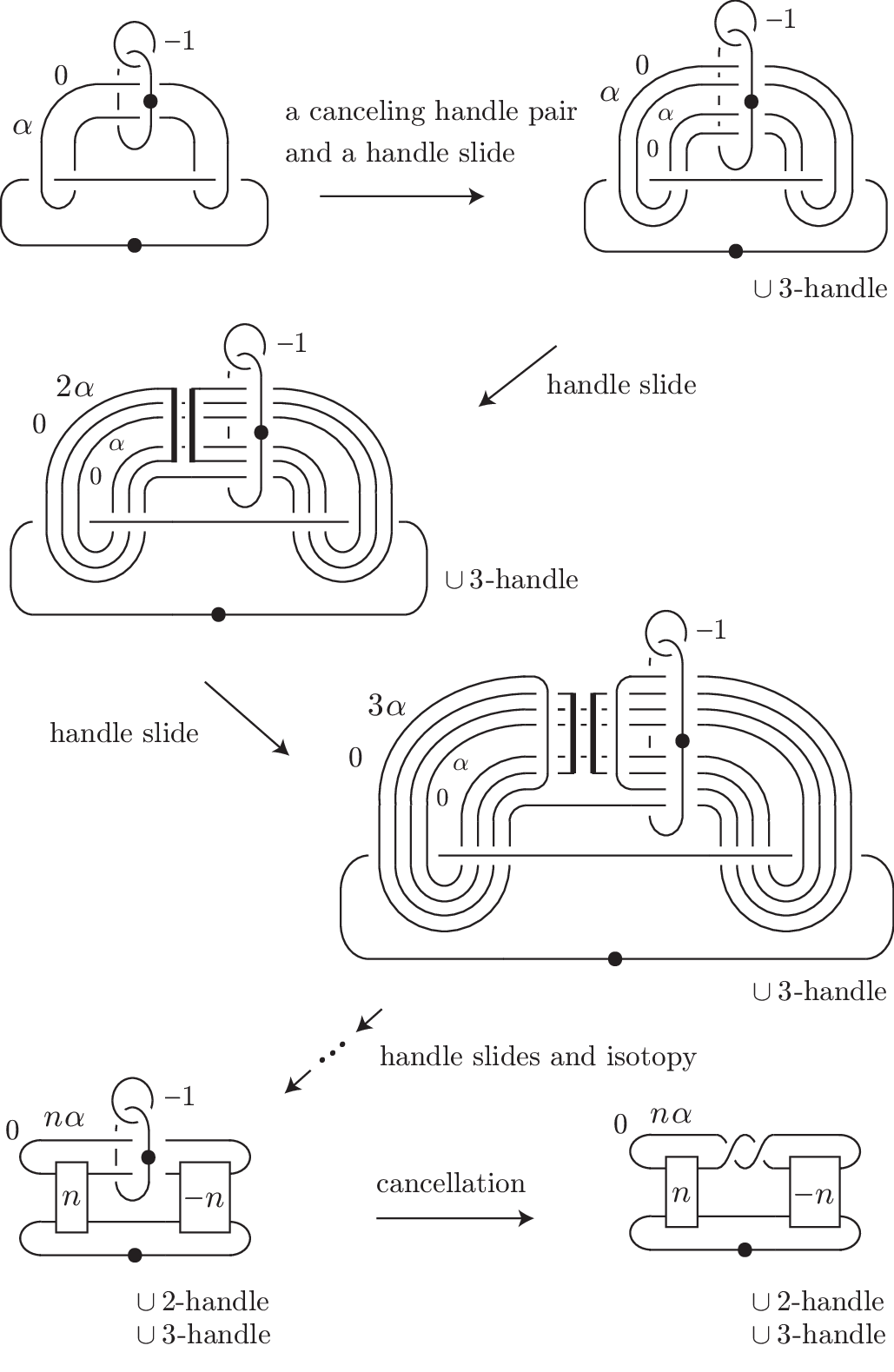}
\caption{Handle decompositions of a fishtail neighborhood}
\label{fig:slides}
\end{center}
\end{figure}
\begin{figure}[ht!]
\begin{center}
\includegraphics[width=3.9in]{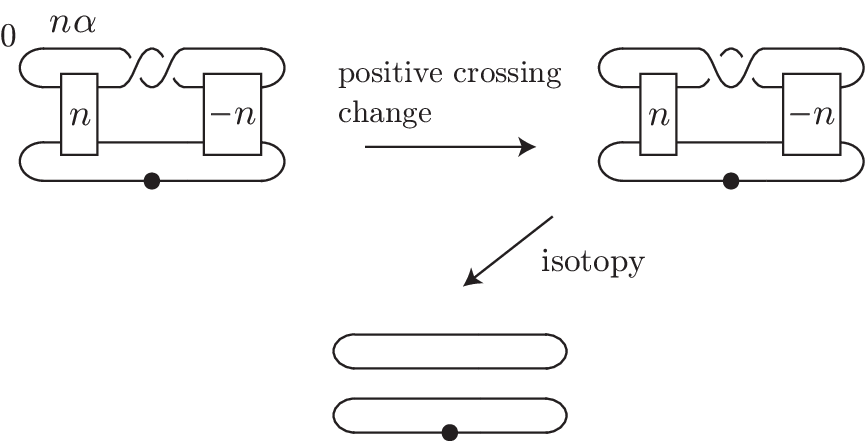}
\caption{Homotopy of the attaching circle}
\label{fig:immersed}
\end{center}
\end{figure}
\end{proof}
We remark that \cref{Y:claim:simple:embedded,Y:claim:simple:immersed} together with the connected sum formulae for Bauer--Furuta invariants produces many examples of smooth 4-manifolds of BF blowup simple type having the vanishig Seiberg--Witten invariant and yet a non-vanishing Bauer--Furuta invariant, which are not (at least trivially) produced from \cref{Y:claim:sum symplectic}. 

As an application of blowup simple type, we here give an alternative proof of the following theorem, which shows that all smooth structures are of mod 2 Seiberg--Witten simple type for a large class of topological 4-manifolds. This result was earlier obtained by  Kato, Nakamura and Yasui in \cite{KNY}. Although the essential idea is similar to the original proof, the proof simplifies the argument and especially does not use the adjunction inequality unlike the original proof. 

\begin{theo}[\cite{KNY}]\label{Y:claim:simple type conjecture}
Suppose that $X$ satisfies $b_2^+-b_1>1$ and $b_2^+-b_1\equiv 3\pmod{4}$, and let $\{\delta_1, \delta_2, \dots, \delta_k\}$ be a generating set of $H^1(X)$. If each cup product $\delta_i\cup \delta_j$ is either torsion or divisible by 2, then $X$ is of mod 2 Seiberg--Witten simple type. 
\end{theo}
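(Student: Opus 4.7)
The plan is to argue by contradiction, reducing to BF simplicity via \cref{Y:claim:SW simple=BF}(2). Suppose $K$ is a mod 2 Seiberg--Witten basic class of $X$ with $d_X(K) > 0$, so there is a \Spinc structure $\fraks$ with $c_1(\fraks) = K$ and $\SW(X, \fraks)$ odd. Since $b_2^+ - b_1 > 1$, Proposition~3.3 of \cite{BF} shows $K$ is a Bauer--Furuta basic class, and by \cref{Y:claim:SW simple=BF}(2) the condition $d_X(K) > 0$ is equivalent to $K$ being \emph{not} BF simple: there is an odd integer $l$ with $|l| > 1$ such that $K + lE$ is a Bauer--Furuta basic class of $X \# \overline{\CP}^2$. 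A parity computation using $b_2^+ - b_1 \equiv 3 \pmod 4$ together with the characteristic-class identity $c_1(\fraks)^2 \equiv \sigma(X) \pmod 8$ shows that $d_X(K)$ is necessarily even.

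To derive a contradiction from this, the two hypotheses of the theorem are combined to force a mod 2 vanishing of $\BF(X, \fraks)$. First, the cup product hypothesis on $\{\delta_i \cup \delta_j\}$ (each torsion or divisible by $2$) constrains the Hurewicz image of the class $\BF(X, \fraks) \in \pi^0_{S^1}(Pic(X); \lambda)$ in the ordinary cohomology of the relevant Thom space over $Pic(X)$: the contributions polynomial in $H^1(X)$, which arise via slant products of powers of Chern classes of the universal line bundle on $X \times Pic(X)$ against the fundamental class $[X]$, become either torsion or even in the appropriate degree. This effectively reduces the mod 2 BF analysis of $(X, \fraks)$ to an auxiliary situation with $b_1 = 0$. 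Second, the mod 4 hypothesis $b_2^+ - b_1 \equiv 3 \pmod 4$ places the reduced invariant in the stable cohomotopy group $\pi^{b_2^+ - b_1 - 1}(\CP^{k-1})$, whose structure (via Bauer's computation \cite[Lemma~3.5]{BF}; cf.\ \cref{cohomotopy}) makes every contribution at $d > 0$ a $2$-torsion class. Hence the mod 2 reduction of the image of $\BF(X, \fraks)$ in the integer-valued Seiberg--Witten invariant must vanish, contradicting $\SW(X, \fraks)$ being odd.

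The main obstacle is the first reduction: verifying that the cup product hypothesis actually eliminates the $H^1$-dependent contributions to the Hurewicz image of $\BF$ modulo $2$. This would be carried out by explicitly identifying the $H^1$-polynomial terms in the Hurewicz image as slant products and cup products involving the $\delta_i$'s, and showing they become torsion or divisible by $2$ under the hypothesis, so that only the "$b_1 = 0$ fiberwise" piece survives. This step plays the role that the adjunction inequality played in the original argument of \cite{KNY}, and constitutes the technical heart of the proof.
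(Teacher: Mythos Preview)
Your proposal has a genuine gap, and the overall strategy diverges from the paper's argument.

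The second paragraph is where things break down. You assert that the cup product hypothesis lets you ``reduce the mod 2 BF analysis of $(X,\fraks)$ to an auxiliary situation with $b_1=0$'', and that in this reduced setting the relevant cohomotopy group ``makes every contribution at $d>0$ a $2$-torsion class''. Neither step is carried out, and the second is simply false as stated: for example at $d=2$ the group $\pi^{2n-2}(\CP^n)$ is $\Z$ or $\Z\oplus\Z/2$ by \cref{k=2}, not a torsion group; the Hurewicz image (which \emph{is} the Seiberg--Witten invariant) can be any integer, or any even integer, depending on the parity of $n$. So even granting a reduction to $b_1=0$, you would still need a nontrivial parity analysis of the Hurewicz image degree by degree, and you have not explained why the reduction itself preserves that image modulo 2. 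You also never return to the fact you derived in the first paragraph (that $K+lE$ is BF basic for some $|l|>1$), so the logical skeleton you set up is left unused.

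The paper's proof is entirely different and avoids any direct cohomotopy or Hurewicz analysis. Starting from a mod 2 SW basic class $K$ with $d_X(K)=2n>0$, one first blows up $n$ times: by the blowup formula, $K+3E_1+\cdots+3E_n$ is a mod 2 SW basic class of $X\#n\overline{\CP}^2$ with $d=0$. Next take any $X'$ with $b_1(X')=0$, $b_2^+(X')\equiv 3\pmod 4$, and a mod 2 SW basic class $K'$ with $d_{X'}(K')=0$ (for instance $K3$); by \cref{Y:claim:SW simple=BF}, $K'$ is BF simple in $X'$. The crucial input is the Ishida--Sasahira connected-sum nonvanishing theorem \cite{IS}: this is precisely where the cup product hypothesis on the $\delta_i\cup\delta_j$ and the condition $b_2^+-b_1\equiv 3\pmod 4$ are consumed, yielding that $K+3E_1+\cdots+3E_n+K'$ is a Bauer--Furuta basic class of $X\#n\overline{\CP}^2\#X'$. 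Since the coefficient $3\neq\pm 1$ appears on $E_1$, \cref{Y:claim:non BF simple} forces $K'$ to be \emph{not} BF simple in $X'$, a contradiction. In short, the hypotheses of the theorem are exactly the hypotheses of \cite{IS}; your proposal never invokes that result, and without it there is no mechanism linking the cup product condition to any vanishing.
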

\begin{proof}
To the contrary, suppose that $X$ is not of mod 2 Seiberg--Witten simple type.  Then, we see that $X$ has a mod 2 Seiberg--Witten basic class $K$ with $d_X(K)=2n$ for some positive integer $n$. By the blow up formula (\cite[Theorem 1.4]{FSimmersed}), $K+ 3E_1+\dots+3E_n$ is a mod 2 Seiberg--Witten basic class of $X\#n\overline{\CP}^2$. Let $X'$ be a closed connected oriented smooth 4-manifold with $b_2^+\equiv 3\pmod{4}$ and $b_1=0$ that admits a mod 2 Seiberg--Witten basic class $K'$ with $d_{X'}(K')=0$. We note that $K'$ is BF simple in $X'$ by \cref{Y:claim:SW simple=BF}. Also, there are many examples of $X'$ such as the $K3$ surface. By a result of Ishida and Sasahira \cite{IS}, we see that $K+ 3E_1+\dots+3E_n+K'$ is a Bauer--Furuta basic class of $X\#n\overline{\CP}^2\#X'$. 
\cref{Y:claim:non BF simple} thus shows that $K'$ is not BF simple, giving a contradiction.  
\end{proof}
As an application of \cref{thm:immersedadjunction}, we obtain the following immersed adjunction inequality.
\begin{theo}\label{Y:claim:adjunction_blowup}
Let $\alpha$ be a second homology class of $X$ represented by an immersed 2-sphere with $p$ positive double points and possibly with negative double points, where $p$ is a positive integer. If $X$ is of BF blowup simple type, then the following inequality holds for any Bauer--Furuta basic class $K$ of $X$. 
\begin{equation*}
\left|\langle K,\, \alpha \rangle \right| \,+\, \alpha\cdot\alpha \, \leq \, 2p-2. 
\end{equation*}
\end{theo}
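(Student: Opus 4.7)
The plan is to adapt the proof of \cref{thm:immersedadjunction} closely: blow up $X$ at all $p+n$ double points so that $\alpha$ becomes an embedded 2-sphere, then apply the embedded-sphere adjunction machinery (\cref{cor:adj} and \cref{prop:nonneg}). The new input is that BF blowup simple type rules out the ``$\BF\neq 0$'' alternative in the resulting dichotomy.

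More precisely, by replacing $K$ with $-K$ via conjugation of Spin-c structures if necessary, we may assume $\langle K,\alpha\rangle\geq 0$. Suppose for contradiction that $\langle K,\alpha\rangle + \alpha\cdot\alpha > 2p-2$. Since $K$ is characteristic, the left-hand side is even, so in fact $\langle K,\alpha\rangle + \alpha\cdot\alpha \geq 2p$. Following the proof of \cref{thm:immersedadjunction}, inside $\overline X := X \# (p+n)\overline{\CP}^2$ the class $\bar\alpha = \alpha - 2\sum_{j=1}^p e_j$ is represented by an embedded 2-sphere with $\bar\alpha\cdot\bar\alpha = \alpha\cdot\alpha - 4p$. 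By iterated application of \cref{thm:blowup}, the class $\bar K := K + \sum_{j=1}^{p+n} E_j$ is a Bauer--Furuta basic class of $\overline X$; let $\bar\fraks$ denote the corresponding Spin-c structure. Direct computation gives $\langle\bar K,\bar\alpha\rangle = \langle K,\alpha\rangle+2p>0$. If $\bar\alpha\cdot\bar\alpha\geq 0$, then \cref{cor:adj} would force $0 = \chi_-(\bar\alpha) \geq \bar\alpha\cdot\bar\alpha + |\langle\bar K,\bar\alpha\rangle|\geq 2p >0$, a contradiction; hence $\bar\alpha\cdot\bar\alpha<0$. Since $|\langle\bar K,\bar\alpha\rangle|+\bar\alpha\cdot\bar\alpha = \langle K,\alpha\rangle+\alpha\cdot\alpha - 2p \geq 0$, \cref{prop:nonneg} applies and yields $\BF(\overline X, \bar\fraks + \bar\alpha^*)\neq 0$.

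A short computation using $\bar\alpha^* = \alpha^* - 2\sum_{j=1}^p E_j$ gives
\[
\bar K + 2\bar\alpha^* \;=\; (K + 2\alpha^*) - 3\sum_{j=1}^p E_j + \sum_{j=p+1}^{p+n} E_j.
\]
By \cref{cor:basic}, the projection $K + 2\alpha^*$ is a Bauer--Furuta basic class of $X$. Iterating \cref{Y:claim:blow-up} using the BF blowup simple type hypothesis then shows that every Bauer--Furuta basic class of $\overline X$ lying over $K+2\alpha^*$ must have coefficient $\pm 1$ on each exceptional class $E_j$. The coefficient $-3$ appearing on $E_1,\ldots,E_p$ (a nonempty collection since $p\geq 1$) contradicts this, and the proof is complete.

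The main obstacle is the coefficient and sign bookkeeping in the expression for $\bar K+2\bar\alpha^*$; the assumption $p\geq 1$ is essential precisely because it guarantees that the anomalous coefficient $-3$ appears on at least one exceptional class, so that BF blowup simple type strictly rules out the ``$\BF\neq 0$'' alternative. The Kronheimer step and the application of \cref{prop:nonneg} are otherwise straightforward reductions to tools already established in the paper.
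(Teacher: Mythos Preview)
Your proof is correct and follows essentially the same route as the paper's. The only difference is packaging: the paper blows up enough double points to obtain an immersed sphere with $p-1$ positive double points and then invokes \cref{thm:immersedadjunction} as a black box, whereas you blow up all double points to obtain an embedded sphere and apply \cref{prop:nonneg} and \cref{cor:adj} directly---which is exactly what the proof of \cref{thm:immersedadjunction} does internally. Both arguments land on the same contradiction: the class $(K+2\alpha^*)-3E_1$ is a Bauer--Furuta basic class of $X\#\overline{\CP}^2$, violating BF simplicity of $K+2\alpha^*$. Your final step can be shortened: rather than iterating \cref{Y:claim:blow-up}, simply project $\bar K+2\bar\alpha^*$ to $X\#\overline{\CP}^2$ via \cref{cor:basic} (or \cref{thm:blowup}) and use the definition of BF simple directly, as the paper does.
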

\begin{proof}To the contrary, suppose that the above inequality does not hold.  We may assume that the inequality 
\begin{equation*}
\langle K,\, \alpha \rangle \,+\, \alpha\cdot\alpha \, > \, 2p-2 
\end{equation*}
holds, if necessary by replacing $\alpha$ with $-\alpha$. Furthermore, if necessary by resolving the double points using blowing ups, we may assume that $\alpha-2e_1$ is represented by an immersed 2-sphere having only $p-1$ positive double points in $X\#k\overline{\CP}^2$ for some positive integer $k$. 
In $X\#k\overline{\CP}^2$, the inequality 
\begin{equation*}
\langle K+E_1+\dots+E_k,\, \alpha-2e_1 \rangle \,+\, (\alpha-2e_1)\cdot(\alpha-2e_1) \, > \, 2(p-1)-2. 
\end{equation*}
holds, since the left hand slide is $\langle K,\, \alpha \rangle + \alpha\cdot \alpha-2$. 
\cref{thm:immersedadjunction} thus shows that $K+E_1+\dots+E_k+2PD(\alpha-2e_1)$ is a Bauer--Furuta basic class of $X\#\overline{\CP}^2$. Since this class is equal to $K+2PD(\alpha)-3E_1+E_2+\dots+E_{k}$, it follows from \cref{thm:blowup} that  $K+2PD(\alpha)$ and $K+2PD(\alpha)-3E_1$ are Bauer--Furuta basic classes of respectively $X$ and $X\#\overline{\CP}^2$. Hence, $X$ is not of BF blowup simple type, giving a contradiction.
\end{proof}

\begin{rem}
Bauer \cite[\S4]{B_survey} introduced a more refined version of the Bauer--Furuta invariant.
By using this refined invariant, Bauer extended \cref{thm:BF-SW} which gives the relation between the Seiberg--Witten invariants and the Bauer--Furuta invariants  to the case when $b_2^+\geq 2$.
Furthermore, as mentioned in \cite[\S8]{B_survey}, the connected-sum formula also holds for this version of the invariants.
Then several results in this section is generalized to the case when $b_2^+\geq 2$, if we define the notions of Bauer--Furuta basic class, BF simple and BF blowup simple type by using the refined version of the Bauer--Furuta invariant.
In fact, \cref{Y:claim:SW simple=BF}, \cref{Y:claim:symplectic}  and \cref{Y:claim:sum symplectic} are extended to the  the case when $b_2^+\geq 2$.
\end{rem}

  \section{The stable cohomotopy groups of complex projective spaces}\label{cohomotopy}

\subsection{Preliminaries}
  \subsubsection{Long exact sequence}

  Recall that if $A$ is a subcomplex of a pointed CW complex $X$, then for any pointed space $Y$, there is an exact sequence of pointed sets
  \[
    [X/A,Y]\to[X,Y]\to[A,Y].
  \]
  Then since direct limits preserve exact sequences, we get a long exact sequence 
  \[
    \cdots\to\{\Sigma(X/A),Y\}\to\{\Sigma X,Y\}\to\{\Sigma A,Y\}\to\{X/A,Y\}\to\{X,Y\}\to\{A,Y\}\to\cdots,
  \]
  where $\{Z,W\}$ denotes the stable homotopy set $\displaystyle\lim_{n\to\infty}[\Sigma^n,Z,\Sigma^nW]$. On the other hand, by the Freudenthal suspension theorem and the Blakers--Massey theorem, there is also an exact sequence
  \[
    \cdots\{Z,A\}\to\{Z,X\}\to\{Z,X/A\}\to\{Z,\Sigma A\}\to\{Z,\Sigma X\}\to\{Z,\Sigma(X/A)\}\to\cdots.
  \]


  \subsubsection{Stable homotopy groups of spheres}

  Let $\pi_n^S$ denote the $n$-th stable homotopy group of $S^0$. Then as in \cite{T}, we have $\pi^S_*=0$ for $*<0$ and
  \[
    \pi_0^S=\Z\{\iota\},\quad\pi_1^S=\Z/2\{\eta\},\quad\pi_2^S=\Z/2\{\eta^2\},\quad\pi_3^3=\Z/24\{\nu\},\quad\pi_4^S=\pi_5^S=0,
  \]
  where $\Z/n\{a\}$ means a cyclic group $\Z/n$ with generator $a$. Moreover, we have
  \[
    \eta^3=12\nu.
  \]


  \subsection{Stunted projective spaces}

  Let $\C P^n_k=\C P^n/\C P^{n-k}$ and $\H P^n_k=\H P^n/\H P^{n-k}$. Then
  \[
    \C P^n_k=S^{2(n-k+1)}\cup e^{2(n-k+2)}\cup\cdots\cup e^{2n}\quad\text{and}\quad\H P^n_k=S^{4(n-k+1)}\cup e^{4(n-k+2)}\cup\cdots\cup e^{4n}.
  \]

  \begin{lem}
    \label{CP}
    For $n\geq 3$, we have
    \[
     \C P^n_{n-2} = S^{2n-2}\cup_{(n-1)\eta}e^{2n}.
    \]
  \end{lem}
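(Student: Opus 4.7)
The plan is to realize this quotient as a two-cell complex $S^{2n-2}\cup_\alpha e^{2n}$ with attaching map $\alpha\in\pi_{2n-1}(S^{2n-2})$, and then identify $\alpha$ using Steenrod squares. Since $\C P^n$ has the standard CW structure with one cell in each even dimension $0,2,\dots,2n$ (with the Hopf bundle attaching maps), collapsing the $(2n-4)$-skeleton leaves exactly two cells, in dimensions $2n-2$ and $2n$, which gives the two-cell form.

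For $n\geq 3$ the group $\pi_{2n-1}(S^{2n-2})$ lies in the Freudenthal stable range ($2n-1<2(2n-2)-1$ iff $n\geq 3$), hence equals $\pi_1^S=\Z/2\{\eta\}$. Thus $\alpha=c\eta$ with $c\in\Z/2$, and the problem reduces to showing $c\equiv n-1\pmod 2$.

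To detect $c$, I would use the Steenrod square $\mathrm{Sq}^2$. For any two-cell complex $Y=S^m\cup_{c\eta}e^{m+2}$ the operation $\mathrm{Sq}^2\colon H^m(Y;\Z/2)\to H^{m+2}(Y;\Z/2)$ is multiplication by $c$, since $\mathrm{Sq}^2$ detects $\eta\in\pi_1^S$ (the classical computation on the mapping cone of $\eta$, which is $\C P^2$). It therefore suffices to compute $\mathrm{Sq}^2$ on the mod-$2$ cohomology of $\C P^n/\C P^{n-2}$. Naturality under the quotient map $q\colon\C P^n\to\C P^n/\C P^{n-2}$ lets me do this computation inside $H^*(\C P^n;\Z/2)=\Z/2[x]/(x^{n+1})$ with $|x|=2$, since $q^*$ is an isomorphism on the top two cohomology groups. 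There $\mathrm{Sq}^2(x)=x^2$, and an inductive application of the Cartan formula (using $\mathrm{Sq}^1(x)=0$) gives $\mathrm{Sq}^2(x^k)=k\,x^{k+1}$; specialising to $k=n-1$ yields $\mathrm{Sq}^2(x^{n-1})=(n-1)x^n$. Hence $c\equiv n-1\pmod 2$, and $\alpha=(n-1)\eta$.

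There is no significant obstacle here; the only points needing care are verifying the stable range bound $n\geq 3$ and invoking the classical identification of $\mathrm{Sq}^2$ as the detector of $\eta$ on a two-cell complex.
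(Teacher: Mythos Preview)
Your proposal is correct and takes essentially the same approach as the paper: the paper's proof is the single sentence ``Look at the action of $\mathrm{Sq}^2$ on the mod 2 cohomology of $\C P^n_{n-2}$,'' and your argument spells out exactly this computation in detail.
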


  \begin{proof}
    Look at the action of $\mathrm{Sq}^2$ on the mod 2 cohomology of $\C P^n_{n-2}$.
  \end{proof}

  In \cite{J}, quasi-projective spaces are defined, and in the quaternionic case, we have
  \[
    Q^n=S^3\cup e^7\cup\cdots\cup e^{4n-1}.
  \]
  Let $Q^n_k=Q^n/Q^{n-k}$. The following is proved by James \cite{J}.

  \begin{lem}
    \label{HP-Q}
    For $n\geq 3$, we have
    \[
      \H P^n_{n-2}=S^{4n-4}\cup_{(n-1)\nu}e^{4n}\quad\text{and}\quad Q^n_{n-2}=S^{4n-5}\cup_{n\nu}e^{4n-1}.
    \]
  \end{lem}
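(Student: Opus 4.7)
The plan is to identify each complex as a stable two-cell complex of the form $S^m \cup_\alpha e^{m+4}$ whose attaching map $\alpha$ lies in $\pi_3^S = \Z/24\langle \nu\rangle$, then determine $\alpha$ by an Adams $e$-invariant computation. For $\H P^n_{n-2} = S^{4n-4} \cup_{f_n} e^{4n}$, I would use $K^0(\H P^n) = \Z[v]/(v^{n+1})$, where $v = V - 2$ for the tautological symplectic line bundle $V$. The character identity $\psi^k V = V^{(k)} - V^{(k-2)}$ in $R(\operatorname{Sp}(1))$ yields
\[
\psi^k v \equiv k^2 v + \tfrac{k^2(k^2-1)}{12}\, v^2 \pmod{v^3}.
\]
Expanding $\psi^k v^{n-1}$ modulo $v^{n+1}$ extracts the off-diagonal coefficient of $\psi^k$ on $\tilde K^0(\H P^n_{n-2}) = \Z\langle v^{n-1}, v^n\rangle$ as $(n-1)k^{2n-2}(k^2-1)/12$, so the complex Adams $e$-invariant is $e_\C(f_n) \equiv (n-1)/12 \pmod \Z$, pinning down $f_n$ modulo the kernel of $e_\C$.

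To refine from modulo $12$ to modulo $24$, I would promote the computation to real $K$-theory: since $V$ is symplectic, it lifts to a class in $KSp^0(\H P^n) \cong KO^{-4}(\H P^n)$, and the real Adams $e_\R$-invariant, which is injective on $\pi_3^S$ with image $\tfrac{1}{24}\Z/\Z$, produces $e_\R(f_n) \equiv (n-1)/24 \pmod \Z$, forcing $f_n = (n-1)\nu$. The case $Q^n_{n-2} = S^{4n-5} \cup_{g_n} e^{4n-1}$ is handled by the same technique, either via an analogous $e$-invariant calculation using the embedding $Q^n \hookrightarrow \operatorname{Sp}(n)$ and the symplectic splitting principle, or by a direct comparison between $Q^n$ and $\H P^n$ that accounts for the extra $\nu$ in the formula $g_n = n\nu$ via a single-cell shift in the respective cell structures.

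The main obstacle will be the $2$-primary refinement from modulo $12$ to modulo $24$, since the complex Adams $e_\C$-invariant alone surjects only onto $\tfrac{1}{12}\Z/\Z \subset \Q/\Z$; invoking the real $e_\R$-invariant via the symplectic structure of $V$ is essential. This contrasts with the proof sketch of \cref{CP}, where a single Steenrod square computation suffices because $\pi_1^S = \Z/2$ is entirely detected by $\mathrm{Sq}^2$ on mod-$2$ cohomology.
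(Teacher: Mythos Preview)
The paper does not prove this lemma; it simply attributes the result to James \cite{J}. Your proposal therefore supplies an argument where the paper offers only a citation, so the two cannot be compared head-to-head. That said, your $e$-invariant strategy for $\H P^n_{n-2}$ is a correct and genuinely different route: the formula $\psi^k v\equiv k^2v+\tfrac{k^2(k^2-1)}{12}v^2\pmod{v^3}$ is right, it gives $e_\C(f_n)\equiv(n-1)/12$, and you correctly isolate the residual ambiguity $12\nu=\eta^3$ as the crux, to be killed by passing to $KO$ (or $KSp$) via the quaternionic structure of $V$. That refinement does go through and yields $f_n=(n-1)\nu$. James's 1959 argument predates the Adams $e$-invariant entirely and proceeds by a direct analysis of the cell structure of Stiefel manifolds and their intrinsic joins; your approach is more conceptual for a reader comfortable with Adams' machinery, at the cost of the $KO$-normalisation needed for the $2$-primary part.

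The $Q^n_{n-2}$ half of your plan is noticeably thinner. ``An analogous computation'' via $Q^n\hookrightarrow Sp(n)$ is feasible, but it requires setting up Adams operations on $K^{-1}(Q^n)$ and again extracting a real refinement, none of which you have written down. The alternative ``direct comparison between $Q^n$ and $\H P^n$'' accounting for the shift from $(n-1)\nu$ to $n\nu$ by a ``single-cell shift'' is not a standard statement; you would need to exhibit an explicit stable map or cofibre relation between $Q^n_{n-2}$ and some $\H P^m_{m-2}$ before this counts as an argument. As a plan it is plausible, but as written it is only a gesture toward a proof.
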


For $n\geq 4$,  consider the cell decompositions
  \[
    \C P^n_{n-3}=S^{2n-4}\cup_\alpha e^{2n-2}\cup_\beta e^{2n}\quad\text{and}\quad \C P^n_{n-2}=S^{2n-2}\cup_{\bar{\beta}}e^{2n},
  \]
  where $\bar{\beta}$ is the composite of maps \[
  S^{2n-1}\xrightarrow{\beta}S^{2n-4}\cup_\alpha e^{2n-2}\to (S^{2n-4}\cup_\alpha e^{2n-2})/S^{2n-4}=S^{2n-2}.
  \]  
  Then by Lemma \ref{CP}, $\alpha=(n-2)\eta$ and $\bar{\beta}=(n-1)\eta$. Thus for $n$ odd,
  \[
    \C P^n_{n-3}=S^{2n-4}\cup_\eta e^{2n-2}\cup_{d_n\nu} e^{2n}
  \]
  and for $n$ even,
  \[
    \C P^n_{n-3}=(S^{2n-4}\vee S^{2n-2})\cup_{d_n\nu+\eta} e^{2n},
  \]
for some integer $d_n$.
  We determine the integer $d_n$. For degree reasons, $\Sigma\C P^n_{n-3}$ is the $(2n+1)$-skeleton of the Stiefel manifold $U(n+1)/U(n-2)$, and $Q^n_{n-3}$ is the $(4n-1)$-skeleton of $Sp(n)/Sp(n-3)$. The natural inclusion $Sp(n)\to U(2n)$ induces a map $Sp(n)/Sp(n-3)\to U(2n)/U(2n-6)$, and so by the cellular approximation theorem, there is a dotted arrow $f$ in the homotopy commutative diagram
  \[
    \xymatrix{
      Q^n_{n-3}\ar@{-->}[r]^f\ar[d]&\Sigma\C P^{2n-1}_{2n-4}\ar[d]\\
      Sp(n)/Sp(n-3)\ar[r]&U(2n)/U(2n-3)
    }
  \]
  such that $f$ is identified with the inclusion
  \[
    S^{4n-5}\cup_{d_{2n-1}\nu} e^{4n-1}\to S^{4n-5}\cup_\eta e^{4n-3}\cup_{d_{2n-1}\nu} e^{4n-1}.
  \]
  Then by Lemma \ref{HP-Q}, $d_{2n-1}=n$. On the other hand, the natural map $\C P^{2n}_{2n-3}\to\H P^n_{n-2}$ is identified with the pinch map
  \[
    (S^{4n-4}\vee S^{4n-2})\cup_{d_{2n}\nu+\eta} e^{4n}\to S^{4n-4}\cup_{d_{2n}\nu} e^{4n}
  \]
  and so by Lemma \ref{HP-Q}, $d_{2n}=n-1$. Summarizing, we obtain:

  \begin{prop}
    \label{cell decomposition}
    Let $n\geq 4$.
    For $n$ odd, 
    \[
      \C P^n_{n-3}=S^{2n-4}\cup_\eta e^{2n-2}\cup_{\frac{n+1}{2}\nu}e^{2n}
    \]
    and for $n$ even, 
    \[
      \C P^n_{n-3}=(S^{2n-4}\vee S^{2n-2})\cup_{\frac{n-2}{2}\nu+\eta}e^{2n}.
    \]
  \end{prop}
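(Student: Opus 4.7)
The plan is to identify the two attaching maps of the three-cell complex $\C P^n_{n-3} = S^{2n-4}\cup_\alpha e^{2n-2}\cup_\beta e^{2n}$ by combining \cref{CP} (on subcomplexes/quotients) with a parity analysis and a comparison with quaternionic data.

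First I would pin down $\alpha$ and the pinched version $\bar\beta$ of $\beta$. The $(2n-2)$-skeleton of $\C P^n_{n-3}$ is $\C P^{n-1}_{n-3}$, so \cref{CP} applied with $n$ replaced by $n-1$ (allowed since $n\geq 4$) yields $\alpha=(n-2)\eta$ stably. Dually, the quotient by $S^{2n-4}$ is $\C P^n_{n-2} = S^{2n-2}\cup_{\bar\beta}e^{2n}$, and \cref{CP} gives $\bar\beta=(n-1)\eta$. Splitting by parity: if $n$ is odd, $\alpha=\eta$ and $\bar\beta=0$; if $n$ is even, $\alpha=0$ (so the $(2n-2)$-skeleton splits as $S^{2n-4}\vee S^{2n-2}$) and $\bar\beta=\eta$.

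Next I would locate $\beta$ in the appropriate stable homotopy group. For $n$ odd, the stable cofiber sequence $S^{2n-4}\to S^{2n-4}\cup_\eta e^{2n-2}\to S^{2n-2}$ yields a long exact sequence in $\pi_*^S$; because multiplication by $\eta$ sends $\pi_1^S=\Z/2\{\eta\}$ injectively into $\pi_2^S=\Z/2\{\eta^2\}$, the relevant group injects into $\pi_3^S=\Z/24$, forcing $\beta=d_n\nu$ for some integer $d_n$. For $n$ even, $\pi_{2n-1}^S(S^{2n-4}\vee S^{2n-2})=\Z/24\{\nu\}\oplus\Z/2\{\eta\}$, and the $\eta$-coordinate of $\beta$ matches $\bar\beta=\eta$ under the collapse, so $\beta=d_n\nu+\eta$ for some integer $d_n$.

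To determine $d_n$, I would set up comparison maps between unitary and quaternionic Stiefel-type manifolds. For $n$ odd, write $n=2m-1$ and use the inclusion $Sp(m)\to U(2m)$, which induces $Sp(m)/Sp(m-3)\to U(2m)/U(2m-6)$. Using that $\Sigma\C P^\ell_{\ell-3}$ is the $(2\ell+1)$-skeleton of $U(\ell+1)/U(\ell-2)$ and that $Q^m_{m-3}$ is the $(4m-1)$-skeleton of $Sp(m)/Sp(m-3)$, cellular approximation produces a map of two-cell subcomplexes $Q^m_{m-2}\hookrightarrow S^{4m-5}\cup_\eta e^{4m-3}\cup_{d_n\nu}e^{4m-1}$; comparison with \cref{HP-Q}, which states $Q^m_{m-2}=S^{4m-5}\cup_{m\nu}e^{4m-1}$, forces $d_n=m=(n+1)/2$. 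For $n$ even, write $n=2m$ and use the natural map $\C P^{2m}\to \H P^m$, producing a pinch $\C P^{2m}_{2m-3}\to \H P^m_{m-2}$ that collapses the wedge summand $S^{4m-2}$ and hence the $\eta$-part of $\beta$. Comparing the image with \cref{HP-Q}, namely $\H P^m_{m-2}=S^{4m-4}\cup_{(m-1)\nu}e^{4m}$, yields $d_n=m-1=(n-2)/2$.

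The main obstacle is the correct identification of these comparison maps at the level of skeleta: one must justify that the cellular approximation of the map of Stiefel-type manifolds indeed restricts to the described subcomplex inclusion (odd case) and pinch map (even case), so that reading off the coefficient of $\nu$ against the known attaching map of $Q^m_{m-2}$ (resp.\ $\H P^m_{m-2}$) determines $d_n$. Once these identifications are in place, the arithmetic is immediate and the two parity-dependent formulas follow.
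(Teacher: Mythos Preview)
Your proposal is correct and follows essentially the same route as the paper: identify $\alpha$ and $\bar\beta$ via \cref{CP}, then pin down the coefficient $d_n$ of $\nu$ by comparing with quaternionic data through the maps $Sp(m)\to U(2m)$ (odd case) and $\C P^{2m}\to\H P^m$ (even case), invoking \cref{HP-Q} for $Q^m_{m-2}$ and $\H P^m_{m-2}$. One small slip worth fixing: in the odd case the cofiber long exact sequence shows that $\pi_3^S$ \emph{surjects onto} $\pi_{2n-1}^S(S^{2n-4}\cup_\eta e^{2n-2})$ rather than the latter injecting into the former, but your conclusion that $\beta=d_n\nu$ for some $d_n$ is unaffected.
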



  \subsection{Stable cohomotopy groups}

  Let $\pi^*(X)$ denote the stable cohomotopy group. Then there is a natural transformation
  \[
    h^*\colon\pi^*(X)\to H^*(X)
  \]
  called the Hurewicz homomorphism. We determine the kernel and the cokernel of the Hurewicz homomorphism $h^{2n-k}\colon\pi^{2n-k}(\C P^n)\to H^{2n-k}(\C P^n)$ for $0\le k\le 6$.

  \begin{prop}
    \label{k=0}
    The Hurewicz homomorphism $h^{2n}\colon\pi^{2n}(\C P^n)\to H^{2n}(\C P^n)$ is an isomorphism.
  \end{prop}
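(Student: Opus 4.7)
The plan is to apply the cofiber sequence
\[
\mathbb{CP}^{n-1}\hookrightarrow \mathbb{CP}^n \xrightarrow{q} \mathbb{CP}^n/\mathbb{CP}^{n-1}=S^{2n},
\]
where $q$ is the collapse onto the top cell, and to feed it into the long exact sequence for stable cohomotopy recalled in the preliminaries. With $Y=S^{2n}$ this produces the segment
\[
\pi^{2n-1}(\mathbb{CP}^{n-1})\longrightarrow \pi^{2n}(S^{2n})\xrightarrow{q^*} \pi^{2n}(\mathbb{CP}^n)\longrightarrow \pi^{2n}(\mathbb{CP}^{n-1}).
\]

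The key step is a dimension count to kill the two outer groups. Since $\dim\mathbb{CP}^{n-1}=2n-2$ and stable cohomotopy $\pi^k$ of a finite CW complex of dimension $<k$ vanishes (any map $\Sigma^N\mathbb{CP}^{n-1}\to S^{N+k}$ can be cellularly pushed off the top cell of the target, hence is null-homotopic for $N$ large), one has $\pi^{2n-1}(\mathbb{CP}^{n-1})=\pi^{2n}(\mathbb{CP}^{n-1})=0$. Thus $q^*\colon \mathbb{Z}=\pi^{2n}(S^{2n})\xrightarrow{\cong}\pi^{2n}(\mathbb{CP}^n)$ is an isomorphism, so $\pi^{2n}(\mathbb{CP}^n)$ is free cyclic, generated by the stable class of $q$.

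To finish, I would observe that the Hurewicz homomorphism is natural with respect to $q$, giving a commutative square
\[
\xymatrix{
\pi^{2n}(S^{2n})\ar[r]^{q^*}\ar[d]_{h^{2n}}& \pi^{2n}(\mathbb{CP}^n)\ar[d]^{h^{2n}}\\
H^{2n}(S^{2n})\ar[r]^{q^*}& H^{2n}(\mathbb{CP}^n).
}
\]
The left vertical map is the standard Hurewicz isomorphism $\pi^{2n}(S^{2n})\cong H^{2n}(S^{2n})\cong\mathbb{Z}$, and the lower horizontal map is an isomorphism because $q$ has degree one on the top cell. Since the upper $q^*$ was just shown to be an isomorphism, the right vertical map $h^{2n}$ is an isomorphism as well.

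The only subtlety I anticipate is carefully verifying the vanishing $\pi^k(X)=0$ for $\dim X<k$ (the argument uses the Freudenthal/connectivity fact that $S^{N+k}$ is $(N+k-1)$-connected, so maps out of an $(N+k-1)$-dimensional CW complex are null-homotopic after one suspension); this is the routine step that must be stated cleanly rather than the hard part. The structural content of the proof is really just the combination of the collapse sequence with this connectivity bound, together with the naturality square for~$h$.
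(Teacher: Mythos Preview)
Your proof is correct and follows essentially the same approach as the paper: both use the cofiber sequence $\C P^{n-1}\hookrightarrow\C P^n\to\C P^n/\C P^{n-1}=S^{2n}$, invoke the vanishing $\pi^{2n-1}(\C P^{n-1})=\pi^{2n}(\C P^{n-1})=0$ for dimension reasons to identify $\pi^{2n}(\C P^n)\cong\pi^{2n}(S^{2n})$, and then conclude via the naturality square for the Hurewicz map. The paper writes $\C P^n_{n-1}$ in place of your $S^{2n}$, but the arguments are otherwise identical.
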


  \begin{proof}
    There is an exact sequence
    \[
      \pi^{2n-1}(\C P^{n-1})\to\pi^{2n}(\C P^n_{n-1})\to\pi^{2n}(\C P^n)\to\pi^{2n}(\C P^{n-1}).
    \]
    Clearly, $\pi^*(\C P^{n-1})=0$ for $*=2n-1,2n$, so the map $\pi^{2n}(\C P^n_{n-1})\to\pi^{2n}(\C P^n)$ is an isomorphism. There is also a commutative diagram
    \[
      \xymatrix{
        \pi^{2n}(\C P^n_{n-1})\ar[r]^\cong\ar[d]_{h^{2n}}&\pi^{2n}(\C P^n)\ar[d]^{h^{2n}}\\
        H^{2n}(\C P^n_{n-1})\ar[r]^\cong&H^{2n}(\C P^n).
      }
    \]
    Since the left vertical map is an isomorphism, the proof is finished.
  \end{proof}

  \begin{prop}
    \label{k=1}
    $\mathrm{Ker}\,h^{2n-1}\cong\Z/(2,n+1)$ and $\mathrm{Coker}\,h^{2n-1}=0$.
  \end{prop}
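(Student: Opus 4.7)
The plan is to first observe that $H^{2n-1}(\C P^n)=0$, so the Hurewicz map lands in the zero group: the cokernel is automatically trivial, and the kernel coincides with all of $\pi^{2n-1}(\C P^n)$. The task therefore reduces to showing $\pi^{2n-1}(\C P^n)\cong \Z/(2,n+1)$.

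To compute $\pi^{2n-1}(\C P^n)$, I will replace $\C P^n$ by a two-cell complex. Consider the cofibration $\C P^{n-2}\to \C P^n\to \C P^n/\C P^{n-2}$. Since $\dim \C P^{n-2}=2n-4$, one has $\pi^{2n-2}(\C P^{n-2})=\pi^{2n-1}(\C P^{n-2})=0$ (no space can carry a non-trivial stable cohomotopy class in degree above its dimension), so the long exact sequence collapses to an isomorphism $\pi^{2n-1}(\C P^n)\cong \pi^{2n-1}(\C P^n/\C P^{n-2})$. By Lemma~\ref{CP}, $\C P^n/\C P^{n-2}=S^{2n-2}\cup_{(n-1)\eta}e^{2n}$. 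The very small case $n=1$ is handled directly by $\C P^1=S^2$, giving $\pi^1(\C P^1)=\pi^S_1=\Z/2=\Z/(2,2)$.

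Next I will apply the long exact sequence to the cofibration $S^{2n-2}\to S^{2n-2}\cup_{(n-1)\eta}e^{2n}\to S^{2n}$ in degree $2n-1$, which gives
\[
\pi^{2n-2}(S^{2n-2})\xrightarrow{\delta}\pi^{2n-1}(S^{2n})\to \pi^{2n-1}(\C P^n/\C P^{n-2})\to \pi^{2n-1}(S^{2n-2}),
\]
i.e.\ $\Z\xrightarrow{\delta}\Z/2\to \pi^{2n-1}(\C P^n/\C P^{n-2})\to 0$ using $\pi^S_{-1}=0$ and $\pi^S_1=\Z/2\{\eta\}$. The Puppe connecting homomorphism $\delta$ is (up to suspension) precomposition with the attaching map, so $\delta(k)=k\cdot(n-1)\eta\in\pi^S_1$. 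This vanishes precisely when $n-1$ is even, i.e.\ when $n$ is odd: then $\pi^{2n-1}(\C P^n)\cong\Z/2$; when $n$ is even, $\delta$ is surjective and $\pi^{2n-1}(\C P^n)=0$. Both cases agree with $\Z/(2,n+1)$.

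The only non-routine point in this plan is the identification of $\delta$ with multiplication by the attaching class $(n-1)\eta$. This is the standard fact that for a cofibration $A\to X\to X/A$ obtained by attaching a cell along $\phi$, the Puppe map $X/A\to\Sigma A$ is (up to sign) $\Sigma\phi$; everything else is immediate from the groups already tabulated in the subsection on stable homotopy of spheres.
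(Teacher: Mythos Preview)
Your proof is correct and follows essentially the same route as the paper: reduce to $\C P^n/\C P^{n-2}$ using that $\C P^{n-2}$ has dimension below $2n-2$, identify this quotient via Lemma~\ref{CP}, and run the cofibration long exact sequence to compute $\pi^{2n-1}$. The only cosmetic difference is that for $n$ odd the paper first observes $(n-1)\eta=0$ so the two-cell complex splits as $S^{2n-2}\vee S^{2n}$ and reads off $\pi^S_{-1}\oplus\pi^S_1$ directly, whereas you treat both parities uniformly through the exact sequence and detect the parity in the connecting map $\delta$; the content is identical.
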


  \begin{proof}
    Since $H^{2n-1}(\C P^n)=0$, $\mathrm{Ker}\,h^{2n-1}=\pi^{2n-1}(\C P^n)$ and $\mathrm{Coker}\,h^{2n-1}=0$. Arguing as in the proof of Proposition \ref{k=0},
    \[
      \pi^{2n-1}(\C P^n)\cong\pi^{2n-1}(\C P^n_{n-2}).
    \]
    For $n$ odd, $\C P^n_{n-2}=S^{2n-2}\vee S^{2n}$ by Lemma \ref{CP}, and so $\pi^{2n-1}(\C P^n_{n-2})\cong\pi^S_{-1}\oplus\pi^S_1\cong\Z/2$. For $n$ even, $\C P^n_{n-2}=S^{2n-2}\cup_\eta e^{2n}$ by Lemma \ref{CP}, so that there is an exact sequence
    \[
      \pi_0^S\xrightarrow{\eta^*}\pi^S_1\to\pi^{2n-1}(\C P^n_{n-2})\to\pi^S_{-1}.
    \]
    Then $\pi^{2n-1}(\C P^n_{n-2})=0$, completing the proof.
  \end{proof}

  \begin{prop}
    \label{k=2}
    $\mathrm{Ker}\,h^{2n-2}\cong\Z/(2,n-1)$ and $\mathrm{Coker}\,h^{2n-2}=\Z/(2,n)$.
  \end{prop}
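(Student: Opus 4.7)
The plan is to mirror the arguments of \cref{k=0} and \cref{k=1}: first reduce $\pi^{2n-2}(\C P^n)$ to $\pi^{2n-2}(\C P^n_{n-3})$, and then exploit the explicit cell decomposition provided by \cref{cell decomposition}. The cofibration $\C P^{n-3}\hookrightarrow\C P^n\to\C P^n_{n-3}$ produces a long exact sequence whose outer groups $\pi^{2n-3}(\C P^{n-3})$ and $\pi^{2n-2}(\C P^{n-3})$ both vanish (since $\C P^{n-3}$ has dimension $2n-6$). A commutative ladder with the corresponding sequence in cohomology then identifies both $\pi^{2n-2}$ and $h^{2n-2}$ on $\C P^n$ with those on $\C P^n_{n-3}$, so it suffices to work on the stunted space.

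For the main computation I will split on the parity of $n$ and, in each case, run two Puppe sequences: one to compute the cohomotopy of the $(2n-2)$-skeleton, and one to attach the top $2n$-cell. For $n$ odd, set $A=S^{2n-4}\cup_\eta e^{2n-2}$. A short chase in the Puppe sequence of $S^{2n-4}\to A\to S^{2n-2}$, using $\pi^S_{-2}=\pi^S_{-1}=0$ together with the surjectivity of $\eta^*\colon\pi^S_0\to\pi^S_1$, yields $\pi^{2n-3}(A)=0$ and $\pi^{2n-2}(A)\cong\Z$, generated by the collapse $q\colon A\to S^{2n-2}$. Since the attaching map $\beta$ of the $2n$-cell factors through $S^{2n-4}$ as $\tfrac{n+1}{2}\nu$, the connecting map $\pi^{2n-2}(A)\to\pi^{2n-1}(S^{2n})\cong\Z/2$ vanishes, because $q\circ\beta$ is null (the map $q$ collapses $S^{2n-4}$). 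The Puppe sequence for $A\to\C P^n_{n-3}\to S^{2n}$ thus reduces to a split short exact sequence $0\to\Z/2\to\pi^{2n-2}(\C P^n_{n-3})\to\Z\to 0$, giving $\pi^{2n-2}(\C P^n_{n-3})\cong\Z\oplus\Z/2$.

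For $n$ even, set $B=S^{2n-4}\vee S^{2n-2}$, so that $\pi^{2n-2}(B)\cong\Z$ and $\pi^{2n-3}(B)\cong\Z/2$, the latter generated by $(0,\eta)$. With $\beta=(\tfrac{n-2}{2}\nu,\eta)$, composition with $(0,\eta)$ gives $\eta^2$, so the connecting map $\pi^{2n-3}(B)\to\pi^{2n-2}(S^{2n})\cong\Z/2$ is an isomorphism; and composition of the generator of $\pi^{2n-2}(B)$ with $\beta$ gives $\eta\in\pi^S_1$, so the other connecting map $\pi^{2n-2}(B)\to\pi^{2n-1}(S^{2n})\cong\Z/2$ has kernel $2\Z$. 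The Puppe sequence for $B\to\C P^n_{n-3}\to S^{2n}$ therefore collapses to $\pi^{2n-2}(\C P^n_{n-3})\cong 2\Z\cong\Z$, generated by twice the class that pulls back to the generator of $\pi^{2n-2}(B)$.

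The remaining step is to read off the Hurewicz map in each case. For $n$ odd, the vanishing of $q\circ\beta$ shows that $q$ extends to some $Q\colon\C P^n_{n-3}\to S^{2n-2}$, and $Q$ has Hurewicz image equal to the generator of $H^{2n-2}$; the $\Z/2$ image of $\pi^{2n-2}(S^{2n})$ lies in $\ker h^{2n-2}$ by naturality applied to the pinch $\C P^n_{n-3}\to S^{2n}$, since $H^{2n-2}(S^{2n})=0$. This gives $\ker h^{2n-2}\cong\Z/2$ and $\coker h^{2n-2}=0$, matching $(2,n-1)=2$ and $(2,n)=1$. For $n$ even, the generator of $\pi^{2n-2}(\C P^n_{n-3})\cong\Z$ is twice a class whose Hurewicz image is the generator of $H^{2n-2}$, so $h^{2n-2}$ is multiplication by $2$, yielding $\ker h^{2n-2}=0$ and $\coker h^{2n-2}\cong\Z/2$, matching $(2,n-1)=1$ and $(2,n)=2$. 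The main bookkeeping difficulty I anticipate lies in pinning down the two connecting homomorphisms in each parity; once the attaching map $\beta$ from \cref{cell decomposition} is composed with explicit generators, this reduces to a computation in $\pi^S_1$ and $\pi^S_2$, controlled by the relation $\eta^2\neq 0$.
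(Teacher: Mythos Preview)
Your argument is correct, but it is more elaborate than necessary. The paper reduces only to $\C P^n_{n-2}$ rather than to $\C P^n_{n-3}$: since $\dim\C P^{n-2}=2n-4<2n-3$, the restriction $\pi^{2n-2}(\C P^n_{n-2})\to\pi^{2n-2}(\C P^n)$ is already an isomorphism, and $h^{2n-2}$ is identified with the restriction $\pi^{2n-2}(\C P^n_{n-2})\to\pi^{2n-2}(S^{2n-2})=\pi^S_0$ along the bottom cell. By \cref{CP} this sits in the single Puppe sequence
\[
  \pi^S_1\xrightarrow{(n-1)\eta^*}\pi^S_2\to\pi^{2n-2}(\C P^n_{n-2})\to\pi^S_0\xrightarrow{(n-1)\eta^*}\pi^S_1,
\]
from which the kernel $\Z/(2,n-1)$ and cokernel $\Z/(2,n)$ can be read off directly, with no parity split. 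Your route instead passes through the three-cell complex $\C P^n_{n-3}$, which forces you to invoke the harder \cref{cell decomposition}, run two nested Puppe sequences in each parity, and keep track of the extension $Q$ of $q$. All of that is sound, but it duplicates work already encoded in the simpler two-cell model.
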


  \begin{proof}
    As in the proof of Proposition \ref{k=0}, we can see that the map $\pi^{2n-2}(\C P^n_{n-2})\to\pi^{2n-2}(\C P^n)$ is an isomorphism. Consider the inclusion of the bottom cell $S^{2n-2}\to\C P^n_{n-2}$. Then we get a commutative diagram
    \[
      \xymatrix{
        \pi^{2n-2}(S^{2n-2})\ar[d]^{h^{2n-2}}_\cong&\pi^{2n-2}(\C P^n_{n-2})\ar[r]^\cong\ar[l]\ar[d]^{h^{2n-2}}&\pi^{2n-2}(\C P^n)\ar[d]^{h^{2n-2}}\\
        H^{2n-2}(S^{2n-2})&H^{2n-2}(\C P^n_{n-2})\ar[r]^\cong\ar[l]_\cong&H^{2n-2}(\C P^n).
      }
    \]
    Then it is sufficient to compute the kernel and the cokernel of the map $\pi^{2n-2}(\C P^n_{n-2})\to\pi^{2n-2}(S^{2n-2})=\pi^S_0$. By Lemma \ref{CP}, there is an exact sequence
    \[
      \pi^S_1\xrightarrow{(n-1)\eta^*}\pi^S_2\to\pi^{2n-2}(\C P^n_{n-2})\to\pi^S_0\xrightarrow{(n-1)\eta^*}\pi^S_1.
    \]
    Thus the proof is complete.
  \end{proof}

  \begin{prop}
    \label{k=3}
    $\mathrm{Coker}\,h^{2n-3}=0$ and
    \[
      \mathrm{Ker}\,h^{2n-3}\cong
      \begin{cases}
        \Z/(24,n+1)&n\text{ odd}\\
        \Z/\frac{(24,n-2)}{2}&n\text{ even}.
      \end{cases}
    \]
  \end{prop}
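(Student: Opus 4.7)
The plan starts from the vanishing $H^{2n-3}(\C P^n)=0$, which gives $\mathrm{Coker}\,h^{2n-3}=0$ for free and identifies $\mathrm{Ker}\,h^{2n-3}$ with all of $\pi^{2n-3}(\C P^n)$, so the whole task is to compute this stable cohomotopy group. I will follow the pattern of Propositions \ref{k=0}, \ref{k=1} and \ref{k=2}: reduce to a stunted projective space and then extract the answer from the cell decomposition in Proposition \ref{cell decomposition}.

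First, from the cofibration $\C P^{n-3}\hookrightarrow \C P^n\to \C P^n_{n-3}$ and the long exact sequence in stable cohomotopy, using that $\dim \C P^{n-3}=2n-6$ together with the vanishing of $\pi_*^S$ in negative degrees (so that $\pi^k(\C P^{n-3})=0$ for $k\ge 2n-5$ by a cell-by-cell induction), one gets $\pi^{2n-3}(\C P^n_{n-3})\xrightarrow{\cong}\pi^{2n-3}(\C P^n)$. Then the cofibration $S^{2n-4}\hookrightarrow \C P^n_{n-3}\to \C P^n_{n-2}$ gives
\[
\pi^{2n-4}(S^{2n-4})\xrightarrow{\partial^\ast}\pi^{2n-3}(\C P^n_{n-2})\to\pi^{2n-3}(\C P^n_{n-3})\to\pi^{2n-3}(S^{2n-4})=0,
\]
so $\pi^{2n-3}(\C P^n_{n-3})\cong\mathrm{coker}\,\partial^\ast$, and $\partial^\ast$ sends the canonical generator to the Puppe boundary $\partial\colon \C P^n_{n-2}\to S^{2n-3}$, whose stable class will be read off from Proposition \ref{cell decomposition}.

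For $n$ odd, Lemma \ref{CP} gives $\C P^n_{n-2}=S^{2n-2}\vee S^{2n}$, hence $\pi^{2n-3}(\C P^n_{n-2})\cong \pi_1^S\oplus \pi_3^S=\Z/2\{\eta\}\oplus\Z/24\{\nu\}$, and by Proposition \ref{cell decomposition} the two components of $\partial$ are $\eta$ and $\tfrac{n+1}{2}\nu$; an elementary calculation identifies
\[
\mathrm{coker}\bigl(\Z\xrightarrow{(1,(n+1)/2)}\Z/2\oplus\Z/24\bigr)\cong \Z/(24,n+1).
\]
For $n$ even, $\C P^n_{n-2}=S^{2n-2}\cup_\eta e^{2n}$ by Lemma \ref{CP}, and the cofibration $S^{2n-2}\to \C P^n_{n-2}\to S^{2n}$ together with the relation $\eta^3=12\nu$ gives $\pi^{2n-3}(\C P^n_{n-2})\cong \Z/12$ generated by the pullback of $\nu$ along the pinch map to $S^{2n}$; Proposition \ref{cell decomposition} identifies $\partial$ with (the pullback of) $\tfrac{n-2}{2}\nu$ under this presentation, so the cokernel is $\Z/\gcd(12,(n-2)/2)=\Z/\tfrac{(24,n-2)}{2}$.

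The main technical point is the identification of the components of $\partial$ from Proposition \ref{cell decomposition}: the restriction of $\partial$ to the $(2n-2)$-sphere summand or subcomplex of $\C P^n_{n-2}$ is the suspension of the attaching map of the $(2n-2)$-cell of $\C P^n_{n-3}$ to $S^{2n-4}$, while the component pulled back from the top $2n$-sphere captures the $S^{2n-4}$-part of the attaching map of the $2n$-cell of $\C P^n_{n-3}$; both pieces of data are furnished by Proposition \ref{cell decomposition}.
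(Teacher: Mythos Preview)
Your proof is correct and follows essentially the same strategy as the paper: reduce to $\pi^{2n-3}(\C P^n_{n-3})$ and read off the answer from the cell structure of Proposition~\ref{cell decomposition}. For $n$ odd your argument is identical to the paper's; for $n$ even the paper instead uses the cofibration $S^{2n-4}\vee S^{2n-2}\hookrightarrow\C P^n_{n-3}\to S^{2n}$ (the $(2n-2)$-skeleton inclusion, available since that skeleton splits as a wedge), which yields directly $\pi^{2n-3}(\C P^n_{n-3})\cong\pi_3^S/\langle\tfrac{n-2}{2}\nu,\,\eta^3\rangle$ and so bypasses your intermediate computation of $\pi^{2n-3}(\C P^n_{n-2})\cong\Z/12$---but the two routes encode exactly the same information (your $\Z/12$ is precisely $\pi_3^S/\eta^3$).
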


  \begin{proof}
    As in the proof of Proposition \ref{k=1}, we can see that $\mathrm{Coker}\,h^{2n-3}=0$ and $\mathrm{Ker}\,h^{2n-3}\cong\pi^{2n-3}(\C P^n_{n-3})$. For $n$ odd, it follows from Proposition \ref{cell decomposition} that there is an exact sequence
    \[
      \pi_0^S\xrightarrow{\eta^*+\frac{n+1}{2}\nu^*}\pi_1^S\oplus\pi^S_3\to\pi^{2n-3}(\C P^n_{n-3})\to\pi^S_{-1}.
    \]
    Then $\pi^{2n-3}(\C P^n_{n-3})\cong(\Z/2\{\eta\}\oplus\Z/24\{\nu\})/(\eta+\frac{n+1}{2}\nu)\cong\Z/(24,n+1)$. For $n$ even, it follows from Proposition \ref{cell decomposition} that there is an exact sequence
    \[
      \pi^S_0\oplus\pi^S_2\xrightarrow{\frac{n-2}{2}\nu^*\oplus\eta^*}\pi^S_3\to\pi^{2n-3}(\C P^n_{n-3})\to\pi^S_{-1}\oplus\pi^S_1\xrightarrow{\frac{n-2}{2}\nu^*\oplus\eta^*}\pi^S_2.
    \]
    Then $\pi^{2n-3}(\C P^n_{n-3})\cong(\Z/24\{\nu\})/(\frac{n-2}{2},12)\nu\cong\Z/\frac{(24,n-2)}{2}$, completing the proof.
  \end{proof}

  \begin{prop}
    \label{k=4}
    $\mathrm{Ker}\,h^{2n-4}=0$ and
    \[
      \mathrm{Coker}\,h^{2n-4}\cong
      \begin{cases}
        \Z/\frac{48}{(24,n+1)}&n\text{ odd}\\
        \Z/\frac{24}{(24,n-2)}&n\text{ even}.
      \end{cases}
    \]
  \end{prop}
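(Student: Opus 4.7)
The plan is to follow the strategy of Propositions~\ref{k=2} and~\ref{k=3}. First, the cofiber sequence $\C P^{n-3} \hookrightarrow \C P^n \to \C P^n_{n-3}$ yields an isomorphism $\pi^{2n-4}(\C P^n_{n-3}) \xrightarrow{\cong} \pi^{2n-4}(\C P^n)$: since $\dim \C P^{n-3} = 2n-6$, filtering by skeleta shows that $\pi^{2n-4}(\C P^{n-3})$ and $\pi^{2n-5}(\C P^{n-3})$ are both built from groups of the form $\pi^S_k$ with $k < 0$, and therefore vanish. As in the proof of Proposition~\ref{k=2}, the bottom-cell inclusion $S^{2n-4} \hookrightarrow \C P^n_{n-3}$ then fits into a natural commutative square identifying $h^{2n-4}$ with the induced map $\pi^{2n-4}(\C P^n_{n-3}) \to \pi^{2n-4}(S^{2n-4}) = \pi^S_0 = \Z$, so it suffices to compute the kernel and cokernel of this last map.

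For $n$ odd, I would use Proposition~\ref{cell decomposition} to present $\C P^n_{n-3}$ as the mapping cone of
\[
f = \bigl(\eta,\,\tfrac{n+1}{2}\nu\bigr) \colon S^{2n-3} \vee S^{2n-1} \to S^{2n-4}.
\]
Applying $\pi^{2n-4}$ to the Puppe sequence produces the exact sequence
\[
\pi^S_1 \xrightarrow{(\Sigma f)^*} \pi^S_2 \to \pi^{2n-4}(\C P^n_{n-3}) \to \pi^S_0 \xrightarrow{f^*} \pi^S_1 \oplus \pi^S_3.
\]
Since $\eta\nu = 0$ in $\pi^S_4 = 0$, the map $(\Sigma f)^*$ sends $\eta \mapsto (\eta^2, 0) \neq 0$, so it is an isomorphism $\Z/2 \to \Z/2$. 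This forces $\pi^{2n-4}(\C P^n_{n-3}) \to \pi^S_0$ to be injective with image $\ker(f^*)$. Since $f^*(k\iota) = \bigl(k\eta,\,\tfrac{k(n+1)}{2}\nu\bigr)$, its kernel consists of those $k$ with $2 \mid k$ and $48 \mid k(n+1)$, which simplifies to $\tfrac{48}{(24,n+1)}\Z$, yielding the stated cokernel.

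For $n$ even, I would analogously present $\C P^n_{n-3}$ as the cofiber of
\[
\alpha = \bigl(\tfrac{n-2}{2}\nu,\,\eta\bigr) \colon S^{2n-1} \to S^{2n-4} \vee S^{2n-2}.
\]
The Puppe sequence then gives the exact sequence
\[
0 \to \pi^{2n-4}(\C P^n_{n-3}) \to \pi^S_0 \oplus \pi^S_2 \xrightarrow{\alpha^*} \pi^S_3,
\]
where $\alpha^*(k,l) = \bigl(\tfrac{k(n-2)}{2} + 12 l\bigr)\nu$ (using $\eta^3 = 12\nu$). The main obstacle is computing the image of $\ker(\alpha^*) \cong \Z$ under the projection to the first factor $\pi^S_0 = \Z$. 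This requires splitting according to whether $(48, n-2) \mid 24$ (equivalently, whether $16 \nmid (n-2)$): in that case $\ker(\alpha^*)$ is generated by $\bigl(\tfrac{24}{(48, n-2)}, 1\bigr)$, and otherwise by $\bigl(\tfrac{48}{(48, n-2)}, 0\bigr)$. A short direct computation, separating the two $2$-adic regimes, shows that both sub-cases yield the projected image $\tfrac{24}{(24, n-2)}\Z$, producing kernel $0$ and cokernel $\Z/\tfrac{24}{(24, n-2)}$.
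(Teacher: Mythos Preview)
Your argument is correct and follows essentially the same route as the paper: reduce to the stunted projective space $\C P^n_{n-3}$, use the cell structure from Proposition~\ref{cell decomposition}, and read off the kernel and cokernel from the resulting Puppe exact sequence. The paper writes down exactly the same exact sequences (with $\pi^S_2\oplus\pi^S_4$ in place of your $\pi^S_2$, which is harmless since $\pi^S_4=0$) and then simply says ``by an easy inspection''; you have supplied that inspection. One minor simplification for the $n$ even case: rather than splitting on whether $16\mid(n-2)$ and tracking explicit generators of $\ker(\alpha^*)$, you can observe directly that the image of $\ker(\alpha^*)$ in $\pi^S_0$ is the set of $k$ for which some $l\in\Z/2$ satisfies $24\mid\tfrac{k(n-2)}{2}+12l$, i.e.\ $12\mid\tfrac{k(n-2)}{2}$, i.e.\ $24\mid k(n-2)$, which immediately gives $\tfrac{24}{(24,n-2)}\Z$.
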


  \begin{proof}
    As in the proof of Proposition \ref{k=2}, we can see that the Hurewicz map $h^{2n-4}\colon\pi^{2n-4}(\C P^n)\to H^{2n-4}(\C P^n)$ is identified with the map $\pi^{2n-4}(\C P^n)\to\pi^S_0$ induced from the bottom cell inclusion. For $n$ odd, by Lemma \ref{cell decomposition}, there is an exact sequence
    \[
      \pi^S_1\xrightarrow{\eta^*+\frac{n+1}{2}\nu^*}\pi_2^S\oplus\pi_4^S\to\pi^{2n-4}(\C P^n_{n-3})\to\pi_0^S\xrightarrow{\eta^*+\frac{n+1}{2}\nu^*}\pi_1^S\oplus\pi_3^S.
    \]
    Then by an easy inspection, we can see that the kernel and the cokernel of the map $\pi^{2n-4}(\C P^n_{n-3})\to\pi_0^S$ are as in the statement. For $n$ even, by Lemma \ref{cell decomposition}, there is an exact sequence
    \[
      \pi_4^S\to\pi^{2n-4}(\C P^n_{n-3})\to\pi_0^S\oplus\pi_2^S\xrightarrow{\frac{n-2}{2}\nu^*+\eta^*}\pi_3^S.
    \]
    Thus we can see that the kernel and the cokernel of the map $\pi^{2n-4}(\C P^n_{n-3})\to\pi_0^S$ are as in the statement too, completing the proof.
  \end{proof}

  \begin{prop}
    \label{k=5}
    $\mathrm{Coker}\,h^{2n-5}=0$ and
    \[
      \mathrm{Ker}\,h^{2n-5}\cong
      \begin{cases}
        \Z/(24,n)&n\text{ even}\\
        \Z/\frac{(24,n-3)}{2}&n\text{ odd}.
      \end{cases}
    \]
  \end{prop}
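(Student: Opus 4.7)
Since $2n-5$ is odd, the integral cohomology $H^{2n-5}(\C P^n)$ vanishes, so $\mathrm{Coker}\,h^{2n-5}=0$ and $\mathrm{Ker}\,h^{2n-5}=\pi^{2n-5}(\C P^n)$. Following the strategy of Propositions \ref{k=1} and \ref{k=3} but descending one further cell, the first step is to use the long exact sequence of the cofibration $\C P^{n-4}\hookrightarrow \C P^n\to \C P^n_{n-4}$ together with the vanishing $\pi^{2n-5}(\C P^{n-4})=\pi^{2n-6}(\C P^{n-4})=0$ (which holds because $\dim \C P^{n-4}=2n-8$) to obtain $\pi^{2n-5}(\C P^n)\cong \pi^{2n-5}(\C P^n_{n-4})$.

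Next I would compute $\pi^{2n-5}(\C P^n_{n-3})$ from the cell decomposition in Proposition \ref{cell decomposition}. Collapsing the top cell gives a cofibration $(\text{$2$-skeleton})\hookrightarrow \C P^n_{n-3}\to S^{2n}$, and since $\pi^{2n-5}(S^{2n})=\pi^{2n-4}(S^{2n})=0$ (because $\pi_4^S=\pi_5^S=0$), this identifies $\pi^{2n-5}(\C P^n_{n-3})$ with $\pi^{2n-5}$ of the $2$-skeleton. For $n$ odd, the $2$-skeleton is $S^{2n-4}\cup_{\eta}e^{2n-2}$, and the relations $\eta^2\ne 0$ and $\eta^3=12\nu$ yield $\pi^{2n-5}(\C P^n_{n-3})\cong \Z/12$; for $n$ even the $2$-skeleton is the wedge $S^{2n-4}\vee S^{2n-2}$, giving $\pi^{2n-5}(\C P^n_{n-3})\cong \Z/2\oplus \Z/24$.

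I would then pass from $\C P^n_{n-3}$ to $\C P^n_{n-4}$ via the cofibration $S^{2n-6}\hookrightarrow \C P^n_{n-4}\to \C P^n_{n-3}$. Since $\pi^{2n-5}(S^{2n-6})=0$, this identifies $\pi^{2n-5}(\C P^n_{n-4})$ with the cokernel of the connecting homomorphism $\partial^{*}\colon \pi^{2n-6}(S^{2n-6})=\Z\to \pi^{2n-5}(\C P^n_{n-3})$, which sends $1$ to the class of the Puppe map $\C P^n_{n-3}\to S^{2n-5}$. Restricting this Puppe class cell by cell, one gets the $\eta$-component $(n-3)\eta$ on $S^{2n-4}$, a $\nu$-component $c\nu$ on $e^{2n-2}$, and $0$ on $e^{2n}$ (since $\pi_5^S=0$). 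The coefficient $c$ is the $\nu$-attaching coefficient between the cells of $\C P^n$ in dimensions $2n-6$ and $2n-2$, and would be computed by a James-style comparison with $\H P$ or $Q^k$ (Lemma \ref{HP-Q}), exactly as in the proof of Proposition \ref{cell decomposition}. A parity case analysis of the resulting cokernel then yields $\Z/(24,n)$ for $n$ even and $\Z/\frac{(24,n-3)}{2}$ for $n$ odd.

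The main obstacle is computing the coefficient $c$: Proposition \ref{cell decomposition} only treats the top pair of cells of $\C P^n$, so an extension of that James-style argument is needed for the pair of cells in dimensions $2n-6$ and $2n-2$. In addition, in the even case one must carefully resolve the extension in $\Z/2\oplus \Z/24$ when both the $\eta$- and $\nu$-restrictions of the Puppe class are nontrivial, which is what produces the unified formula $\Z/(24,n)$.
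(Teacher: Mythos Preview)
Your approach is workable but considerably more roundabout than the paper's. You reduce to $\pi^{2n-5}(\C P^n_{n-4})$ and then attack it via the \emph{bottom}-cell cofibration $S^{2n-6}\to\C P^n_{n-4}\to\C P^n_{n-3}$, which forces you to identify the Puppe class and in particular the $\nu$-coefficient $c$. The paper instead kills the \emph{top} cell: the cofibration $\C P^{n-1}_{n-4}\to\C P^n_{n-4}\to S^{2n}$ together with $\pi_4^S=\pi_5^S=0$ gives $\pi^{2n-5}(\C P^n_{n-4})\cong\pi^{2n-5}(\C P^{n-1}_{n-4})$ immediately. But $\C P^{n-1}_{n-4}=\C P^{n-1}_{(n-1)-3}$, so this is exactly $\pi^{2(n-1)-3}(\C P^{n-1}_{(n-1)-3})$, already computed in Proposition~\ref{k=3} with $n$ replaced by $n-1$; the parity swap in the stated answer comes from this shift. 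You actually invoke the same $\pi_4^S=\pi_5^S=0$ trick, but one level up on $\C P^n_{n-3}$, where it buys you much less.

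Incidentally, your ``main obstacle'' is not really an obstacle: the $\nu$-coefficient between the cells of $\C P^n$ in dimensions $2n-6$ and $2n-2$ is precisely what Proposition~\ref{cell decomposition} gives when applied with $n$ replaced by $n-1$ (so $c=n/2$ for $n$ even and $c=(n-3)/2$ for $n$ odd). With that in hand your cokernel computation can be completed, but it still requires identifying the full Puppe class in $\pi^{2n-5}(\C P^{n-1}_{n-3})$ and checking the extension in the even case --- all of which the paper's one-line reduction to Proposition~\ref{k=3} avoids.
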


  \begin{proof}
    As in the proof of Proposition \ref{k=3}, $\mathrm{Coker}\,h^{2n-5}=0$ and $\mathrm{Ker}\,h^{2n-5}\cong\pi^{2n-5}(\C P^n_{n-4})$. Since there is an exact sequence
    \[
      0=\pi^S_5=\pi^{2n-5}(S^{2n})\to\pi^{2n-5}(\C P^n_{n-4})\to\pi^{2n-5}(\C P^{n-1}_{n-4})\to\pi^{2n-5}(S^{2n-1})=\pi^S_4=0,
    \]
    the map $\pi^{2n-5}(\C P^n_{n-4})\to\pi^{2n-5}(\C P^{n-1}_{n-4})$ is an isomorphism. Then by Proposition \ref{k=3}, the proof is done.
  \end{proof}

  \begin{prop}
    \label{k=6}
    $\mathrm{Ker}\,h^{2n-6}=0$ and
    \[
      \mathrm{Coker}\,h^{2n-6}\cong
      \begin{cases}
        \Z/\frac{48}{(24,n)}&n\text{ even}\\
        \Z/\frac{24}{(24,n-3)}&n\text{ odd}.
      \end{cases}
    \]
  \end{prop}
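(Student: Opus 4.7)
The plan is to follow the same reduction strategy used in the proof of \cref{k=5}, but invoking \cref{k=4} in place of \cref{k=3}. First, from the cofibration $\C P^{n-4}\hookrightarrow\C P^n\to\C P^n_{n-4}$, together with the vanishing $\pi^{2n-7}(\C P^{n-4})=\pi^{2n-6}(\C P^{n-4})=0$ (which holds because $\dim\C P^{n-4}=2n-8$), the long exact sequence yields an isomorphism $\pi^{2n-6}(\C P^n_{n-4})\xrightarrow{\cong}\pi^{2n-6}(\C P^n)$. Arguing as in the proof of \cref{k=2}, I would then identify the Hurewicz map $h^{2n-6}$ for $\C P^n$ with the map $\pi^{2n-6}(\C P^n_{n-4})\to\pi^S_0$ induced by the bottom cell inclusion $S^{2n-6}\hookrightarrow\C P^n_{n-4}$.

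Next, the cofibration $\C P^{n-1}_{n-4}\hookrightarrow\C P^n_{n-4}\to S^{2n}$, together with the vanishing $\pi^S_{-5}=\pi^S_{-6}=0$, gives an isomorphism $\pi^{2n-6}(\C P^n_{n-4})\xrightarrow{\cong}\pi^{2n-6}(\C P^{n-1}_{n-4})$; this is the same kind of iso used in the proof of \cref{k=5}. Since the bottom cell $S^{2n-6}$ factors through $\C P^{n-1}_{n-4}$, by naturality this isomorphism transports the bottom cell map of the previous paragraph to the analogous bottom cell map $\pi^{2n-6}(\C P^{n-1}_{n-4})\to\pi^S_0$.

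Finally, I would identify $\C P^{n-1}_{n-4}=\C P^{n-1}_{(n-1)-3}$ and invoke \cref{k=4} applied to $\C P^{n-1}$ at degree $2(n-1)-4=2n-6$. The proof of \cref{k=4} computes precisely the kernel and cokernel of the bottom cell map $\pi^{2(n-1)-4}(\C P^{n-1}_{(n-1)-3})\to\pi^S_0$ via the exact sequences coming from \cref{cell decomposition}; substituting $n-1$ for $n$ in its formulas yields
\[
\mathrm{Ker}=0,\qquad
\mathrm{Coker}\cong
\begin{cases}
\Z/\frac{48}{(24,n)} & n\text{ even (i.e., }n-1\text{ odd)},\\
\Z/\frac{24}{(24,n-3)} & n\text{ odd (i.e., }n-1\text{ even)}.
\end{cases}
\]
The switch of the even and odd cases relative to \cref{k=4} is exactly the opposite parity of $n$ and $n-1$.

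There is no real obstacle here; the argument is a routine reduction completely parallel to how \cref{k=5} was deduced from \cref{k=3}. The only point to be careful about is that the Hurewicz map is correctly transported through the two isomorphisms above, but this is formal naturality: in each step the bottom cell inclusion $S^{2n-6}$ factors compatibly through the map in question.
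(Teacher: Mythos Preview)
Your reduction to $\C P^{n-1}_{n-4}$ and then to \cref{k=4} is the right idea, and the first paragraph (passing from $\C P^n$ to $\C P^n_{n-4}$) is fine. The gap is in the second step. In the cofibration $\C P^{n-1}_{n-4}\hookrightarrow\C P^n_{n-4}\to S^{2n}$ the long exact sequence at degree $2n-6$ reads
\[
\pi^{2n-7}(\C P^{n-1}_{n-4})\to\pi^{2n-6}(S^{2n})\to\pi^{2n-6}(\C P^n_{n-4})\to\pi^{2n-6}(\C P^{n-1}_{n-4})\to\pi^{2n-5}(S^{2n}),
\]
and since $\pi^{i}(S^{2n})=\{S^{2n},S^{i}\}=\pi^S_{2n-i}$ these outer terms are $\pi^S_6$ and $\pi^S_5$, not $\pi^S_{-6}$ and $\pi^S_{-5}$. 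While $\pi^S_5=0$ gives surjectivity on the right, $\pi^S_6=\Z/2\{\nu^2\}$ is \emph{not} zero, so you cannot conclude the isomorphism $\pi^{2n-6}(\C P^n_{n-4})\cong\pi^{2n-6}(\C P^{n-1}_{n-4})$ by vanishing alone. This is precisely the point where the \cref{k=5} argument does not carry over verbatim: there the analogous groups were $\pi^S_5$ and $\pi^S_4$, both zero.

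The paper fills this gap by showing that the connecting map $\pi^{2n-7}(\C P^{n-1}_{n-4})\to\pi^S_6$ is surjective, separately for the two parities of $n$. For $n$ odd one uses \cref{cell decomposition} and the relation $\eta\nu=0$ to build an extension $\bar\nu\colon S^{2n-4}\cup_\eta e^{2n-2}\to S^{2n-7}$ of $\nu$, so that the composite $S^{2n-1}\xrightarrow{\nu}S^{2n-4}\hookrightarrow\C P^{n-1}_{n-4}\to S^{2n-4}\cup_\eta e^{2n-2}\xrightarrow{\bar\nu}S^{2n-7}$ hits $\nu^2$; for $n$ even one produces a class in $\pi_{2n-1}(\C P^{n-1}_{n-4})$ mapping to $\nu\in\pi^S_3$ under the pinch to $S^{2n-4}$, again detecting $\nu^2$. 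Only after this does the reduction to \cref{k=4} go through. Your proposal needs this extra argument; as written it asserts an isomorphism that requires justification.
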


  \begin{proof}
    As in the proof of Proposition \ref{k=2}, the Hurewicz map $h^{2n-6}\colon\pi^{2n-6}(\C P^n)\to H^{2n-6}(\C P^n)$ is identified with the map $\pi^{2n-6}(\C P^n_{n-4})\to\pi_0^S$ induced from the bottom cell inclusion. Consider an exact sequence
    \[
      \pi^{2n-7}(\C P^{n-1}_{n-4})\to\pi_6^S\to\pi^{2n-6}(\C P^n_{n-4})\to\pi^{2n-6}(\C P^{n-1}_{n-4})\to\pi^S_5=0.
    \]
    Since $\eta\nu=0$, there is a map $\bar{\nu}\colon S^{2n-4}\cup_\eta e^{2n-2}\to S^{2n-7}$ which restricts to $\nu$ on the bottom cell. For $n$ odd, by Proposition \ref{cell decomposition}, we can consider the composition of maps
    \[
      S^{2n-1}\xrightarrow{\nu}S^{2n-4}\to\C P^{n-1}_{n-4}\to S^{2n-4}\cup_\eta e^{2n-2}\xrightarrow{\bar{\nu}}S^{2n-7}
    \]
    which coincides with $\nu^*$. Then the map $\pi^{2n-6}(\C P^n_{n-4})\to\pi^{2n-6}(\C P^{n-1}_{n-4})$ is an isomorphism. For $n$ even, by Proposition \ref{cell decomposition}, there is an exact sequence
    \[
      0=\pi_5^S\to\pi_{2n-1}(\C P^{n-1}_{n-4})\to\pi_3^S\oplus\pi_1^S\to\pi_4^S=0,
    \]
    implying that there is a map $S^{2n-1}\to\C P^{n-1}_{n-4}$ such that the composition $S^{2n-1}\to\C P^{n-1}_{n-4}\to S^{2n-4}$ coincides with $\nu$. Then we obtain that the map $\pi^{2n-6}(\C P^n_{n-4})\to\pi^{2n-6}(\C P^{n-1}_{n-4})$ is an isomorphism, so there is a commutative diagram
    \[
      \xymatrix{
        \pi^{2n-6}(\C P^n_{n-4})\ar[r]^\cong\ar[d]&\pi^{2n-6}(\C P^{n-1}_{n-4})\ar[d]\\
        \pi_0^S\ar@{=}[r]&\pi_0^S.
      }
    \]
    Thus the proof is done by Proposition \ref{k=4}.
  \end{proof}


  \subsection{Naturality}\label{naturality}

  We consider the map $\pi^{2n-i}(\C P^n)\to\pi^{2n-i}(\C P^{n-j})$ induced from the inclusion $\C P^{n-j}\to\C P^n$ for $i=5,6$ and $j=1,2$.

  \begin{prop}
    \label{naturality k=5,6}
    The map $\pi^{2n-i}(\C P^n)\to\pi^{2n-i}(\C P^{n-1})$ is trivial for $i=3$, and an isomorphism for $i=5,6$.
  \end{prop}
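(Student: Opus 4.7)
The plan is to reduce each case of the proposition to the corresponding statement about the stunted projective space $\C P^n_m$ (whose cell structure is recorded in Lemma~\ref{CP} and Proposition~\ref{cell decomposition}) and then to analyze the reduced map by means of the long exact sequence of the cofibration $\C P^{n-1}_m \hookrightarrow \C P^n_m \to S^{2n}$ that collapses the $(2n-2)$-skeleton. Specifically, I would take $m = n-3$ for $i = 3$ and $m = n-4$ for $i = 5,6$, and use the cofibration $\C P^m \to \C P^n \to \C P^n_m$ together with the vanishing $\pi^{2n-i}(\C P^m) = \pi^{2n-i-1}(\C P^m) = 0$ (which follows inductively from the cofibrations $\C P^{j-1} \to \C P^j \to S^{2j}$ and $\pi^S_k = 0$ for $k<0$) to show that the quotient-induced map $\pi^{2n-i}(\C P^n_m) \to \pi^{2n-i}(\C P^n)$ is an isomorphism, and likewise for $\C P^{n-1}$. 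The naturality square then reduces the proposition to analyzing the inclusion-induced map $\pi^{2n-i}(\C P^n_m) \to \pi^{2n-i}(\C P^{n-1}_m)$.

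For $i = 5, 6$ I would apply the long exact sequence
\[
\pi^{2n-i}(S^{2n}) \to \pi^{2n-i}(\C P^n_{n-4}) \to \pi^{2n-i}(\C P^{n-1}_{n-4}) \to \pi^{2n-i+1}(S^{2n}).
\]
When $i = 5$, both $\pi^S_5$ and $\pi^S_4$ vanish, giving the isomorphism immediately. When $i = 6$, the vanishing of $\pi^S_5$ gives surjectivity; injectivity is equivalent to the triviality of the map $\pi^S_6 \to \pi^{2n-6}(\C P^n_{n-4})$ induced by the quotient $\C P^n_{n-4} \to S^{2n}$, which is precisely the computation carried out in the proof of Proposition~\ref{k=6} using $\eta\nu = 0$ to factor the relevant class through the secondary map $\bar\nu \colon S^{2n-4} \cup_\eta e^{2n-2} \to S^{2n-7}$.

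For $i = 3$ I would split on the parity of $n$. When $n$ is odd, Lemma~\ref{CP} gives $\C P^{n-1}_{n-3} = S^{2n-4} \cup_\eta e^{2n-2}$, whose long exact sequence together with the surjectivity of $\eta^* \colon \pi^S_0 \to \pi^S_1$ forces $\pi^{2n-3}(\C P^{n-1}_{n-3}) = 0$, making the map trivial. When $n$ is even, Lemma~\ref{CP} gives $\C P^{n-1}_{n-3} = S^{2n-4} \vee S^{2n-2}$ and hence $\pi^{2n-3}(\C P^{n-1}_{n-3}) = \pi^S_1 = \Z/2$, while Proposition~\ref{cell decomposition} identifies the attaching map of the top cell of $\C P^n_{n-3}$ as $\phi = \tfrac{n-2}{2}\nu + \eta \in \pi^S_3 \oplus \pi^S_1$. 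I would then compute the connecting homomorphism
\[
\partial \colon \pi^{2n-3}(\C P^{n-1}_{n-3}) = \Z/2 \to \pi^{2n-2}(S^{2n}) = \pi^S_2 = \Z/2
\]
as pre-composition with $\phi$ via Puppe duality: the generator $\eta \in \pi^S_1$ on the $S^{2n-2}$ summand composes with the $\eta$-component of $\phi$ to give $\eta^2 \neq 0$, so $\partial$ is an isomorphism, and exactness forces the image of $\pi^{2n-3}(\C P^n_{n-3}) \to \pi^{2n-3}(\C P^{n-1}_{n-3})$ to be $\ker \partial = 0$.

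The main obstacle will be the explicit identification of $\partial$ in the $n$-even subcase of $i = 3$, which requires a careful unwinding of the Puppe sequence and of how the splitting of $\phi$ into $\nu$- and $\eta$-components interacts with a given cohomotopy class, together with the input $\eta^2 \neq 0 \in \pi^S_2$; the $i = 5,6$ cases are then essentially immediate consequences of $\pi^S_4 = \pi^S_5 = 0$ and the cell-level calculation already present in the proof of Proposition~\ref{k=6}.
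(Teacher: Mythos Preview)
Your proposal is correct and follows essentially the same strategy as the paper. For $i=3$ you split on parity and in the nontrivial case identify the connecting map in the Puppe sequence with $\eta^*\colon\pi^S_1\to\pi^S_2$, exactly as the paper does (your parity labeling is in fact the right one; the paper's phrase ``when $n$ is odd'' appears to be a slip); for $i=5$ you use $\pi^S_4=\pi^S_5=0$ just as in the proof of Proposition~\ref{k=5}; and for $i=6$ you invoke the surjectivity of $\pi^{2n-7}(\C P^{n-1}_{n-4})\to\pi^S_6$ already established inside the proof of Proposition~\ref{k=6}, which is slightly more direct than the paper's Hurewicz-diagram argument but uses the same underlying computation.
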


  \begin{proof}
  To prove the case $i=3$,  we only need to show this map is trivial when $n$ is odd  by Proposition \ref{k=1}. 
 There is an exact sequence
    \[
      \pi^{2n-3}(\C P^{n})\to\pi^{2n-3}(\C P^{n-1})\to\pi^S_2
    \]
    such that as in the proof of Proposition \ref{k=1}, the last map is identified with the map
    \[
      \eta^*\colon\pi^S_1\to\pi^S_2
    \]
    which is non-trivial, so an isomorphism.
    The case $i=5$ is proved in the proof of Proposition \ref{k=5}. By Propositions \ref{k=4} and \ref{k=6}, there is a commutative diagram
    \[
      \xymatrix{
        0\ar[r]&\pi^{2n-6}(\C P^n)\ar[r]\ar[d]&H^{2n-6}(\C P^n)\ar[d]^\cong\ar[r]&A\ar[r]\ar[d]&0\\
        0\ar[r]&\pi^{2n-6}(\C P^{n-1})\ar[r]&H^{2n-6}(\C P^{n-1})\ar[r]&A\ar[r]&0
      }
    \]
    for some finite cyclic group $A$.
 Then the map $A\to A$ in the diagram turns out to be an isomorphism, and so the map $\pi^{2n-6}(\C P^n)\to\pi^{2n-6}(\C P^{n-1})$ is an isomorphism too, completing the proof.
  \end{proof}

  \begin{prop}
    The map $\pi^{2n-5}(\C P^n)\to\pi^{2n-5}(\C P^{n-2})$ is trivial.
  \end{prop}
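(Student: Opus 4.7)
The plan is to factor the map through the intermediate projective space $\C P^{n-1}$ and reduce everything to cases already handled in \cref{naturality k=5,6}. The inclusion $\C P^{n-2}\hookrightarrow\C P^n$ factors as $\C P^{n-2}\hookrightarrow\C P^{n-1}\hookrightarrow\C P^n$, which by functoriality of stable cohomotopy gives a commutative triangle, so the map in question is the composite
\[
\pi^{2n-5}(\C P^n)\longrightarrow\pi^{2n-5}(\C P^{n-1})\longrightarrow\pi^{2n-5}(\C P^{n-2}).
\]

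The first arrow is an isomorphism: this is precisely the $i=5$ assertion of \cref{naturality k=5,6}, already established in the proof of \cref{k=5} via the vanishing of $\pi^S_4$ and $\pi^S_5$ in the relevant cofiber sequence for $\C P^n_{n-4}\to\C P^{n-1}_{n-4}$.

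For the second arrow, I would apply the $i=3$ case of \cref{naturality k=5,6}, which states that $\pi^{2m-3}(\C P^m)\to\pi^{2m-3}(\C P^{m-1})$ is trivial. Setting $m=n-1$, one has $2m-3=2n-5$, so the map $\pi^{2n-5}(\C P^{n-1})\to\pi^{2n-5}(\C P^{n-2})$ is trivial. The composite is therefore trivial, which is the desired conclusion.

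There is essentially no obstacle here: the result is a direct corollary of the two naturality statements already proved. The only thing to verify is the reindexing $2n-5=2(n-1)-3$, ensuring that the $i=3$ case of the preceding proposition applies verbatim to the second map in the factorization.
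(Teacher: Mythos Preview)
Your proof is correct and follows exactly the paper's approach: factor through $\C P^{n-1}$ and invoke the $i=3$ case of \cref{naturality k=5,6} (with the reindexing $2n-5=2(n-1)-3$) to see that the second map vanishes. Your observation that the first arrow is an isomorphism is true but unnecessary, since triviality of the second factor alone already kills the composite.
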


  \begin{proof}
  The map is the composition
  \[
  \pi^{2n-5}(\C P^n)\to\pi^{2n-5}(\C P^{n-1})\to\pi^{2n-5}(\C P^{n-2}).
  \]
  The latter map is trivial by Proposition \ref{naturality k=5,6}.
  \end{proof}

  \begin{prop}
    The map $\pi^{2n-6}(\C P^n)\to\pi^{2n-6}(\C P^{n-2})$ is identified with the map
    \[
      \Z\to\Z,\quad x\mapsto \frac{24}{(24,n)}x
    \]
    for $n$ even and
    \[
      \Z\to\Z\oplus\Z/2,\quad x\mapsto\left(\frac{24}{(24,n-3)}x,0\right)
    \]
    for $n$ odd.
  \end{prop}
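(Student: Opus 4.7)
The plan is to compute this map by combining Hurewicz-naturality (to pin down the free part) with an explicit analysis of the attaching maps in the stunted complexes (to pin down any torsion contribution), following the same philosophy as Propositions~\ref{k=2}--\ref{k=6}.

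First, I would identify source and target. By Proposition~\ref{k=6}, $\pi^{2n-6}(\C P^n) \cong \Z$ with Hurewicz image the subgroup of index $\frac{48}{(24,n)}$ ($n$ even) or $\frac{24}{(24,n-3)}$ ($n$ odd). By Proposition~\ref{k=2} (with $n$ replaced by $n-2$), $\pi^{2n-6}(\C P^{n-2}) \cong \Z$ with Hurewicz $\times 2$ when $n$ is even, and $\cong \Z \oplus \Z/2$ with Hurewicz being projection to the $\Z$-factor when $n$ is odd. Since $2n-6 \le 2(n-2)$, the restriction $H^{2n-6}(\C P^n)\to H^{2n-6}(\C P^{n-2})$ is an isomorphism, so naturality of Hurewicz applied to the commutative square
\[
\begin{CD}
\pi^{2n-6}(\C P^n) @>>> \pi^{2n-6}(\C P^{n-2}) \\
@VhVV @VVhV \\
H^{2n-6}(\C P^n) @>\cong>> H^{2n-6}(\C P^{n-2})
\end{CD}
\]
forces the $\Z$-component of the induced map to equal the displayed multiplicative constant. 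This finishes the $n$ even case.

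For $n$ odd, it remains to show the $\Z/2$-component vanishes. My plan is to factor the map through $\pi^{2n-6}(\C P^{n-1})$ using Proposition~\ref{naturality k=5,6}, which asserts that $\pi^{2n-6}(\C P^n) \to \pi^{2n-6}(\C P^{n-1})$ is an isomorphism, and then analyze $\pi^{2n-6}(\C P^{n-1}) \to \pi^{2n-6}(\C P^{n-2})$ via the cofibration $\C P^{n-2} \to \C P^{n-1} \to S^{2n-2}$. Because $\pi^{2n-6}(S^{2n-2}) = \pi^S_4 = 0$, the resulting long exact sequence collapses to
\[
0 \to \pi^{2n-6}(\C P^{n-1}) \to \pi^{2n-6}(\C P^{n-2}) \xrightarrow{\delta} \pi^{2n-5}(S^{2n-2}) = \pi^S_3 \cong \Z/24,
\]
so the image coincides with $\ker\delta$.

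The key computational step is evaluating $\delta$. Since $(n-3)\eta = 0$ stably for $n$ odd, the two-cell subcomplex $\C P^{n-2}/\C P^{n-4}$ splits as $S^{2n-6} \vee S^{2n-4}$, giving the identification $\pi^{2n-6}(\C P^{n-2}) \cong \pi^S_0 \oplus \pi^S_2 = \Z \oplus \Z/2$. Proposition~\ref{cell decomposition} describes the top-cell attaching map of $\C P^{n-1}$ (with $n-1$ even) as $\frac{n-3}{2}\nu + \eta$, so $\delta(a,c) = \frac{n-3}{2}a\cdot\nu + c\cdot\eta^3 = \bigl(\tfrac{n-3}{2}a + 12c\bigr)\nu$ in $\pi^S_3 = \Z/24\{\nu\}$ (using $\eta^3=12\nu$). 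Computing $\ker\delta$ and choosing the splitting of $\pi^{2n-6}(\C P^{n-2})$ so that the generator lies in the $\Z$-summand then yields the stated form. The main obstacle is precisely this final identification: it requires a number-theoretic analysis of $\ker\delta$ as a cyclic subgroup of $\Z\oplus\Z/2$ and comparison of its generator's Hurewicz value with $\frac{24}{(24,n-3)}$, showing that the cokernel decomposes as $\Z/\frac{24}{(24,n-3)}\oplus\Z/2$ rather than as a single cyclic group.
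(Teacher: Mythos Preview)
Your Hurewicz-naturality square is precisely the paper's entire argument: the proof in the paper is the single line ``The proof is quite the same as that of Proposition~\ref{naturality k=5,6} for $i=6$,'' meaning one compares the cokernels of $h^{2n-6}$ on $\C P^n$ (Proposition~\ref{k=6}) and on $\C P^{n-2}$ (Proposition~\ref{k=2}) through the commuting square with $H^{2n-6}$. For $n$ even that square pins down the map completely as multiplication by $\frac{48/(24,n)}{2}=\frac{24}{(24,n)}$, exactly as you say. For $n$ odd the same square gives the $\Z$-component $\frac{24}{(24,n-3)}$, and the paper stops there.

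You go further than the paper by recognising that for $n$ odd the Hurewicz square alone does not see the $\Z/2$ factor, and by proposing to compute the image as $\ker\delta$ in the long exact sequence of $\C P^{n-2}\to\C P^{n-1}\to S^{2n-2}$. This is sound, and your formula $\delta(a,c)=(\tfrac{n-3}{2}a+12c)\nu$ is correct for the splitting coming from Proposition~\ref{cell decomposition}. Be aware, though, that carrying this through does not in general give $\langle(m,0)\rangle$: for instance when $n=5$ one finds $\ker\delta=\langle(12,1)\rangle$, and since $12$ is even no automorphism of $\Z\oplus\Z/2$ carries this to $\langle(12,0)\rangle$. So the obstacle you flag is real: the $\Z/2$-entry in the displayed formula depends on the (non-canonical) choice of wedge splitting of $\C P^{n-2}_2$, and the paper's one-line proof does not resolve it either. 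In short, your free-part argument matches the paper's and is complete; your extra analysis of the torsion component is more than the paper attempts, and it exposes a genuine ambiguity in the statement rather than a gap in your method.
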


  \begin{proof}
    The proof is quite the same as that of Proposition \ref{naturality k=5,6} for $i=6$.
  \end{proof}

\begin{bibdiv}
 \begin{biblist}
 
\bib{APS1}{article}{
 Author = {Michael F. {Atiyah}},
 Author = {V. K. {Patodi}},
 Author = {I. M. {Singer}},
 Title = {{Spectral asymmetry and Riemannian geometry. I}},
 Journal = {{Math. Proc. Camb. Philos. Soc.}},
 ISSN = {0305-0041; 1469-8064/e},
 Volume = {77},
 Pages = {43--69},
 Year = {1975},
 Publisher = {Cambridge University Press, Cambridge},
}

\bib{BF2}{article}{
 Author = {Stefan {Bauer}},
 Title = {{A stable cohomotopy refinement of Seiberg--Witten invariants. II}},
 Journal = {{Invent. Math.}},
 ISSN = {0020-9910; 1432-1297/e},
 Volume = {155},
 Number = {1},
 Pages = {21--40},
 Year = {2004},
 Publisher = {Springer, Berlin/Heidelberg},
}

\bib{B_survey}{article}{
 Author = {Stefan {Bauer}},
 Title = {{Refined Seiberg--Witten invariants}},
 BookTitle = {{Different faces of geometry}},
 ISBN = {0-306-48657-1/hbk},
 Pages = {1--46},
 Year = {2004},
 Publisher = {New York, NY: Kluwer Academic/Plenum Publishers},
}

\bib{BF}{article}{
 Author = {Stefan {Bauer} and Mikio {Furuta}},
 Title = {{A stable cohomotopy refinement of Seiberg--Witten invariants. I}},
 Journal = {{Invent. Math.}},
 ISSN = {0020-9910; 1432-1297/e},
 Volume = {155},
 Number = {1},
 Pages = {1--19},
 Year = {2004},
 Publisher = {Springer, Berlin/Heidelberg},
}

\bib{FSimmersed}{article}{
 Author = {Ronald {Fintushel} and Ronald J. {Stern}},
 Title = {{Immersed spheres in 4-manifolds and the immersed Thom conjecture}},
 Journal = {{Turk. J. Math.}},
 ISSN = {1300-0098; 1303-6149/e},
 Volume = {19},
 Number = {2},
 Pages = {145--157},
 Year = {1995},
 Publisher = {Scientific and Technological Research Council of Turkey (T\"UB\.ITAK), Ankara},
}

\bib{FSrb}{article}{
 Author = {Ronald {Fintushel} and Ronald J. {Stern}},
 Title = {{Rational blowdowns of smooth 4-manifolds}},
 Journal = {{J. Differ. Geom.}},
 ISSN = {0022-040X; 1945-743X/e},
 Volume = {46},
 Number = {2},
 Pages = {181--235},
 Year = {1997},
 Publisher = {International Press of Boston, Somerville, MA},
}

\bib{Froyshov}{article}{
  Author = {Kim A. {Fr{\o}yshov}},
 Title = {{Compactness and gluing theory for monopoles}},
 Journal = {{Geom. Topol. Monogr.}},
 ISSN = {1464-8989; 1464-8997/e},
 Volume = {15},
 Pages = {viii + 198},
 Year = {2008},
 Publisher = {Coventry: Geometry \& Topology Publications},
}

\bib{F}{article}{
 Author = {M. {Furuta}},
 Title = {{Monopole equation and the \(\frac{11}{8}\)-conjecture}},
 Journal = {{Math. Res. Lett.}},
 ISSN = {1073-2780; 1945-001X/e},
 Volume = {8},
 Number = {3},
 Pages = {279--291},
 Year = {2001},
 Publisher = {International Press of Boston, Somerville, MA},
}

\bib{GS}{book}{
author = {R. E. {Gompf} and A. I. {Stipsicz}},
title = {4-manifolds and {Kirby} calculus},
series = {Grad. Stud. Math.},
issn = {1065-7339},
volume = {20},
isbn = {0-8218-0994-6},
year = {1999},
publisher = {Providence, RI: American Mathematical Society},
}

\bib{IS}{article}{
 Author = {Masashi {Ishida} and Hirofumi {Sasahira}},
 Title = {{Stable cohomotopy Seiberg--Witten invariants of connected sums of four-manifolds with positive first Betti number. I: Non-vanishing theorem}},
 Journal = {{Int. J. Math.}},
 ISSN = {0129-167X; 1793-6519/e},
 Volume = {26},
 Number = {6},
 Pages = {23},
 Note = {Id/No 1541004},
 Year = {2015},
 Publisher = {World Scientific, Singapore},
}

\bib{J}{article}{
 Author = {I. M. {James}},
 Title = {{Spaces associated with Stiefel manifolds}},
 Journal = {{Proc. Lond. Math. Soc. (3)}},
 ISSN = {0024-6115},
 Volume = {9},
 Pages = {115--140},
 Year = {1959},
 Publisher = {John Wiley \& Sons, Chichester; London Mathematical Society, London},
 DOI = {10.1112/plms/s3-9.1.115},
}

\bib{Tirasan}{article}{
 author = {T.~Khandhawit},
 title = {A new gauge slice for the relative {Bauer}-{Furuta} invariants},
 journal = {Geom. Topol.},
 issn = {1465-3060},
 volume = {19},
 number = {3},
 pages = {1631--1655},
 year = {2015},
 doi = {10.2140/gt.2015.19.1631},
}

\bib{KLS}{article}{
   author={T.~Khandhawit},
   author={J.~Lin},
   author={H.~Sasahira},
   title={Unfolded Seiberg-Witten Floer spectra, II: Relative invariants and the gluing theorem},
 journal = {J. Differ. Geom.},
 issn = {0022-040X},
 volume = {124},
 number = {2},
 pages = {231--316},
 year = {2023},
 doi = {10.4310/jdg/1686931602},
}

\bib{KNY}{article}{
   author={T.\ Kato},
   author={N.\ Nakamura},
   author={K.\ Yasui}, 
   title={The simple type conjecture for mod 2 Seiberg--Witten invariants},
 journal = {J. Eur. Math. Soc. },
 volume = {25},
 number = {12},
 pages = {4869--4877},
 year = {2023},
 doi = {10.4171/JEMS/1297},
}

\bib{KiMe}{article}{
author = {R. {Kirby} and P. {Melvin}},
title = {The {{\(E_8\)}}-manifold, singular fibers and handlebody decompositions},
booktitle = {Proceedings of the Kirbyfest, Berkeley, CA, USA, June 22--26, 1998},
pages = {233--258},
year = {1999},
publisher = {Warwick: University of Warwick, Institute of Mathematics}
}

\bib{KMT}{article}{
author={H.~Konno}, 
author={J.~Miyazawa},
author={M.~Taniguchi},
title={Involutions, knots, and Floer K-theory},
journal={preprint, arXiv:2110.09258},
}

\bib{Ko04}{article}{
author = {D. {Kotschick}},
title = {Monopole classes and {Einstein} metrics},
journal = {Int. Math. Res. Not.},
issn = {1073-7928},
volume = {2004},
number = {12},
pages = {593--609},
year = {2004},
}

\bib{Kr99}{article}{
Author = {P. B. {Kronheimer}},
Title = {{Minimal genus in {{\({S}^1{{\times}} {M}^3\)}}}},
Journal = {Invent. Math.},
Issn = {0020-9910},
Volume = {135},
Number = {1},
Pages = {45--61},
Year = {1999},
}


\bib{Lee}{book}{
 Author = {Dan A. {Lee}},
 Title = {{Geometric relativity}},
 Journal = {{Grad. Stud. Math.}},
 ISSN = {1065-7338},
 ISBN = {978-1-4704-5081-6/hbk; 978-1-4704-5405-0/ebook},
 Pages = {xii + 360},
 Year = {2019},
 Publisher = {Providence, RI: American Mathematical Society (AMS)},
}

\bib{ManolescuSWF}{article}{
 Author = {Ciprian {Manolescu}},
 Title = {{Seiberg--Witten Floer stable homotopy type of three-manifolds with \(b_1=0\)}},
 Journal = {{Geom. Topol.}},
 ISSN = {1465-3060; 1364-0380/e},
 Volume = {7},
 Pages = {889--932},
 Year = {2003},
 Publisher = {Mathematical Sciences Publishers (MSP), Berkeley, CA; Geometry \& Topology Publications c/o University of Warwick, Mathematics Institute, Coventry},
}

\bib{Manolescu-gluing}{article}{
 Author = {Ciprian {Manolescu}},
 Title = {{A gluing theorem for the relative Bauer--Furuta invariants}},
 Journal = {{J. Differ. Geom.}},
 ISSN = {0022-040X; 1945-743X/e},
 Volume = {76},
 Number = {1},
 Pages = {117--153},
 Year = {2007},
 Publisher = {International Press of Boston, Somerville, MA},
}

\bib{Nicolaescu}{book}{
 author = {L.~I.~Nicolaescu},
 title = {Notes on Seiberg--Witten theory},
 series = {Grad. Stud. Math.},
 issn = {1065-7339},
 volume = {28},
 isbn = {0-8218-2145-8},
 year = {2000},
 publisher = {Providence, RI: American Mathematical Society},
}

\bib{Sasahira}{article}{
 Author = {H.~Sasahira},
Title ={Gluing formula for the stable cohomotopy version of Seiberg-Witten invariants along 3-manifolds with $b_1>0$},
Journal={preprint, arXiv:1408.2623}
}

\bib{Taubes0}{article}{
 Author = {Clifford Henry {Taubes}},
 Title = {{The Seiberg--Witten invariants and symplectic forms}},
 Journal = {{Math. Res. Lett.}},
 ISSN = {1073-2780; 1945-001X/e},
 Volume = {1},
 Number = {6},
 Pages = {809--822},
 Year = {1994},
 Publisher = {International Press of Boston, Somerville, MA},
}

\bib{Taubes}{article}{
 Author = {Clifford H. {Taubes}},
 Title = {{SW \(\Rightarrow\) Gr: From the Seiberg--Witten equations to pseudo-holomorphic curves}},
 Journal = {{J. Am. Math. Soc.}},
 ISSN = {0894-0347; 1088-6834/e},
 Volume = {9},
 Number = {3},
 Pages = {845--918},
 Year = {1996},
 Publisher = {American Mathematical Society (AMS), Providence, RI},
}

\bib{T}{book}{
 Author = {Hirosi {Toda}},
 Title = {{Composition methods in homotopy groups of spheres}},
 Journal = {{Ann. Math. Stud.}},
 Volume = {49},
 Year = {1962},
 Publisher = {Princeton University Press, Princeton, NJ},
 DOI = {10.1515/9781400882625},
}

\bib{Witten}{article}{
 author = {Witten, Edward},
 title = {Monopoles and four-manifolds},
 journal = {Math. Res. Lett.},
 volume = {1},
 number = {6},
 pages = {769--796},
 year = {1994},
}

\bib{Yasui_AGT}{article}{
 Author = {Kouichi {Yasui}},
 Title = {{Exotic rational elliptic surfaces without 1-handles}},
 Journal = {{Algebr. Geom. Topol.}},
 ISSN = {1472-2747; 1472-2739/e},
 Volume = {8},
 Number = {2},
 Pages = {971--996},
 Year = {2008},
 Publisher = {Mathematical Sciences Publishers (MSP), Berkeley, CA; Geometry \& Topology Publications c/o University of Warwick, Mathematics Institute, Coventry},
}

\end{biblist}
\end{bibdiv} 

\end{document}